\documentclass[onefignum,onetabnum]{siamart171218}


\usepackage{amsfonts}
\usepackage{graphicx}
\usepackage{epstopdf}
\usepackage{algpseudocode}
\usepackage{subcaption} 

\usepackage{amssymb}
\usepackage{bm}
\ifpdf
  \DeclareGraphicsExtensions{.eps,.pdf,.png,.jpg}
\else
  \DeclareGraphicsExtensions{.eps}
\fi


\newsiamremark{remark}{Remark}
\newsiamremark{hypothesis}{Hypothesis}
\crefname{hypothesis}{Hypothesis}{Hypotheses}
\newsiamthm{claim}{Claim}

\headers{Equilibrated Averaging Residual Method}{Cuiyu He}

\title{Equilibrated Averaging Residual Method: A General Approach to Conservative Flux Recovery}

\author{Cuiyu He\thanks{Department of Mathematics, University of Georgia, Athens, GA
  (\email{cuiyu.he@uga.edu}).}}

\usepackage{amsopn}



\renewcommand{\theequation}{\thesection.\arabic{equation}}
\def\@eqnnum{{\reset@font\rm (\theequation)}}

\def\abstract{
\advance \rightskip by 10mm
\advance \leftskip by 10mm
\vspace{-0.8em}
\noindent
\small{\bf Abstract.}
}

\def\XXint#1#2#3{{\setbox0=\hbox{$#1{#2#3}{\int}$}
\vcenter{\hbox{$#2#3$}}\kern-.5\wd0}}

\def\a{\alpha}

\renewcommand\o{\omega}\renewcommand\O{\Omega}
\def\S{\Sigma}

\newcommand{\bsigma}{\mbox{\boldmath$\sigma$}}
\newcommand{\btau}{\mbox{\boldmath$\tau$}}

\newcommand{\bftau}{\boldsymbol {\tau}}



\def\be{{\bf e}}

\def\bn{{\bf n}}

\def\bs{{\bf s}}

\newcommand{\bx}{\mbox{\boldmath$x$}}

\def\cE{{\cal E}}

\def\cT{{\cal T}}

\def\sD{{_D}}





\def\f12{\frac12}
\def\dfrac{\displaystyle\frac}

\def\p{\partial}

\newcommand{\gradt}{\nabla\cdot}

\def\divvr{{\rm div}}

\def\curll{{\rm curl}}

\newcommand{\tri}{|\!|\!|}

\newcommand{\bdm}{\begin{displaymath}}
\newcommand{\edm}{\end{displaymath}}
\newcommand{\beq}{\begin{equation}}
\newcommand{\eeq}{\end{equation}}
\newcommand{\beqa}{\begin{eqnarray}}
\newcommand{\eeqa}{\end{eqnarray}}
\newcommand{\beqas}{\begin{eqnarray*}}
\newcommand{\eeqas}{\end{eqnarray*}}



\ifpdf
\hypersetup{
  pdftitle={Equilibrated Averaging Residual Method: A General Approach to Conservative Flux Recovery},
  pdfauthor={C. He}
}
\fi


%
%
\def\cN{{\cal N}}
\newcommand{\jump}[1]{[\![ #1]\!]}

\begin{document}

\maketitle

\begin{abstract}
Many equilibrated flux recovery methods for finite element solutions rely on ad hoc or method-specific techniques, limiting their generalizability and efficiency. In this work, we introduce the Equilibrated Averaging Residual Method (EARM), a unified framework for flux recovery that not only reproduces state-of-the-art locally conservative fluxes but also enables the derivation of new equilibrated fluxes with improved properties. In this paper, EARM is applied to conforming, nonconforming, and discontinuous Galerkin methods, ensuring local conservation and robust a posteriori error estimation. 
Despite the unified nature of the variational problem, the framework retains the flexibility to fully leverage the inherent properties of finite element spaces.
Moreover, EARM offers explicit and computationally efficient flux reconstructions for all methods in two dimensions. In three dimensions, only simple local problems need to be solved for the conforming finite element methods.

\end{abstract}

\begin{keywords}
 Adaptive Finite Element Method; equilibrated flux recovery; a posteriori error estimation.
\end{keywords}

\begin{AMS}
65N30 65N50 
\end{AMS}


\section{Introduction}\label{intro}


An accurate and locally conservative recovered flux  (or equilibrated flux) for solutions of finite element method (FEM) plays an essential role in many applications, including a posteriori error estimation \cite{Ai:07b,braess2008equilibrated,Ve:09,BFH:14,ErnVo2015,CaCaZh:20,cai2021generalized} (a key component in adaptive mesh refinement procedure), compatible transport in heterogeneous media \cite{odsaeter2017postprocessing}, velocity reconstruction in porous media  \cite{ern2009accurate,vohralik2013posteriori,bastian2014fully,capatina2016nitsche}, to name a few. In this paper, we will focus on the application of conservative flux recovery in the a posteriori error estimation.  Equilibrated a posteriori error estimation has attracted much interest due to the guaranteed reliability bound of the conforming error, meaning the reliability constant is equal to one. This property implies that they are perfect for discretization error control on both coarse and fine meshes. Error control on coarse meshes is important but difficult for computationally challenging problems.

 The mathematical foundation of equilibrated a posteriori error estimator for the conforming finite element approximation is the Prager-Synge identity \cite{prager1947approximations} that is valid in $H^1(\Omega)$. 
Prager-Synge identity for piecewise $H^1(\Omega)$ (discontinuous) finite element approximations in both two and three dimensions is generalized in \cite{cai2021generalized}. It is shown that the error can be split into conforming and nonconforming errors, where the conforming error is guaranteed to be bound by the equilibrated error estimator, which is the energy norm of the difference between the numerical  and a recovered equilibrated flux. 

Many equilibrated flux recovery methods for finite element solutions rely on ad hoc or method-specific approaches. In this paper, we introduce a unified framework that recovers state-of-the-art locally conservative fluxes for various FEMs. In this paper, it is applied to conforming, nonconforming, and discontinuous Galerkin methods.
Despite the unified nature of the variational problem, the framework retains the flexibility to fully leverage the inherent properties of finite element spaces. 
 Since our method is derived from the residual operator based on the averaging flux, 
 we refer to this approach as the Equilibrated Averaging Residual Method (EARM) to distinguish it from the classical equilibrated residual method introduced in \cite{ainsworth2000posteriori}.

Discontinuous Galerkin (DG) methods are well known for inherently ensuring local equilibria. We refer to \cite{Ai:07b, ern2007accurate} and references therein for earlier works on explicit conservative flux reconstructions of linear and higher-order DG elements in elliptic problems. Notably, an obvious solution to the EARM for DG solutions naturally coincides with the results in \cite{Ai:07b} and \cite{ern2007accurate} for linear and arbitrary-order DG finite element solutions.

It is important to note that there are infinitely many solutions (locally conservative fluxes) to the variational problem introduced by the EARM, as its null space corresponds to the divergence-free flux  space. Our objective is to identify a solution that is both equilibrated and sufficiently accurate to ensure local efficiency in the a posteriori error analysis. Ideally, this solution can be constructed using only local data, avoiding the need to solve a global problem or local problems.

The nonconforming (NC) FEMs were first introduced to overcome the locking phenomenon for linear elasticity problem \cite{lee2003locking, brenner1992linear}. 
For NC methods applied to elliptic diffusion problems, explicit equilibrated flux recoveries in Raviert Thomas (RT) spaces have been constructed for the linear Crouzeix-Raviart (CR) in \cite{Ma:85}, (see \cite{ainsworth2005robust} in the context of the estimator), for the quadratic Fortin-Soulie elements in \cite{kim2012flux}, and for arbitrary odd-order NC methods in \cite{becker2016local, cai2021generalized}. These explicit methods are delicately designed and fully take advantage of the properties of the basis functions.
In this paper, we demonstrate that applying EARM naturally results in the explicit flux recoveries derived in \cite{Ma:85, cai2021generalized}, while also yielding novel recoveries.

Many researchers have also studied various methods of conservative flux recovery for conforming FEM
(e.g.,  \cite{ladeveze1983error, demkowicz1987adaptive, oden1989toward,  destuynder1999explicit, ainsworth2000posteriori, AiOd:93,larson2004conservative, vejchodsky2006guaranteed, braess2007finite, braess2008equilibrated, BrPiSc:09, Ve:09, CaZh:11,becker2016local, odsaeter2017postprocessing, CaCaZh:20, ErnVo2015}).
 Partition of unity (POU) is a commonly used tool for localization due to the lack of explicit construction technique. 
By using the POU, Ladev\`eze and Leguillon \cite{ladeveze1983error} initiated a local procedure 
to reduce the construction of an equilibrated flux to vertex patch-based local calculations. 
For the continuous linear finite element approximation to the Poisson equation in two dimensions, an equilibrated flux in the lowest order Raviart-Thomas space was explicitly constructed in \cite{braess2007finite, braess2008equilibrated}.
Without introducing a constraint minimization (see \cite{CaZh:11}), this explicit approach does not lead to a robust a posteriori error estimator concerning the coefficient jump.  The constraint minimization problem on each vertex 
patch can be solved by first computing an equilibrated flux and then calculating a divergence-free correction.  For recent developments relating to this method, see \cite{CaCaZh:20} and references therein. In \cite{ErnVo2015}, a unified method also based on the POU was developed. This method requires solving local mixed problems on a vertex patch for each vertex. In \cite{larson2004conservative, odsaeter2017postprocessing} a global problem is solved on the enriched piecewise constant DG space
to obtain the conservative flux.

In principle, the POU method can be uniformly applied to various finite element methods (FEMs), as it does not rely on any specific structure of the finite element basis functions. However, it is primarily used with the continuous Galerkin (CG) method, since simple recovery techniques are available for discontinuous Galerkin (DG) and nonconforming methods of odd orders.  
For the same reason, these methods can also be extended to nonconforming FEMs of arbitrary even order. Unsurprisingly, this approach tends to be relatively complex, as it requires solving star-patched local problems, which are either constrained \cite{CaZh:11} or formulated in a mixed setting \cite{ErnVo2015}.

For approaches beyond the POU method, we refer to \cite{becker2016local, ainsworth2000posteriori}, where two-dimensional Poisson problems are studied.  
In \cite{becker2016local}, the method is formulated as a unified mixed problem applicable to continuous, nonconforming, and discontinuous Galerkin (DG) methods in two dimensions. 
This approach enables the local construction of conservative fluxes for various FEMs.  However, extending this method to three dimensions for the conforming case is not straightforward, as the constraints imposed on the skeleton space in 2D do not have an obvious generalization to 3D.

The equilibrated residual method presented in \cite{ainsworth2000posteriori} (Chapter 6.4) also constructs a conservative flux for the conforming FEM without the POU. Instead of directly solving for the flux, this approach first determines its moments, enabling a natural localization to star-patch regions.
However, the local star-patch problem associated with each interior vertex and some boundary vertices has a one-dimensional kernel. To address this, an additional constraint is introduced, necessitating the use of the Lagrangian method to solve the local patch problem. Finally, a global assembly of the flux is performed based on the computed moments.

Applying the EARM directly to CG methods results in local patch problems similar to those in \cite{ainsworth2000posteriori} (see (5.5)).
To further enhance the equilibrated residual method, we introduce two approaches based on the averaging equilibrated residual method that eliminate the need for solving a constrained minimization problem using the Lagrangian method. Unlike \cite{becker2016local, ainsworth2000posteriori}, our method directly solves for the flux in the RT space, thereby avoiding the need for a final assembly based on moments. Additionally, we note that both methods extend naturally to NC methods of even orders, for which existing simple recovery techniques do not currently exist.

In the first method, we restrict the correction flux to the orthogonal complement of the divergence-free space. We prove that the space formed by functions defined as the jump of any function in the DG space across facets is a subspace of this orthogonal complement. This approach enables the formulation of a variational problem that guarantees a unique solution.  
We note that, in the case of the lowest-order ($0-th$) DG space, our method coincides with those in \cite{larson2004conservative, odsaeter2017postprocessing}. 

We note that the variational problem arising from the first method leads to a global problem. However, for linear and higher-order elements, it can be localized using the approach in \cite{becker2016local} by replacing the exact integral on facets with an inexact Gauss-Lobatto quadrature.
With Gauss-Lobatto quadrature in two dimensions, our recovered flux coincides with that in \cite{becker2016local}. Furthermore, the recovered flux can be computed entirely explicitly, as shown in \cite{capatina2024robust}. In three dimensions, our method requires solving only minor local problems that are neither constrained nor mixed star patch problems.

In the second method, we propose a localization approach that employs a partial POU. Specifically, we apply the POU only to the linear residual operator while leaving the bilinear form unchanged. Our method can be seen as a variation of the classical equilibrated residual method, where the residual operator \( r(v) \) is replaced by \( r(v \lambda_{z}) \), with \( \lambda_{z} \) being the linear barycentric basis function associated with the vertex \( z \).  
The advantage of our method is that it directly solves for the flux rather than its moments. Similar to \cite{ainsworth2000posteriori}, our local problems also admit a one-dimensional null space. However, since we solve for the flux directly, we can efficiently handle the constrained minimization problem by first computing a specific solution and a null-space solution of the unconstrained linear system, then forming an appropriate linear combination to satisfy the imposed constraint.

Since EARM systematically reproduces known equilibrated fluxes, its numerical performance aligns with well-established results in the literature. Given space constraints, we do not include additional numerical experiments. However, the new fluxes introduced by EARM are expected to exhibit the same theoretical guarantees and efficiency properties demonstrated in our analysis.



The remainder of this paper is organized as follows.
In \cref{sec:2}, we introduce the model problems and necessary notation. Next, in \cref{sec:3}, we demonstrate how our method systematically recovers existing state-of-the-art results for discontinuous Galerkin methods. In \cref{sec:4}, we derive both established recovered fluxes and novel recoveries for nonconforming methods. 

In \cref{sec:5}, we present the recovery techniques for conforming finite element methods (FEM), first deriving the existing equilibrated residual method and then introducing two enhanced approaches.
The automatic reliability results are provided in \cref{sec:6}. 
Finally, in \cref{sec:7}, we prove the robust efficiency of all newly developed fluxes introduced in this paper, demonstrating that the efficiency constant remains independent of the jump in the coefficients.

\section{Model problem}\label{sec:2}
\setcounter{equation}{0}

Let $\O$ be a bounded polygonal domain in $\mathbb{R}^d, d=2,3$, with Lipschitz 
boundary $\partial \O = \overline\Gamma_D \cup \overline \Gamma_N$, where
$ \Gamma_D \cap \Gamma_N = \emptyset$.
For simplicity, assume that
$\mbox{meas}_{d-1}(\Gamma_D) \neq 0$.
Considering the diffusion problem:
\begin{equation}\label{pde}
	-\gradt (A \nabla u)  =  f   \quad\mbox{in} \quad  \O,
\end{equation} 
with boundary conditions 
\[
	u = 0 \; \mbox{ on }  \Gamma_D \quad \mbox{and} \quad
	-A \nabla u \cdot \bn=g \;\mbox{ on } 
	\Gamma_N,
\]
where $\nabla \cdot$ and $\nabla$ are the respective divergence and gradient operators; $\bn$ is the outward unit 
vector normal to the boundary; $f \in L^2(\O)$ and $g\in H^{-1/2}(\Gamma_N)$ are given scalar-valued functions; and the diffusion coefficient $A(x)$ is symmetric, positive definite, piecewise constant full tensor. 


 In this paper, we use the standard notations and definitions for the Sobolev spaces. Let
\[
	H_D^1(\O) =\left\{v \in H^1(\O) \,:\,
	v=0 \mbox{ on } \Gamma_D \right\}.
\]
Then the corresponding variational problem of (\ref{pde}) is to  find $u \in H^1_D(\O)$ such that 
\begin{equation} \label{vp}
	a(u,\,v):= (A\nabla u, \nabla v) = (f, v)_{\O}- \left<g, v\right>_{\Gamma_N},
	\quad \forall  \;v\in H_D^1(\O),
\end{equation}
where $(\cdot, \cdot)_{\omega}$ is the $L^2$ inner product on the domain $\o$. 
The subscript $\omega$ is omitted from here to thereafter when $\o=\O$. 

\subsection{Notations}
 Let $\cT_h=\{K\}$ be a finite element partition of $\O$ that is regular, and denote 
 by $h_K$ the diameter of the element $K$. 
 Denote the set of all facets of the triangulation $\cT_h$ by
  \[
 	\cE := \cE_I \cup \cE_D \cup \cE_N,
 \]
 where $\cE_I$ is the set of interior element facets, and $\cE_D$ and $\cE_N$ are the sets of 
boundary facets belonging to the respective $\Gamma_D$ and $\Gamma_N$.
  For each $F \in \cE$, denote by $h_F$ the length of $F$ and by
 $\bn_F$ a unit vector normal to $F$.
 Let $K_F^+$ and $K_F^-$ be the two elements sharing the common facet $F \in \cE_I$ 
 such that the unit outward normal of $K_F^-$ coincides with $\bn_F$. When $F \in \cE_D \cup \cE_N $,
 $\bn_F$ is the unit outward normal to $\partial \O$ and denote by $K_F^-$ the element having the facet $F$.
 Note here that the term facet refers to the $d-1$ dimensional entity of the mesh. In 2D, a facet is equivalent to an edge, and in 3D, it is equivalent to a face. We also denote by $\cN$ the set of all vertices and by $\cN_{I}\subset \cN$ the set of interior vertices.
 
For each $K \in \cT_h (F \in \cE)$, let $\mathbb{P}_k(K) (\mathbb{P}_k(F))$ denote the space of polynomials of degree at most $k$ on $K (F)$. 
For each $K \in \cT_{h}$, we define a sign function $\mbox{sign}_{K}(F)$ on $\cE_{K} := \{F:  F \in \cE,
\mbox{ and } F \subset \partial K\}$:
\[
\mbox{sign}_{K}(F) = 
\begin{cases}
1 & \mbox{ if } \bn_{F} = \bn_{K}|_{F},\\
-1 & \mbox{ if } \bn_{F} =-\bn_{K}|_{F}.
\end{cases}
\]

\section{Applying the EARM for DG FEM}\label{sec:3}
In this subsection, we introduce and apply the EARM for DG FEM. This method fully explores the features of the DG space and explicitly constructs locally equilibrated fluxes that coincide with the existing state-of-the-art results. 

Define
\[
DG(\cT_h,k) = \{v\in L^2(\O)\,:\,v|_K\in \mathbb{P}_k(K),\quad\forall\,\,K\in\cT_h\}.
\]
Denote the $H(\mbox{div}; \O)$ conforming Raviart-Thomas (RT)  space of index $k$ with respect to $\cT_h$
by
{{\[
RT(\cT_h,k) = \left\{ \btau \in H(\mbox{div};\O) \,: \, \btau|_K \in RT(K,k),  \;\forall\, K\in\cT_h \right\},
\]}}
where $RT(K,k) = \mathbb{P}_{k}(K)^d + \bx \, \mathbb{P}_{k}(K) $.
Also let
{{
\[
RT_f(\cT_h,k) = \left\{ \btau \in RT(\cT_h,k): \gradt \btau =f_{k} \,\mbox{in} \, \O \mbox{ and} \, \btau \cdot \bn_F=
  g_{k,F} \, \mbox{on} \,F \in \cE_N \right\},
\]}}
where $f_k$ is the $L^2$ projection of $f$ onto $DG(\cT_h,k)$ and $g_{k,F}$ is the $L^2$ projection of $g|_F$ onto $\mathbb{P}_k(F)$.

  For each $F \in \cE_I$, we define the following weights:
  $\omega_F^\pm = \dfrac{ \a_{F}^\mp}{\a_F^-+\a_F^+}$ 
where $\a_F^\pm = \lambda(A|_{K_F^\pm})$ and  $\lambda(M)$ is the maximal eigenvalue of the matrix $M$. We also define  the following weighted average and jump operators:
\[
	\{v\}_w^F = \begin{cases} w_F^+ v_F^+ + w_F^- v_F^-, & F \in \cE_I,\\
	v ,& F \in \cE_D \cup \cE_N,
	\end{cases}
	\{v\}_F^w = \begin{cases} w_F^- v_F^+ + w_F^+ v_F^-, & F \in \cE_I,\\
	0,& F \in \cE_D \cup \cE_N,
	\end{cases}
 \]
 \[
		\jump{v}|_F = \left\{
	\begin{array}{ll}
		v|_{F}^- - v|_{F}^+,&\forall \,F \in \cE_I,\\[2mm]
		v|_F^-, & \forall \, F \in \cE_D \cup \cE_N.
	\end{array}
	\right.
 \]
The weighted average defined above is necessary to ensure the robustness of the error estimation with respect to the jump of $A$, see \cite{cai2017residual}. 
It is easy to show that for any $F \in \cE_{I}$,
\begin{equation} \label{weight1}
w_F^{\pm}\a_F^{\pm} \le \sqrt{\a^{\pm}\a_{F,min}}, \quad\,\,
 \frac{\omega_F^+}{\sqrt{\alpha_F^-}} \le \sqrt{\frac{1}{\alpha_{F,max}}},
	\,\quad \mbox{and} \quad\,
	\frac{\omega_F^-}{\sqrt{\alpha_F^+}} \le \sqrt{\frac{1}{\alpha_{F,max}}},
\end{equation}
where $\a_{F,min} = \min(\a_{F}^{+}, \a_{F}^{-})$ and $\a_{F,max} = \max(\a_{F}^{+}, \a_{F}^{-})$.

We will also use the following commonly used identity:
 \begin{equation}\label{jump-id}
 \jump{ u v}_F = \{v\}^w_F\, \jump{u}_F + \{u\}_w^F\, \jump{v}_F, \quad \forall F \in \cE.
 \eeq

We introduce the following  DG formulation for (\ref{pde}): find $u\in V^{1+\epsilon}(\cT_h)$ with $\epsilon >0$ and $V^{s}(\cT_h) = \{v \in L^2(\O), v|_K \in H^s(K) \mbox{ and } \gradt A \nabla v \in L^2(K), \forall K \in \cT_h\}$
 such that
 \begin{equation}\label{DGV}
 a_{dg}(u,\,v) = (f,\,v)   -
\left< g, v \right>_{\Gamma_N}, \quad\forall\,\, v \in V^{1+\epsilon}(\cT_h), 
\end{equation}
where 
\begin{equation}
	\begin{split}
a_{dg}(u,v)&=(A\nabla_h u,\nabla_h v)
 +\sum_{F\in\cE \setminus \cE_N}\int_F\gamma  \dfrac{\a_{F,min}}{h_F} \jump{u}
 \jump{v}\,ds 
 -\sum_{F\in\cE  \setminus \cE_N}\int_F\{A\nabla
u\cdot\bn_F\}_{w}^F \jump{v}ds 
 \\
 &+ \delta
\sum_{F\in\cE  \setminus \cE_N}\int_F\{A\nabla
v\cdot\bn_F\}_{w}^F \jump{u}ds .
\end{split}
\end{equation}
Here,  $\nabla_h$ is the discrete gradient operator defined elementwisely, 
and $\gamma$ is a positive constant, $\delta$ is a constant that takes value $1, 0$ or $-1$. 

The DG solution is to
seek $u^{dg}_k \in DG(\cT_h,k)$ such that
{{\begin{equation}\label{problem_dg}
a_{dg}(u^{dg}_k,\, v) = (f,\,v)_\O-  \left< g, v \right>_{\Gamma_N}\quad \forall\, v\in DG(\cT_h,k).
\end{equation}}}

We now introduce the EARM.

Observe that the numerical , $-A \nabla u_k^{dg}$, does not necessarily belong to the space $H(\mbox{div}; \O)$. In the {first step}, we define a functional 
\[
\tilde \bsigma_s: u_h \in DG(\cT_h, k) \rightarrow  \tilde \bsigma_s(u_h) \in RT(\cT_h,s), \quad
0 \le s \le k,
\]
  such that
 {\begin{equation} \label{rt:1:a}
 \int_F \tilde \bsigma_{s}(u_h) \cdot \bn_F \phi \,ds = 
 \begin{cases}
 - \int_F\{A \nabla u_h \cdot \bn_F\}_w^F \phi \,ds& \forall F \in \cE \setminus \cE_N,\\
\int g \phi \,ds, & \forall F \in \cE_N,
\end{cases}
\quad \forall \phi \in \mathbb{P}_s(F).
 \end{equation}}
 and when $s\ge1$, additionly satisfies that
\begin{equation}\label{rt:1:dg}
 \begin{split}
 	&( \tilde\bsigma_{s}(u_h) , \bm{\psi})_K =-(A \nabla u_h , \bm{\psi} )_K, \quad\forall K \in \cT_h, \forall  \bm{\psi} \in \mathbb{P}_{s-1}(K)^d.
\end{split}
 \end{equation}
 Since the recovered flux $\tilde \bsigma_{s}(u_{h})$ uses the averaging  on the facets, we will refer to this  as the \textit{weighted averaging }.
 
  For simplicity, we assume that $g|_{F} = g_{k-1,F}$. Since $\{A \nabla u_h \cdot \bn_F\}_w^F  \in \mathbb{P}_{k-1}(F)$, it is easy to see that when $s \ge k-1$, there holds
  {\begin{equation} \label{rt:1:aa}
 \tilde \bsigma_{s}(u_h) \cdot \bn_F = 
 \begin{cases}
 - \{A \nabla u_h \cdot \bn_F\}_w^F& \forall F \in \cE \setminus \cE_N,\\
 g , & \forall F \in \cE_N.
\end{cases}
 \end{equation}}

 

It is easy to see that the weighted averaging  $\tilde\bsigma_s^{dg}:=\tilde \bsigma_{s}(u_{k}^{dg}) \in H(\mbox{div};\O)$, however, is not necessarily locally conservative.
In the second step, we aim to find a correction  $ \bsigma_s^{\Delta} \in RT(\cT_h, s)$ such that 
\begin{equation}\label{DG-final-}
\hat\bsigma_h^{dg}: = \tilde\bsigma_{s}^{dg}+  \bsigma_s^{\Delta} \in RT_f(\cT_h,s).
\end{equation}

Define the \textit{averaging residual operator} for any $u_{h}$ being a finite element solution:
\begin{equation}\label{r(v)}
r_{s}(v) = \sum_{K}r_{s,K}(v), \quad r_{s,K}(v) := (f - \gradt \tilde \bsigma_s(u_{h}),v)_K.
\end{equation}
Our goal is to find  $ \bsigma_s^{\Delta} \in RT(\cT_h, s)$ such that
\begin{equation}\label{-equation}
  (\gradt \bsigma_{s}^{\Delta},v) =  r_{s}(v) \quad \forall v \in DG(\cT_{h},s).
\end{equation}

 By integration by parts, the definition of $ \tilde \bsigma_s^{dg}$ and \cref{problem_dg}, for any $v \in \mathbb{P}_s(K), 0 \le s \le k$ there holds
\begin{equation}\label{2.8}
\begin{split}
&r_{s}(v) =  (f,v)_{K}  + ( \tilde \bsigma_s^{dg}, \nabla v)_{K} - < \tilde \bsigma_s^{dg} \cdot \bn_{K}, v>_{\partial K}\\
=&  (f,v)_K -(A \nabla_h u_k^{dg}, \nabla v)_K  +
\!\!\!\sum_{F\in\cE_K \setminus \cE_N}\!\!\!\int_F \{A\nabla u_k^{dg}\cdot\bn_F\}_{w}^F\jump{v}ds 
 -
\!\!\!\sum_{F\in\cE_K \cap \cE_N} \!\!\!\left< g,v\right>_F\\
=&
\sum_{F\in\cE_K \setminus \cE_N}\int_F\gamma  \dfrac{\a_{F,min}}{h_F} \jump{u_k^{dg}}
 \jump{v}\,ds
 + \delta
\sum_{F\in\cE_{K}  \setminus \cE_N}\int_F\{A\nabla
v\cdot\bn_F\}_{w}^F \jump{u_k^{dg}}ds.
\end{split}
\end{equation}
On the other side, from \cref{-equation} and integration by parts, we also have
{{\begin{equation}\label{2.9}
\begin{split}
r_{s}(v)= r_{s,K}(v) = (\gradt \bsigma_s^{\Delta},v)_K&= -( \bsigma_s^{\Delta}, \nabla v)_K
+\sum_{F\in\cE_K}\int_F \bsigma_s^{\Delta} \cdot \bn_F \jump{v}ds.
\end{split}
\end{equation}}}
Given \cref{2.8} with (\ref{2.9}), it is natural to define  $\bsigma_s^{\Delta} \in RT(\cT_{h},s)$ for all $K \in \cT_h$ such that
 \begin{equation} \label{rt:-cg}
\int_F \bsigma_s^{\Delta} \cdot \bn_F \phi \,ds = 
 \begin{cases}
\displaystyle  \gamma \dfrac{\a_{F,min}}{h_F}\int_F \jump{u_k^{dg}} \phi \,ds,& \forall F \in \cE \setminus \cE_N,\\
 0, & \forall F \in  \cE_N , \quad \forall \phi \in \mathbb{P}_k(F).
\end{cases}
 \end{equation}
 and, when, $s \ge 1$,
  \begin{equation}\label{rt::dg-cg}
 \begin{split}
 	&( \bsigma_s^{\Delta} , \bm{\psi})_K =-
	 \delta 
\sum_{F\in\cE_K  \setminus \cE_N}\int_F\{A\bm{\psi}\cdot\bn_F\}_{w}^F \jump{u_k^{dg}}ds.
	\quad \forall  \bm{\psi} \in \mathbb{P}_{s-1}(K)^d.
\end{split}
 \end{equation}

\begin{lemma}
The recovered flux $\hat \bsigma_{h}^{dg}$ defined in \cref{DG-final-} where $\bsigma_{s}^{\Delta }$ is defined in \cref{rt:-cg}-\cref{rt::dg-cg} belongs to $RT(\cT_{h},s)$ for some $ (0 \le s \le k)$. Futhermore, it is locally conservative, satisfying $ \hat \bsigma_s^{dg} \in RT_f(\cT_h,s)$. 
 \end{lemma}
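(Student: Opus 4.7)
The plan is to verify three properties of $\hat\bsigma_h^{dg}=\tilde\bsigma_s^{dg}+\bsigma_s^\Delta$: (i) $\hat\bsigma_h^{dg}\in RT(\cT_h,s)$; (ii) the Neumann trace condition $\hat\bsigma_h^{dg}\cdot\bn_F = g_{s,F}$ on $\cE_N$; and (iii) the elementwise divergence identity $\gradt\hat\bsigma_h^{dg}=f_s$. For (i) and (ii), observe that both $\tilde\bsigma_s^{dg}$ and $\bsigma_s^\Delta$ are prescribed through the canonical RT degrees of freedom -- normal-trace moments against $\mathbb{P}_s(F)$ on every facet and, when $s\ge 1$, interior moments against $\mathbb{P}_{s-1}(K)^d$. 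The facet values given by \cref{rt:1:a} and \cref{rt:-cg} depend only on data intrinsic to $F$ (the weighted average $\{A\nabla u_k^{dg}\cdot\bn_F\}_w^F$ and the jump $\jump{u_k^{dg}}$), so the normal traces are single-valued across interior facets; hence both $\tilde\bsigma_s^{dg}$ and $\bsigma_s^\Delta$, and therefore their sum, belong to $RT(\cT_h,s)$. On $F\in\cE_N$ the Neumann branches of \cref{rt:1:a} and \cref{rt:-cg} give $\tilde\bsigma_s^{dg}\cdot\bn_F = g_{s,F}$ (using the assumption $g|_F = g_{k-1,F}$ and $s\le k$) and $\bsigma_s^\Delta\cdot\bn_F = 0$, yielding (ii).

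The substantive work lies in (iii). Fix an element $K$ and an arbitrary $v\in\mathbb{P}_s(K)$, extended by zero elsewhere, so that $v\in DG(\cT_h,s)$. Integration by parts on $K$ gives
\[
(\gradt\bsigma_s^\Delta,v)_K = -(\bsigma_s^\Delta,\nabla v)_K + \sum_{F\in\cE_K}\int_F\bsigma_s^\Delta\cdot\bn_K\,v\,ds.
\]
Since $\nabla v\in\mathbb{P}_{s-1}(K)^d$ and $v|_F\in\mathbb{P}_s(F)$, the interior term is evaluated via \cref{rt::dg-cg} with $\bm{\psi}=\nabla v$, and each facet integral via \cref{rt:-cg} with $\phi=v|_F$. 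Converting between $\bn_K$ and $\bn_F$ through $\mbox{sign}_K(F)$ and noting $\jump{v}|_F = \mbox{sign}_K(F)\,v|_K$ (because $v$ vanishes on the neighboring element), the right-hand side reproduces exactly the RHS of \cref{2.8}. By that equation -- itself a consequence of the DG variational problem \cref{problem_dg} -- the RHS equals $r_{s,K}(v) = (f-\gradt\tilde\bsigma_s^{dg},v)_K$. Adding $(\gradt\tilde\bsigma_s^{dg},v)_K$ to both sides yields $(\gradt\hat\bsigma_h^{dg},v)_K = (f,v)_K$ for every $v\in\mathbb{P}_s(K)$, and since $\gradt\hat\bsigma_h^{dg}|_K\in\mathbb{P}_s(K)$, this identifies it with $f_s|_K$, proving (iii).

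The main obstacle is the sign and weight bookkeeping in (iii): reconciling the $\bn_K$ arising from elementwise integration by parts with the $\bn_F$ used in the jump and weighted-average notation, and checking that the term $\delta\{A\nabla v\cdot\bn_F\}_w^F\jump{u_k^{dg}}$ in \cref{2.8} is precisely what \cref{rt::dg-cg} prescribes when tested against $\bm{\psi}=\nabla v$. The corner case $s=0$ is trivial because \cref{rt::dg-cg} is vacuous and $\nabla v=0$, so only the facet contribution remains. Once the sign conventions are aligned, the identity follows by direct substitution, and no global solve or inf-sup argument is needed.
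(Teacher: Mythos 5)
Your proposal is correct and follows essentially the same route as the paper: integrate the correction flux by parts on each element, substitute the degree-of-freedom definitions \cref{rt:-cg}--\cref{rt::dg-cg}, recognize the resulting boundary and volume terms as the right-hand side of \cref{2.8} so that they equal $r_{s,K}(v)=(f-\gradt\tilde\bsigma_s^{dg},v)_K$, and conclude $(\gradt\hat\bsigma_h^{dg},v)_K=(f,v)_K$. Your treatment is slightly more explicit than the paper's on the membership in $RT(\cT_h,s)$, the Neumann trace $g_{s,F}$, and the identification $\gradt\hat\bsigma_h^{dg}|_K=f_s|_K$, but the core argument is identical.
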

 \begin{proof}
First $\hat \bsigma_{h}^{dg} \in RT(\cT_{h},s)$ is immediate.
To prove that $\hat \bsigma_h^{dg} \in RT_f(\cT_h,s)$, 
 by \cref{DG-final-}, integration by parts, the definition of  $ \bsigma_s^{\Delta}$,  \cref{2.8} and \cref{r(v)}, we have for any $v \in DG(K,s)$:
	\begin{equation}
	\begin{split}
		&( \nabla \cdot \hat\bsigma_h^{dg}, v)_{K} 
		= 
		(\nabla \cdot  \tilde\bsigma_s^{dg}, v)_{K} +
		(\nabla \cdot   \bsigma_s^{\Delta}, v)_{K} \\
		=& 
		(\nabla \cdot  \tilde\bsigma_s^{dg}, v)_{K} -
		( \bsigma_s^{\Delta},  \nabla v)_{K} +
		\sum_{F \in \cE_{K}} <  \bsigma_s^{\Delta} \cdot \bn_{F}, \jump{v}>_{F} \\
		=&
		(\nabla \cdot  \tilde\bsigma_s^{dg}, v)_{K} +
	\sum_{F\in\cE_K  \setminus \cE_N}\delta\int_F \{A\nabla v \cdot\bn_F\}_{w}^F \jump{u_{k}^{dg}}ds 
	+
		\sum_{F \in \cE_{K} \setminus \cE_{N}} \gamma \dfrac{\a_{F,min}}{h_F}\int_F  \jump{u_k^{dg}} \jump{v} \,ds  \\
		 =& (\nabla \cdot  \tilde\bsigma_s^{dg}, v)_{K} + r_{s}(v)  = 
		(\nabla \cdot  \tilde\bsigma_s^{dg}, v)_{K} + (f - \gradt \tilde \bsigma_s^{dg},v)_K
		= (f,v)_{K}.
		\end{split}
	\end{equation}
This completes the proof of the lemma.
 \end{proof}
The recovered flux here is similar to those introduced in \cite{Ai:07b,ern2007accurate} for $s =k$ but with modified weight. 

\begin{remark}
The solution to \cref{-equation} is not unique, as the space \( H(\text{curl},\O) \) lies in its null space, resulting in infinitely many possible solutions. Our focus is to identify solutions to \cref{-equation} that are computationally efficient—completely explicit, without requiring the solution of any local or global problems—and possess provable accuracy properties, such as robust local efficiency in a posteriori error estimation.

Indeed, thanks to the non-uniqueness, the variational problem provides flexibility to fully exploit the inherent properties of finite element solutions. By leveraging the complete discontinuity property of DG finite element solutions, the solution to \cref{-equation} coincides with existing state-of-the-art methods for various FEMs. 
\end{remark}

\begin{remark}
Since \( r_{s}(v) \) in the right-hand side of \cref{r(v)} is the residual based on the weighted averaging , we refer to our method as the \textit{Equilibrated Averaging Residual Method (EARM)} to distinguish it from the classical equilibrated residual method introduced in \cite{ainsworth2000posteriori}. 
\end{remark}

We now outline a pseudo-algorithm for the EARM.
Define
\[
RT_f(\cT_h,s',s) = \left\{ \btau \in RT(\cT_h,s'): \Pi_{s}(\gradt \btau) =f_{s} \,\mbox{in} \, \O \mbox{ and} \, \btau \cdot \bn_F=
  g_{s,F} \, \mbox{on} \,F \in \cE_N \right\}.
\]

%
%
%

\begin{algorithm}
\caption{The pseudocode for EARM}\label{algorithm}
\begin{algorithmic}[1]
\State \textbf{Input:} Finite element solution $u_{h}$, where $u_{h}$ is piecewise polynomial of order $k$.
\State Step 1: Compute the weighted averaging  by \cref{rt:1:a}-- \cref{rt:1:dg}:
$$\tilde{\boldsymbol{\sigma}}_{k-1}(u_{h}) \in RT(\mathcal{T}_{h},k-1)
\quad \mbox{or} \quad
\tilde{\boldsymbol{\sigma}}_{s}(u_{h}), 0 \le s \le k-1.
 $$ 
\State Step 2: Solve for a correction  flux: 
$$\boldsymbol{\sigma}_{s}^{\Delta} \in RT(\mathcal{T}_{h},s)$$  
for some $0 \le s \le k-1$ such that
\begin{equation}\label{-equation-general}
  (\gradt \bsigma_{s}^{\Delta},v)_K =  r_{s}(v) \quad \forall v \in  DG(\cT_{h},s).
\end{equation}
\State Step 3: Obtain the equilibrated flux for some $ 0 \le s \le k-1$:
\[
	\hat \bsigma_{h} = \tilde{\boldsymbol{\sigma}}_{k-1}(u_{h}) +  \bsigma_{s}^{\Delta} \in 
	RT_f(\cT_h, max(s,k-1),s), 
\]
or
\[
	\hat \bsigma_{h} = \tilde{\boldsymbol{\sigma}}_{s}(u_{h}) +  \bsigma_{s}^{\Delta} \in 
	RT_f(\cT_h, s,s). 
\]
\end{algorithmic}
\end{algorithm}



\section{Applying EARM for NC FEMs}\label{sec:4}
 We now turn to NC FEMs, where the lack of continuity introduces additional challenges in flux recovery.
We restrict our discussion on triangular meshes in two dimensions for the NC FEMs.   

Define
\begin{equation} 
NC(\cT_h,k)  \!=\! 
  \left\{\! v \in L^2(\O) \! :  v|_K \in \mathbb{P}_k(K) \forall \, K \!\in\! \cT_h,
	 \int_F \jump{v}\, p ds =0 \forall \, p \in \mathbb{P}_{k-1}(F),   F \in \cE_I  \!\right\}
\end{equation} 
and its subspace by
\[
	NC_{0,\Gamma_D}(\cT_h,k) =
	\left\{
	v \in 	NC(\cT_h,k) \! :\, \int_F v \, p \,ds =0, \; \forall \,p \in \mathbb{P}_{k-1}(F), F \in \cE_D
	\right\}.
\]
The NC finite element approximation of order $k (k \ge 1)$ is to find $u_k^{nc} \in NC_{0,\Gamma_D}(\cT_h,k)$ such that
\begin{equation}  \label{NC solution}
	 (A \nabla_h u_k^{nc}, \nabla_h v)
	=(f,v)_\O-\left<g, v\right>_{\Gamma_N}, \quad \forall \, v  \in NC_{0,\Gamma_D}(\cT_h,k).
\end{equation} 

Following \Cref{algorithm}, we firstly define the weighted averaging flux  $\tilde\bsigma_{k-1}^{nc} :=\tilde\bsigma_{k-1}(u_k^{nc})$ which is in  $RT(\cT_h, k-1)$ by \cref{rt:1:a}--\cref{rt:1:dg}.
The next step is to find $\bsigma_s^\Delta \in RT(\cT_h,s)$ such that  \cref{-equation-general} holds.
Unlike the DG scheme, there exists no trivial explicit expression for the correction  $ \bsigma_s^{\Delta}$ upon the NC scheme.

Different from the DG method, besides the local conservation equation \cref{-equation-general}, special attention is required for compatibility. More specifically,  the local flux recovery for NC solutions should also satisfy the following global compatibility equation, 
 
 \begin{equation}\label{eq:compatibility}
 r(v):= r_{k-1}(v) =0 \quad \forall v \in  NC_{0,\Gamma_D}(\cT_h,k).
 \end{equation}
 For simplicity, we will use $ r(v)$ for  $r_{k-1}(v)$ from here to thereafter.
Indeed, \cref{eq:compatibility} can be easily proved by the definition of $ \tilde \bsigma_{k-1}^{nc}$ and integration by parts as following:
 \begin{equation*}
 \begin{split}
 r(v)&= (f,v)_\O - (\gradt \tilde \bsigma_{k-1}^{nc},v)_{\O} = 
  (f,v)_{\O} +  ( \tilde \bsigma_{k-1}^{nc}, \nabla_{h} v)_{\O} -  \sum_{F \in \cE}( \tilde \bsigma_{k-1}^{nc} \cdot \bn_{F}, \jump{v}) 
  \\
  &=
        (f,v)_{\O}-(A \nabla_h u_k^{nc}, \nabla_h v)_{\O} -\left<g, v\right>_{\Gamma_N} =0.
  \end{split}
 \end{equation*}
Specifically, for any $v \in NC_{0,\Gamma_D}(\cT_h,k)$,  and $\mbox{supp}(v) = K$ for some $K \in \cT_{h}$, we also have $r(v) =0.$
The residual problem \cref{-equation-general} then should satisfy
\begin{equation}\label{compatbility-interior}
0 = r(v) =(\gradt \bsigma_{s}^{\Delta},v)_K
=
 -( \bsigma_s^{\Delta}, \nabla v)_K =0.
\end{equation}

From \cref{compatbility-interior}, it is natural to impose the following restriction for the interior degree of freedom of $\bsigma_s^{\Delta}$ for the NC solutions by
\[
( \bsigma_s^{\Delta},  \bm{\psi})_K =0 \quad \forall   \bm{\psi} \in \mathbb{P}_{s-1}(K)^d \quad \mbox{when } s\ge 1.
\]

Define the space for $s \ge 1$,
 \[
 \mathring{RT}(\cT_h,s) = \left\{ \btau\in RT(\cT_h,s) : ( \btau,  \bm{\psi})_K =0 \; \forall  \bm{\psi} \in \mathbb{P}_{s-1}(K)^d, K \in \cT_h, \btau \cdot \bn_F=
0,F \in \cE_{N} \right\}.
 \]
 In the case $s=0$, we let $ \mathring{RT}(\cT_h,0) =  {RT}(\cT_h,0)$.
 Then,  \cref{-equation-general}, becomes finding $ \bsigma_s^{\Delta} \in \mathring{RT}(\cT_{h},s)$ such that
 \begin{equation}\label{nc-compatbility}
\begin{split}
\sum_{F\in\cE \setminus \cE_{N}}\int_F \bsigma_s^{\Delta} \cdot \bn_F \jump{v}ds =r(v)
\quad \forall v \in DG(\cT_{h},s).
\end{split}
\end{equation}

We first provide the following lemma that provides compatibility.
\begin{lemma}\label{lem:compatibility}
Let $\mathcal{A}(\bftau_h,v) := \underset{F \in \cE \setminus \cE_{N}}{ \sum } \left<\bftau_h \cdot \bn_F, \jump{v} \right>_F$ be a bilinear form defined on $\bftau_h \in \mathring{RT}(\cT_h, s) \times DG(\cT_{h},k), \,  s \le k-1$.
There holds
 \begin{equation}\label{compatibility}
\begin{split}
	&\mathcal{A}(\bftau_h,v)= r(v) = 0, \quad\forall v \in NC_{0,\Gamma_D}(\cT_h,k).
\end{split}
\end{equation}
\end{lemma}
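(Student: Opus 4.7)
The plan is to prove the two claimed equalities separately and then combine them. The equality $r(v)=0$ for $v \in NC_{0,\Gamma_D}(\cT_h,k)$ has essentially been established already in the paragraph immediately preceding the lemma, via integration by parts, the definition of $\tilde\bsigma_{k-1}^{nc}$ (whose normal trace encodes the weighted averaging of $A\nabla_h u_k^{nc}\cdot\bn_F$ and the Neumann datum $g$), and the NC variational problem \cref{NC solution}. So I would simply invoke that computation, noting that test functions $v\in NC_{0,\Gamma_D}(\cT_h,k)$ are precisely those for which the boundary contributions on $\Gamma_D$ vanish (in a moment sense) and the jumps $\jump{v}$ on interior facets are $L^2$-orthogonal to $\mathbb{P}_{k-1}(F)$.

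For the new piece $\mathcal{A}(\bftau_h,v)=0$, the key observation is a matching of polynomial degrees between the test space and the normal trace of $\bftau_h$. Specifically, for any $\bftau_h \in \mathring{RT}(\cT_h,s)$ and any facet $F\in\cE$, the normal trace $\bftau_h\cdot\bn_F$ lies in $\mathbb{P}_s(F)$. Since $s \le k-1$, we have $\bftau_h\cdot\bn_F \in \mathbb{P}_{k-1}(F)$. I would then split the sum defining $\mathcal{A}$ into interior and Dirichlet facets:
\begin{equation*}
\mathcal{A}(\bftau_h,v) = \sum_{F\in\cE_I}\int_F \bftau_h\cdot\bn_F\,\jump{v}\,ds + \sum_{F\in\cE_D}\int_F \bftau_h\cdot\bn_F\, v\,ds.
\end{equation*}
On each interior facet, the NC constraint $\int_F \jump{v}\,p\,ds = 0$ for all $p\in\mathbb{P}_{k-1}(F)$ applied with $p = \bftau_h\cdot\bn_F$ kills the integrand in the $L^2(F)$ pairing sense. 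On each Dirichlet facet, the analogous constraint $\int_F v\,p\,ds = 0$ built into $NC_{0,\Gamma_D}(\cT_h,k)$ does the same. Neumann facets are excluded from the sum by construction of $\mathcal{A}$ and are anyway annihilated by the condition $\bftau_h\cdot\bn_F = 0$ in the definition of $\mathring{RT}(\cT_h,s)$.

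There is really no obstacle beyond this polynomial-degree bookkeeping; the case $s=0$ (where $\mathring{RT}(\cT_h,0)=RT(\cT_h,0)$ and normal traces are constants on each facet) is covered by the same argument since $\mathbb{P}_0(F)\subseteq \mathbb{P}_{k-1}(F)$ for $k\ge 1$. Combining the two equalities then yields the statement of the lemma. The only point worth flagging carefully is to make sure the bilinear form is interpreted with $\bftau_h\cdot\bn_F$ in $L^2(F)$ (which is the case since each $\bftau_h|_K \in RT(K,s)$ has a well-defined polynomial normal trace), so that the NC orthogonality conditions apply pointwise on each facet rather than only in a weaker trace sense.
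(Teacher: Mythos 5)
Your proposal is correct and is essentially the paper's own argument, merely spelled out in detail: the paper's proof simply invokes the identity $r(v)=0$ established just before the lemma and "the property of the NC space," which is precisely your degree-matching observation that $\bftau_h\cdot\bn_F\in\mathbb{P}_s(F)\subseteq\mathbb{P}_{k-1}(F)$ is annihilated by the moment conditions defining $NC_{0,\Gamma_D}(\cT_h,k)$ on interior and Dirichlet facets, with Neumann facets excluded (and having zero normal trace) by construction. No gaps; your version is just a more explicit write-up of the same reasoning.
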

\begin{proof}
The proof of the lemma directly follows from \cref{eq:compatibility} and the property of the NC space.
\end{proof}

\begin{lemma}\label{lem:existence}
Assume that for the NC finite element solution $u_{k}^{nc}$, there exists a solution  $\bsigma_s^{\Delta} \in 
\mathring{RT}( \cT_h, s), s\le k-1$ such that
\begin{equation}\label{nc-form}
\mathcal{A}(\bsigma_s^{\Delta} ,v) = r(v) \quad \forall  v \in  DG(\cT_h,s), 
\end{equation}
where $r(v) = (f - \nabla \cdot  \tilde\bsigma_{k-1}^{nc} ,v)$. Then there holds
\[
 \tilde\bsigma_{k-1}^{nc}+  \bsigma_s^{\Delta} \in 
RT_f(\cT_h,k-1,s), \quad
\mbox{and} \quad
 \tilde\bsigma_{s}^{nc}+  \bsigma_s^{\Delta} \in 
RT_f(\cT_h,s,s).
\]

\end{lemma}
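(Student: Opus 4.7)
The plan is to verify, for the assumed solution $\bsigma_s^{\Delta}$ of \cref{nc-form}, the two conditions defining $RT_f(\cT_h,\cdot,s)$: the divergence condition $\Pi_s(\gradt \btau) = f_s$ in $\O$ and the Neumann trace condition on $\cE_N$. Membership in the ambient RT space is automatic from the inclusions $\bsigma_s^{\Delta} \in \mathring{RT}(\cT_h,s) \subset RT(\cT_h,s) \subset RT(\cT_h,k-1)$, together with $\tilde\bsigma_{k-1}^{nc} \in RT(\cT_h,k-1)$ and $\tilde\bsigma_s^{nc} \in RT(\cT_h,s)$.

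The heart of the proof is to localize the global equation \cref{nc-form} to each element. I would fix $K \in \cT_h$, let $v \in \mathbb{P}_s(K)$, and extend $v$ by zero to all of $\O$; this yields a legal test function in $DG(\cT_h,s)$. For this choice only the facets of $K$ contribute to $\mathcal{A}$, with $\jump{v}|_F = \mbox{sign}_K(F)\, v|_F$ on each $F \in \cE_K$. Using $\bsigma_s^{\Delta} \cdot \bn_F = 0$ on $\cE_N$ from the $\mathring{RT}$ definition, the boundary piece over $\partial K \cap \Gamma_N$ drops out, so
\[
\mathcal{A}(\bsigma_s^{\Delta},v) = \int_{\partial K} \bsigma_s^{\Delta} \cdot \bn_K\, v\, ds = (\gradt \bsigma_s^{\Delta}, v)_K + (\bsigma_s^{\Delta}, \nabla v)_K
\]
after integration by parts. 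The second term vanishes: for $s \ge 1$ because $\nabla v \in \mathbb{P}_{s-1}(K)^d$ and the zero-moment property built into $\mathring{RT}(\cT_h,s)$ kills such pairings; for $s = 0$ trivially, as $\nabla v \equiv 0$. Combining with \cref{nc-form} gives $(\gradt \bsigma_s^{\Delta}, v)_K = r(v) = (f - \gradt \tilde\bsigma_{k-1}^{nc}, v)_K$, hence
\[
(\gradt(\tilde\bsigma_{k-1}^{nc} + \bsigma_s^{\Delta}), v)_K = (f,v)_K \qquad \forall\, v \in \mathbb{P}_s(K),
\]
which is exactly $\Pi_s\bigl(\gradt(\tilde\bsigma_{k-1}^{nc}+\bsigma_s^{\Delta})\bigr) = f_s$ on each $K$.

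For the second conclusion, I would observe that the moment conditions \cref{rt:1:a}--\cref{rt:1:dg} give $\Pi_s(\gradt \tilde\bsigma_{k-1}^{nc}) = \gradt \tilde\bsigma_s^{nc}$ on each $K$ (the two weighted averaging fluxes have identical $\mathbb{P}_s$-moments of their divergence, since their facet moments against $\mathbb{P}_s(F)$ and interior moments against $\mathbb{P}_{s-1}(K)^d$ coincide by construction); thus the same elementwise identity holds with $\tilde\bsigma_s^{nc}$ replacing $\tilde\bsigma_{k-1}^{nc}$, and because $\gradt(\tilde\bsigma_s^{nc}+\bsigma_s^{\Delta})$ already lies in $\mathbb{P}_s(K)$ this delivers the divergence condition without a projection. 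The Neumann trace condition then follows immediately from $\tilde\bsigma_{k-1}^{nc}\cdot\bn_F = g_{k-1,F}$ (resp.\ $\tilde\bsigma_s^{nc}\cdot\bn_F = g_{s,F}$) on $\cE_N$ from \cref{rt:1:a}, combined with $\bsigma_s^{\Delta}\cdot\bn_F = 0$ there. No serious obstacle is anticipated: the whole argument rides on the zero-moment definition of $\mathring{RT}$, which was engineered precisely so that the interior term $(\bsigma_s^{\Delta},\nabla v)_K$ disappears, allowing the correction to adjust the divergence through its facet moments alone; once the localization to a single $K$ via a zero-extended test function is set up, the rest is integration by parts and bookkeeping.
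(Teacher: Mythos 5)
Your proposal is correct and follows essentially the same route as the paper's proof: localize to a zero-extended test function $v\in\mathbb{P}_s(K)$, integrate by parts, use the zero interior moments built into $\mathring{RT}(\cT_h,s)$ to kill $(\bsigma_s^{\Delta},\nabla v)_K$, and invoke \cref{nc-form} to turn the facet term into $r(v)$, giving $(\gradt(\tilde\bsigma_{k-1}^{nc}+\bsigma_s^{\Delta}),v)_K=(f,v)_K$; the second inclusion is obtained exactly as in the paper, from the fact that $\tilde\bsigma_s^{nc}$ and $\tilde\bsigma_{k-1}^{nc}$ have identical $\mathbb{P}_s$-moments of their divergence. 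Your write-up merely supplies a bit more justification (the moment-matching argument via \cref{rt:1:a}--\cref{rt:1:dg} and the Neumann trace bookkeeping) for steps the paper states without proof.
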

\begin{proof}
	To prove that $ \tilde\bsigma_{k-1}^{nc}+  \bsigma_s^{\Delta} \in RT_f(\cT_h,k-1,s)$,
it is sufficient to prove the following local conservation for any $v \in \mathbb{P}_{s}(K) \subset DG(\cT_h,s)$. From integration by parts and \cref{nc-form}, we have
	\begin{equation}\label{conservation-nc}
	\begin{split}
		&(\nabla \cdot( \tilde\bsigma_{k-1}^{nc}+  \bsigma_s^{\Delta}), v)_{K} 
		= 
		(\nabla \cdot  \tilde\bsigma_{k-1}^{nc}, v)_{K} -
		( \bsigma_s^{\Delta},  \nabla v)_{K} +
		(  \bsigma_s^{\Delta} \cdot \bn_{F}, \jump{v})_{\partial K} \\
		&= 
		(\nabla \cdot  \tilde\bsigma_{k-1}^{nc}, v)_{K} +
		(  \bsigma_s^{\Delta} \cdot \bn_{F}, \jump{v})_{\partial K} \\
		 &=(\nabla \cdot  \tilde\bsigma_{k-1}^{nc}, v)_{K} + \mathcal{A}(\bsigma_s^{\Delta},v)
		=  (\nabla \cdot  \tilde\bsigma_{k-1}^{nc}, v)_{K} + r(v) = (f,v)_{K}.
		\end{split}
	\end{equation}
Note that $( \bsigma_s^{\Delta},  \nabla v)_{K} =0$ since  $ \bsigma_s^{\Delta} \in \mathring{RT}( \cT_h, s)$ and $\nabla v \in \mathbb{P}_{s-1}(K)^{d}$.

Finally, $\tilde\bsigma_{s}^{nc}+  \bsigma_s^{\Delta} \in 
RT_f(\cT_h,s,s)$ is direct consequence of \cref{conservation-nc} and the fact that
\[
\int_{K} \nabla \cdot  \tilde \bsigma_{s}^{nc} \cdot  p \,dx 
= \int_{K} \nabla \cdot \tilde \bsigma_{k-1}^{nc} \cdot p \,dx \quad \forall K \in \cT_{h}, p \in \mathbb{P}_{s}{(K)}.
\]

This completes the proof of the lemma.

\end{proof}
Our goal for the  recovery of NC solutions remains to seek explicit solutions to \cref{nc-form} by fully leveraging the properties of the finite element space.  Since the NC spaces for even and odd orders have dramatically different structures, we analyze them separately in two categories. 

\subsection{NC FEM of arbitrary odd order}
First, we describe basis functions of $NC(\cT_h,k)$, $k$ being \textit{odd} and their properties.
To this end, for each $K\in\cT_{h}$, let $m_k=\mbox{dim}(\mathbb{P}_{k-3}(K))$ for $k\ge 3$ and $m_k=0$ for $k< 3$. 
Denote by
$\{\bx_{j,K}, j=1, \cdots, m_k\}$
the set of all interior Lagrange points in $K$ with respect to
the space $\mathbb{P}_k(K)$ and
by $P_{j,K} \in \mathbb{P}_{k-3}(K)$ the nodal basis function corresponding to $\bx_j$, i.e., 
 \[
 P_{j,K}(\bx_{i,K}) =\delta_{ij} \,\,\mbox{ for } i=1,\,\cdots,\, m_k,
 \]  
where $\delta_{ij}$ is the Kronecker delta function.
For each edge $F \in \mathcal{E}$, let $\boldsymbol{s}_F$ and $\boldsymbol{e}_F$ be the two vertices of $F$ such that rotating the vector from $\boldsymbol{s}_F$ to $\boldsymbol{e}_F$ clockwise by 90 degrees aligns with $\boldsymbol{n}_{F}$.
For each $0 \le j \le k-1$, let
$L_{j,F}$ be the $j$th order Gauss-Legendre polynomial on $F$ such that $L_{j,F}(\be_F)=1$. 
Note that $L_{j,F}$ is an odd or even function when $j$ is odd or even on $F$. Hence, $L_{j,F}(\bs_F)=-1$ for odd $j$  and $L_{j,F}(\bs_F)=1$ for even $j$. 

For odd $k$, the set of degrees of freedom of $NC(\cT_h,k)$ (see Lemma 2.1 in \cite{ainsworth2008fully}) can be given by
\begin{equation}
	 \int_K v \,P_{j,K} \,dx, \quad j=1, \,\cdots, \,m_k
\end{equation} 
for all $K \in \cT_{h}$ and
\begin{equation}
	\int_F v \,L{_{j,F}} \,ds, \quad  j=0, \,\cdots,\, k-1
\end{equation}
for all $F \in \cE$.
For each $ K\in \cT_{h}$, and $i=1, \,\cdots, \, m_k$,
define the basis function $\phi_{i,K} \in NC(\cT_h,k)$ satisfying 
\begin{equation} \label{element-basis}
\left\{
	\begin{array}{lll}
	\int_{K'} \phi_{i,K} \,P_{j,K'} \,dx =  \delta_{ij}\delta_{KK'},  & \forall \,j=1,\, \cdots ,\,m_k, & \forall \, K' \in \cT_{h},\\[4mm]
	\int_{F} \phi_{i,K} \,L_{j,F} \,ds=0, & \forall \,j=0,\, \cdots,\, k-1, & \forall \,F \in \cE.
	\end{array}
	\right.
\end{equation} 
And for  each $ F \in \cE$, $i=0, \,\cdots, \, k-1$, define the basis function $\phi_{i,F} \in NC(\cT_h,k)$ satisfying
\begin{equation}\label{edge-basis}
\left\{
	\begin{array}{lll}
	\int_{K} \phi_{i,F} \,P_{j,K} \,dx=0, & \forall \,j=1, \,\cdots,\,m_k, & \forall \,K \in \cT_{h}, \\[4mm]
	\int_{F'} \phi_{i,F} \,L_{j,F'} \,ds = \delta_{ij} \delta_{_{FF'}},  & \forall \,j=0,\, \cdots,\, k-1, & \forall \,F' \in \cE.
	\end{array}
	\right.
\end{equation}
Then the NC finite element space is the space spanned by all these basis functions, i.e.,
\[
	NC(\cT_h,k) = \mbox{span}
	\left\{\phi_{i,K}:\, K \in \cT_{h}\right\}_{i=1}^{m_k} \oplus
	\mbox{span} \left\{\phi_{i,F}: \; F \in \cE \right\}_{i=0}^{k-1}.
\]

In what follows, we derive several locally conservative fluxes based on (\ref{nc-form}) for the NC solutions. 
\subsubsection{Case 1: $u_{h} \in NC(\cT_{h}, k = 2n+1), 
\hat \bsigma_{h} \in RT_f(\cT_h, k-1, k-1)$}

For NC space of any odd order $k$, (\ref{nc-form})  can be completely localized by testing with edge-based 
Gauss-Legendre basis functions denoted by $\{\phi_{i,F}^-, i=0, \cdots, k-1\}$ on each edge $F \in \cE \setminus \cE_{N}$.

Define  $\bsigma_{k-1}^{\Delta}  \in \mathring{RT}(\cT_h, k-1)$ such that
\begin{equation}\label{local__def}
\int_{F} \bsigma_{k-1}^{\Delta}  \cdot \bn_{F} L_{i,F} \,ds = { \| L_{i,F} \|_{F}^{2}} r(\phi_{i,F}^{-}),
	\quad \forall F\in \cE\setminus \cE_N, i = 0, \cdots, k-1.
\end{equation}

\begin{lemma}
	The $\bsigma_{k-1}^{\Delta} $ defined in \cref{local__def} satisfies that
\begin{equation}\label{compatbility}
\mathcal{A}(\bsigma_{k-1}^{\Delta}  ,v) = r(v) \quad \forall  v \subset DG(\cT_h,k-1). 
\end{equation}
Moreover,
we have
\begin{equation}\label{nc--recovery-a}
\hat\bsigma_h^{nc}: = \tilde\bsigma_{k-1}^{nc}+  \bsigma_{k-1}^{\Delta} \in RT_f(\cT_h, k-1, k-1), \quad 
\end{equation}
\end{lemma}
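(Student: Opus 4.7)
The plan is to reduce the identity $\mathcal{A}(\bsigma_{k-1}^{\Delta},v)=r(v)$ to the compatibility result \cref{lem:compatibility} by expanding $v$ element-wise in the NC Lagrange-type basis of $\mathbb{P}_k(K)$. First, since $\{L_{i,F}\}_{i=0}^{k-1}$ is an $L^{2}(F)$-orthogonal basis of $\mathbb{P}_{k-1}(F)$ and $\bsigma_{k-1}^{\Delta}\in\mathring{RT}(\cT_h,k-1)$, the defining relation \cref{local__def} together with the Neumann condition $\bsigma_{k-1}^{\Delta}\cdot\bn_F=0$ on $\cE_N$ is equivalent to the explicit Gauss--Legendre expansion
\[
\bsigma_{k-1}^{\Delta}\cdot\bn_F \;=\; \sum_{i=0}^{k-1} r(\phi_{i,F}^{-})\,L_{i,F}, \qquad F\in\cE\setminus\cE_N,
\]
from which
\[
\mathcal{A}(\bsigma_{k-1}^{\Delta},v) \;=\; \sum_{F\in\cE\setminus\cE_N}\sum_{i=0}^{k-1} r(\phi_{i,F}^{-})\int_F L_{i,F}\,\jump{v}\,ds.
\]

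Next, I would exploit the structure of the NC basis on a single element. Unisolvence of the NC degrees of freedom, valid for odd $k$, shows that the restrictions $\{\phi_{i,F}|_K:F\subset\partial K,\,0\le i\le k-1\}\cup\{\phi_{j,K}\}_{j=1}^{m_k}$ form a basis of $\mathbb{P}_k(K)$ with dual functionals $p\mapsto\int_F p\,L_{i,F}\,ds$ and $p\mapsto\int_K p\,P_{j,K}\,dx$. A short dimension count (using $L_{k,F}(\be_F)=1$ and $L_{k,F}(\bs_F)=-1$ for odd $k$ to rule out non-bubble traces) further identifies the kernel of the $3k$ edge moment functionals on $\mathbb{P}_k(K)$ with the $m_k$-dimensional bubble space $\lambda_1\lambda_2\lambda_3\,\mathbb{P}_{k-3}(K)$, so each interior basis function $\phi_{j,K}$ is in fact a bubble vanishing on $\partial K$. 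For any $v\in DG(\cT_h,k-1)\subset DG(\cT_h,k)$, expanding $v|_K\in\mathbb{P}_{k-1}(K)\subset\mathbb{P}_k(K)$ against this basis and pairing with the local strong residual gives
\[
r_K(v|_K) = \sum_{F\subset\partial K}\sum_{i=0}^{k-1} \Big(\int_F v|_K L_{i,F}\,ds\Big)\, r\bigl(\phi_{i,F}|_K\bigr) + \sum_{j=1}^{m_k} \Big(\int_K v|_K P_{j,K}\,dx\Big)\, r(\phi_{j,K}),
\]
where $\phi_{i,F}|_K$ and $\phi_{j,K}$ are interpreted as extended by zero outside $K$.

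The key observation, and the only real obstacle, is that each bubble $\phi_{j,K}$, viewed as a function on $\O$ by zero extension, belongs to $NC_{0,\Gamma_D}(\cT_h,k)$: its trace on every facet of $\cT_h$ vanishes identically, so the interior jump and Dirichlet moment conditions are automatically met. \Cref{lem:compatibility} then yields $r(\phi_{j,K})=0$, annihilating the interior contribution above. Applying the same lemma to the full edge basis function $\phi_{i,F}\in NC_{0,\Gamma_D}(\cT_h,k)$ for $F\in\cE_I$ produces the cancellation $r(\phi_{i,F}|_{K_F^-})+r(\phi_{i,F}|_{K_F^+})=0$, and for $F\in\cE_N$ (where $\phi_{i,F}$ is supported only in $K_F^-$) it gives $r(\phi_{i,F}^{-})=0$ directly.

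Finally, summing $r_K(v|_K)$ over $K$ and regrouping by facet, the two identities from compatibility together with $\jump{v}|_F=v|_{K_F^-}-v|_{K_F^+}$ on $\cE_I$ and $\jump{v}|_F=v|_{K_F^-}$ on $\cE_D$ convert the remaining edge double sum into exactly $\sum_{F\in\cE\setminus\cE_N}\sum_i r(\phi_{i,F}^{-})\int_F L_{i,F}\jump{v}\,ds$, matching the formula for $\mathcal{A}(\bsigma_{k-1}^{\Delta},v)$ obtained in the first step. This establishes \cref{compatbility}, and the equilibration statement \cref{nc--recovery-a} then follows immediately from \cref{lem:existence}.
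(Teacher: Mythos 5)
Your proof is correct and follows essentially the same route as the paper's: both expand $v$ elementwise in the dual NC basis, use the compatibility result \cref{lem:compatibility} to annihilate the interior contributions and to convert $r(\phi_{i,F}^{+})$ into $-r(\phi_{i,F}^{-})$, and then match the resulting facet sums against the moments defining $\bsigma_{k-1}^{\Delta}$ in \cref{local__def}, with \cref{lem:existence} giving \cref{nc--recovery-a}. The only cosmetic difference is that you justify $r(\phi_{j,K})=0$ through the (correct for odd $k$, but stronger than necessary) claim that the interior basis functions are bubbles with vanishing trace, whereas their defining moment conditions already place them in $NC_{0,\Gamma_D}(\cT_h,k)$ with support in a single element, which is all the compatibility argument requires.
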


\begin{proof}
Choosing  $v_{i,F} = \phi_{i,F}^{-}$ in \cref{nc-form},
we have
\begin{equation}\label{localization1}
\begin{split}
\mathcal{A}(\bsigma_{k-1}^{\Delta},\phi_{i,F}^{-}) \!=\!\left<\bsigma_{k-1}^{\Delta} \cdot \bn_F, {\phi_{i,F}^{-}} \right>_{F}  \!\!=\!\! \dfrac{1}{\|L_{i,F}\|_{F}^{2}}\left<\bsigma_{k-1}^{\Delta} \cdot \bn_F, L_{i,F}\right>_{F}
\!=\!
 r(\phi_{i,F}^{-}),
	 \end{split}
\end{equation}
where we have used the property of $\phi_{i,F}$ that
\[
	\int_{F} \phi_{i,F}^{-} p_{s} \,ds = \dfrac{1}{\|L_{i,F}\|_{F}^{2}} \int_{F} L_{i,F}p_{s} \,ds 
	\quad \forall p_{s} \in \mathbb{P}_{s}(F), s \le k-1.
\]
Similarly, choosing  $v_{i,F} = \phi_{i,F}^{+}$ in \cref{nc-form} and using \cref{localization1} yiled
\begin{equation}\label{localization1+}
\begin{split}
\mathcal{A}(\bsigma_{k-1}^{\Delta},\phi_{i,F}^{+}) \!=\! \!-\left(\bsigma_{k-1}^{\Delta} \cdot \bn_F, {\phi_{i,F}^{+}} \right)_{F}  \!=\!
-\left(\bsigma_{k-1}^{\Delta} \cdot \bn_F, {\phi_{i,F}^{-}} \right)_{F} 
\!=\!-
 r(\phi_{i,F}^{-}) \!=\! r(\phi_{i,F}^{+}).
	 \end{split}
\end{equation}
To prove \cref{compatbility}, it is sufficient to prove for any $v \in \mathbb{P}_{k-1}(K)$,
	$K \in \cT_{h}$.
	Without loss of generality, assume that $K \in \cT_{h}$ is an interior element.
For each $v \in \mathbb{P}_{k-1}(K)$, there exist 
$a_{{j,F}} $  and 
$a_{{j,K}} $
 such that
\[
	v =\sum_{F \in \cE_K} \sum_{j=0}^{k-1} a_{{j,F}} \,\phi_{j,F}|_{K} 
	+ \sum_{j=1}^{m_k} a_{{j,K}} \,\phi_{j,K} := \sum_{F \in \cE_K} v_{F,K} + v_K \mbox{ in }K.
\] 
By a direct computation, we have $ \mathcal{A}(\bsigma_{k-1}^{\Delta} , v_{K}) =0$.
Combining with the fact that $r(v_{K})=0$ and \cref{localization1}, \cref{localization1+}, we have
\begin{equation}
\begin{split}
\mathcal{A}(\bsigma_{k-1}^{\Delta} ,v) &= 
\sum_{F \in \cE_{K}} \sum_{j=0}^{k-1} a_{{j,F}}  \mathcal{A}(\bsigma_{k-1}^{\Delta} , \phi_{j,F}|_{K})
\\
&=
\sum_{F \in \cE_{K}} \sum_{j=0}^{k-1} a_{{j,F}} r(\phi_{j,F}|_{K}) 
= \sum_{F \in \cE_{K}} r(v_{F,K}) = r(v).
\end{split}
\end{equation}
This completes the proof of \cref{compatbility}.
\cref{nc--recovery-a} is a direct application of  \cref{lem:existence}.
	This completes the proof of the lemma.
\end{proof}

The recovered flux here is the same as \cite{cai2021generalized} and \cite{becker2016local} for arbitrary odd orders NC FEM solutoons. For the first order Crouzeix-Raviart FEM, the recovered flux above is the same to the one retrieved in \cite{dari1996posteriori, ainsworth2005robust, cai2021generalized}.

\subsubsection{Case 2: $u_{h} \in NC(\cT_{h}, k = 2n+1), n\ge 0, 
\hat \bsigma_{h} \in RT_f(\cT_h, 0)$} In some applications, it is sufficient to require that the conservative flux lies in the lowest order RT space. In the following lemma, we provide an explicitly derived conservative flux  in
$RT_{f}(\cT_h, 0)$  for NC solutions of odd orders.
\begin{lemma}\label{lem:nc-case2}
Define  $\bsigma_{0}^{\Delta}  \in RT(\cT_h, 0)$ such that
\begin{equation}\label{local__def-A}
	\int_{F} \bsigma_{0}^{\Delta}  \cdot \bn_{F} \,ds = |F| r(\phi_{F}^{-}),
	\quad \forall F\in \cE \setminus \cE_{N}.
\end{equation}
where $ r(v) = (f - \nabla \cdot \tilde \bsigma_{k-1}^{nc}, v)$.
Then we have $\hat \bsigma_{h} :=  \bsigma_{0}^{\Delta} +  \tilde \bsigma_{k-1}^{nc}$ satisfies
\begin{equation}\label{nc-2}
\int_{K} \nabla \cdot \hat \bsigma_{h} \,dx = \int_{K} f \,dx  
\quad \forall K \in \cT_{h}
\end{equation}
and
\begin{equation}\label{local__def-Aa}
	 \bsigma_{0}^{\Delta} +  \tilde \bsigma_{0}^{nc} \in RT_{f}(\cT_{h}, 0).
\end{equation}
\end{lemma}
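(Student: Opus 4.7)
The plan is to prove \cref{nc-2} via an element-wise identity for $r(\chi_K)$, then extend to \cref{local__def-Aa} by comparing zeroth-order moments. The key identity I would aim for is
\[
r(\chi_K) \;=\; \sum_{F \in \cE_K \setminus \cE_N} \mbox{sign}_K(F)\,|F|\, r(\phi_F^-),
\]
from which \cref{nc-2} follows immediately by the divergence theorem applied to $\bsigma_0^\Delta$ (using $\bsigma_0^\Delta\cdot\bn_F=0$ on $\cE_N$) together with the definition \cref{local__def-A}.

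To establish this identity, I would expand the constant $\chi_K|_K \equiv 1$ in the NC basis restricted to $K$. The Gauss--Legendre orthogonality $\int_F L_{j,F}\,ds = |F|\delta_{j0}$ forces the edge-basis contribution to come only from the $(0,F)$ mode with coefficient $|F|$, while the interior-bubble coefficients $\int_K P_{j,K}\,dx$ are absorbed by the compatibility \cref{eq:compatibility} since each $\phi_{j,K}$ lies in $NC_{0,\Gamma_D}(\cT_h,k)$. This reduces $r(\chi_K)$ to a sum of $|F|\, r(\phi_F^{\pm,K})$ over $F \in \cE_K$, where $\phi_F^{\pm,K}$ denotes the DG restriction of $\phi_{0,F}$ to $K$, i.e.\ $\phi_F^-$ when $K = K_F^-$ and $\phi_F^+$ otherwise.

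The remaining step is a case analysis to collapse $r(\phi_F^{\pm,K}) = \mbox{sign}_K(F)\, r(\phi_F^-)$. For $F \in \cE_I$, the membership $\phi_{0,F} = \phi_F^- + \phi_F^+ \in NC_{0,\Gamma_D}$ yields $r(\phi_F^+) = -r(\phi_F^-)$, matching the sign. For $F \in \cE_N$, the same argument plus the basis property $\int_{F'}\phi_{0,F} L_{j,F'}\,ds = \delta_{FF'}\delta_{j0}$ on $\cE_D$ places $\phi_{0,F}$ in $NC_{0,\Gamma_D}$, so $r(\phi_F^-) = r(\phi_{0,F}) = 0$, which is exactly why those terms may be dropped from the target sum. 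For $F \in \cE_D$, the identity is trivial since only $K = K_F^-$ is adjacent. Summing over $F \in \cE_K$ then closes \cref{nc-2}.

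Finally, I would derive \cref{local__def-Aa} by observing that \cref{rt:1:a} pins down the zeroth-order normal moment of $\tilde\bsigma_s(u_h)$ on every face independently of $s$; hence $\int_{\partial K}\tilde\bsigma_0^{nc}\cdot\bn_K\,ds = \int_{\partial K}\tilde\bsigma_{k-1}^{nc}\cdot\bn_K\,ds$, and \cref{nc-2} upgrades to the pointwise equality $\nabla\cdot(\bsigma_0^\Delta + \tilde\bsigma_0^{nc}) = f_0$, both sides being piecewise constant on $\cT_h$. The Neumann trace $g_{0,F}$ is immediate from $\bsigma_0^\Delta\cdot\bn_F = 0$ and $\tilde\bsigma_0^{nc}\cdot\bn_F = g_{0,F}$ on $\cE_N$. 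I expect the main obstacle to be the case analysis in the third paragraph: one must carefully track whether $\phi_{0,F}$ lies in $NC_{0,\Gamma_D}$, which toggles depending on which part of the boundary $F$ touches and drives the appearance of $\mbox{sign}_K(F)$ in the final formula.
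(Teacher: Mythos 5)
Your proposal is correct and follows essentially the same route as the paper: both expand the constant on $K$ into the edge Gauss--Legendre basis functions plus interior bubbles, use the compatibility $r(v)=0$ on $NC_{0,\Gamma_D}(\cT_h,k)$ to kill the bubble terms and to get $r(\phi_F^+)=-r(\phi_F^-)$ (and $r(\phi_F^-)=0$ on $\cE_N$), and then match against the prescribed facet moments of $\bsigma_0^{\Delta}$, concluding \cref{local__def-Aa} from the agreement of the zeroth-order normal moments of $\tilde\bsigma_0^{nc}$ and $\tilde\bsigma_{k-1}^{nc}$. The only difference is cosmetic: you inline \cref{lem:existence} by applying the divergence theorem directly and keep the $|F|$ coefficients explicit, whereas the paper verifies $\mathcal{A}(\bsigma_0^{\Delta},v)=r(v)$ for $v\in DG(\cT_h,0)$ and then invokes \cref{lem:existence}.
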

\begin{proof}
From \cref{lem:existence}, it is sufficient to prove
\begin{equation}\label{local-eq}
\mathcal{A}(\bsigma_0^{\Delta} ,v) = r(v) \quad \forall  v \in  DG(\cT_h,0).
\end{equation}
By the facts that
$L_{F} \equiv 1$, $\bsigma_0^{\Delta}  \cdot \bn_{F} $ is a constant on $F$, and $ \int_{F}{\phi_{F}} L_{F} \,ds =1$, we have  for any $F \in \cE_{K}$,
\begin{equation}\label{3.27}
\begin{split}
&\mathcal{A}(\bsigma_0^{\Delta} , \phi_{F}|_{K}) \!\!=\!\!
 \int_{F} \bsigma_0^{\Delta}  \cdot \bn_{F}
\jump{\phi_{F}|_{K}} \,ds = 
 \mbox{sign}_{K}(F) \bsigma_0^{\Delta}  \cdot \bn_{F} 
 \int_{F}{\phi_{F}} L_{F} \,ds \\
 =&  \mbox{sign}_{K}(F) |F|^{-1} \int_{F} \bsigma_0^{\Delta}  \cdot \bn_{F}  \,ds
  =  \mbox{sign}_{K}(F)r(\phi_{F}^{-}) 
 = r(\phi_{F}|_{K}).
\end{split}
\end{equation}
We can also observe that
\[
	\sum_{F \in \cE_{K}} {\phi_{F}|_{K}} \equiv 1 \quad \mbox{on } \partial K.
\]
Moreover, there exists a function $v_{K} \in \mbox{span}\left\{\phi_{i,K}\right\}_{i=1}^{m_k}$ such that
$\displaystyle\sum_{F \in \cE_{K}} {\phi_{F}|_{K}} + v_{K} \equiv 1$  in $\bar K$.
Finally, combining \cref{3.27} with the facts that $ r(v_{K})= \mathcal{A}(\bsigma_0^{\Delta} ,v_{K}) = 0$ yields
\begin{equation}
\mathcal{A}(\bsigma_0^{\Delta} ,1_{K}) = r(1_{K}) \quad \forall K \in \cT_{h},
\end{equation}
and, hence, \cref{local-eq}.

The proof for \cref{local__def-Aa} is immediate from \cref{nc-2} and the fact that
\[
\int_{K} \nabla \cdot  \tilde \bsigma_{0}^{nc} \,dx 
= \int_{K} \tilde \bsigma_{k-1}^{nc} \,dx \quad \forall K \in \cT_{h}.
\]
This completes the proof of the lemma.
\end{proof}

\begin{remark}
To the author's knowledge, the conservative flux  defined in \cref{local__def-A} is novel.
\end{remark}


\subsection{NC  FEM of order $2$:
 ($u_{h} \in NC(\cT_{h}, 2), 
\hat \bsigma_{h}^{nc} \in RT_f(\cT_h, 0)$)}

The NC spaces for even orders have dramatically different structures from those for odd orders. One critical challenge for even-order NC spaces is the non-existence of Gaussian-Legendre basis functions in 2D. Therefore, a direct localization similar to NC odd order is no longer available for even orders. Since the basis functions for even-order NC finite elements are defined differently depending on the order, we only apply the EARM to the second order  Fortin-Soulie NC FEM in this paper.

Define the following functions:
 \[
 	\phi_F = \underset{z \in \mathcal{N}_F}{ \Pi \lambda_{z}} \quad \mbox{and} 
	\quad \phi_K = \left(2 - 3\underset{z \in \mathcal{N}_K}{\sum} \lambda_z^2 \right)\Big|_K
 \]
 where $\lambda_z \in H^1(\O)$ is the barycentric piecewise linear basis function corresponding to $z \in \mathcal{N}$
 and $\cN_F (\cN_K)$ is the set of all vertices on $F (K)$.
 Here $\phi_K$ is the so-called bubble function that is only supported in $K$ and satisfies $\int_{\partial K}\phi_K p \,ds =0$ for all $p \in \mathbb{P}_1(K)$.
 The second order Fortin Soulie NC FEM \cite{fortin1983non} is defined as follows:
\[
	NC(\cT_h, 2) = \mbox{span}\{\lambda_z, \phi_F, \phi_K, z \in \cN_{h}, F \in \cE, K \in \cT_h \},
\]
where $\cN_{h}$ is the set of all verties on $\cT_{h}$.

We note that
{{\begin{equation}\label{space-relationship}
	\sum_{F \in \cE_K}\phi_F|_K + \phi_K \equiv 1. 
\end{equation}}}

\begin{lemma}\label{lem:nc-2}
Define  $\bsigma_0^{\Delta}  \in RT(\cT_h, 0)$ such that
\begin{equation}\label{local__def-B}
	\int_{F} \bsigma_0^{\Delta}  \cdot \bn_{F} \,ds =  6 \, r(\phi_{F}^{-}),
	\quad \forall F\in \cE \setminus \cE_{N},
\end{equation}
where $ r(v) = (f - \nabla \cdot \tilde \bsigma_{1}^{nc}, v)$. 
Then the recovered flux $\hat \bsigma_{h}: = \bsigma_0^{\Delta} +  \tilde \bsigma_{1}^{nc}$ satisfies
\[
\int_{K} \nabla \cdot \hat \bsigma_{h} \,dx = \int_{K} f \,dx  
\quad \forall K \in \cT_{h}.
\]
\end{lemma}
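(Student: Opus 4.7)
The plan is to follow the template of the proof of \cref{lem:nc-case2}: apply \cref{lem:existence} to reduce the claimed elementwise conservation to the variational identity $\mathcal{A}(\bsigma_0^\Delta, v) = r(v)$ for all $v \in DG(\cT_h, 0)$. Since $DG(\cT_h,0)$ is spanned by the elementwise characteristic functions, it then suffices to prove $\mathcal{A}(\bsigma_0^\Delta, 1_K) = r(1_K)$ for each $K \in \cT_h$.

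For the left-hand side, the constancy of $\bsigma_0^\Delta \cdot \bn_F$ on each facet, the identity $\jump{1_K}|_F = \mbox{sign}_K(F)$, the Neumann convention $\bsigma_0^\Delta \cdot \bn_F = 0$ on $\cE_N$, and the defining relation \cref{local__def-B} combine to give
\[
\mathcal{A}(\bsigma_0^\Delta, 1_K) \;=\; 6 \sum_{F \in \cE_K \setminus \cE_N} \mbox{sign}_K(F)\, r(\phi_F^-).
\]
For the right-hand side, I would expand the constant $1$ on $K$ via the local partition identity \cref{space-relationship}. Since the bubble $\phi_K$ is supported in $K$ with vanishing $\mathbb{P}_1$ moments on $\partial K$, it lies in $NC_{0,\Gamma_D}(\cT_h,2)$; likewise $\phi_F = \prod_{z \in \cN_F}\lambda_z$ vanishes on every facet other than $F$, so $\phi_F \in NC_{0,\Gamma_D}(\cT_h,2)$ whenever $F \notin \cE_D$. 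The compatibility \cref{eq:compatibility} then forces $r(\phi_K) = 0$ and $r(\phi_F) = 0$ for all non-Dirichlet $F$; the latter, combined with the two-element support of $\phi_F$, rewrites $\mbox{sign}_K(F)\, r(\phi_F^-)$ as $r_K(\phi_F|_K)$ on interior facets and gives $r(\phi_F^-) = 0$ on Neumann facets. For Dirichlet facets, $\phi_F$ is supported on the unique adjacent element $K_F^-$ and $\mbox{sign}_{K_F^-}(F) = 1$, so the same identification holds trivially. Reassembling the pieces yields $r(1_K) = \mathcal{A}(\bsigma_0^\Delta, 1_K)$ as required.

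The main obstacle is the normalization bookkeeping. The coefficient $6$ in \cref{local__def-B} is dictated by $\int_F \phi_F\, ds = |F|/6$, obtained by integrating the parabolic profile $t(1-t)$ on $F$, and it is this factor that makes the partition identity collapse with matching proportions. Unlike the odd-order setting of \cref{lem:nc-case2}, the Fortin--Soulie element admits no positive-degree Gauss--Legendre edge basis on the triangle---the NC bubble $\phi_K$ is introduced precisely to restore unisolvency---so the argument cannot be fully localized to individual edges and must instead invoke the global compatibility \cref{eq:compatibility} simultaneously for $\phi_K$ and for $\phi_F$ on non-Dirichlet facets.
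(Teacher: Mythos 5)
Your overall strategy is the paper's (reduce via \cref{lem:existence} to testing against $1_K$, use the facet constancy of $\bsigma_0^{\Delta}\cdot\bn_F$, the partition of unity on $K$, and the compatibility $r(\phi_K)=r(\phi_F)=0$), but the "reassembling" step has a genuine quantitative gap. Your left-hand side is computed directly and correctly: $\mathcal{A}(\bsigma_0^{\Delta},1_K)=\sum_{F\in\cE_K\setminus\cE_N}\mbox{sign}_K(F)\int_F\bsigma_0^{\Delta}\cdot\bn_F\,ds = 6\sum_{F\in\cE_K\setminus\cE_N}\mbox{sign}_K(F)\,r(\phi_F^-)$. Your right-hand side, expanded through \cref{space-relationship} with unit coefficients, gives $r(1_K)=\sum_{F\in\cE_K}r_K(\phi_F|_K)+r_K(\phi_K)=\sum_{F\in\cE_K\setminus\cE_N}\mbox{sign}_K(F)\,r(\phi_F^-)$, i.e.\ the two sides differ by exactly a factor of $6$, and the concluding sentence "reassembling the pieces yields $r(1_K)=\mathcal{A}(\bsigma_0^{\Delta},1_K)$" does not follow. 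The missing computation is the normalization you gesture at: since $\sum_{z\in\cN_K}\lambda_z^2 = 1-2\sum_{F\in\cE_K}\phi_F$ on $K$, the Fortin--Soulie bubble satisfies $\phi_K=-1+6\sum_{F\in\cE_K}\phi_F|_K$, so the identity actually available is $6\sum_{F\in\cE_K}\phi_F|_K-\phi_K\equiv 1$ (the coefficients in \cref{space-relationship} as printed are not the ones you need; check it at the barycenter, where $\sum_F\phi_F|_K+\phi_K=4/3$). With this corrected expansion and $r(\phi_K)=0$, your right-hand side becomes $r(1_K)=6\sum_{F\in\cE_K}r_K(\phi_F|_K)$, the factor $6$ reappears, and your argument closes; without it, the proof as written is internally inconsistent.

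Note also why the paper's arrangement is immune to this coefficient issue: it never evaluates $\mathcal{A}(\bsigma_0^{\Delta},1_K)$ directly, but instead verifies the termwise identities $\mathcal{A}(\bsigma_0^{\Delta},\phi_F|_K)=\mbox{sign}_K(F)\,r(\phi_F^-)=r(\phi_F|_K)$ (here the $6$ in \cref{local__def-B} cancels against $\int_F\phi_F\,ds=|F|/6$) and $\mathcal{A}(\bsigma_0^{\Delta},\phi_K)=r(\phi_K)=0$, and then only needs that $1_K$ lies in the span of $\{\phi_F|_K,\phi_K\}$ -- any expansion coefficients work by linearity. Your variant, which fixes the left-hand side by the direct jump computation $\jump{1_K}|_F=\mbox{sign}_K(F)$, makes the conclusion depend on the exact coefficients of the partition identity, so you must state and use the correct one. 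The rest of your write-up (the justification $r(\phi_F|_{K_F^+})=-r(\phi_F^-)$ via $\phi_F\in NC_{0,\Gamma_D}(\cT_h,2)$ for $F\notin\cE_D$, the vanishing of $r(\phi_F^-)$ on Neumann facets, and the Dirichlet-facet case) is correct and usefully makes explicit steps the paper leaves implicit.
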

\begin{proof}
From  \cref{lem:existence}, it is sufficient to prove
\begin{equation}\label{local-eq-A}
\mathcal{A}(\bsigma_0^{\Delta} ,v) = r(v) \quad \forall  v \in  DG(\cT_h,0).
\end{equation}
For any $K \in \cT_{h}$, we first have that $\mathcal{A}(\bsigma_0^{\Delta} ,\phi_{K}) = r(\phi_K) =0$.
Moreover, using the fact that $\int_{F} \phi_{F} \,ds = \dfrac{1}{6} |F|$  and $\bsigma_0^{\Delta}  \cdot \bn_{F}$ is a constant, we have
\begin{equation}
\begin{split}
\mathcal{A}(\bsigma_0^{\Delta} ,\phi_{F}|_{K}) &
= \int_{F} \bsigma_0^{\Delta}  \cdot \bn_{F}
\jump{\phi_{F}|_{K}} \,ds = 
  \dfrac{1}{6} \mbox{sign}_{K}(F)\int_{F} \bsigma_0^{\Delta}  \cdot \bn_{F} \,ds \\
 =&\mbox{sign}_{K}(F)r(\phi_{F}^{-}) = r(\phi_{F}|_{K}),
\end{split}
\end{equation}
which, combined with \cref{space-relationship}, proves \cref{local-eq-A}. 
This completes the proof of \cref{local-eq-A} and hence the lemma.

\begin{remark}
This recovered flux defined in \cref{{lem:nc-2}} is equivalent to the projection of the fluxes introduced in \cite{kim2012flux} onto the \( RT(\cT_{h},0) \) space. 
 \end{remark}

\end{proof}

\section{Applying EARM for Conforming FEM}\label{sec:5}

Define the $k$-th order ($k \ge 1$) conforming finite element spaces by
\[
CG(\cT_h,k) = \{v\in H^1(\O)\,:\,v|_K\in \mathbb{P}_k(K),\quad\forall\,\,K\in\cT_h\}
\]
and
\[
CG_{0,\Gamma_D}(\cT_h,k) = H_{0,\Gamma_D}^1(\O) \cap CG(\cT_h,k).
\]
The conforming finite element solution of order $k$ is to find $u_k^{cg} \in CG_{0,\Gamma_D}(\cT_h,k)$ such that
{{\begin{equation} \label{CG-solution}
	(A \nabla u_k^{cg}, \nabla v)_{\O}
	=(f,v)_\O-\left<g, v\right>_{\Gamma_N}, \quad \forall \, v  \in CG_{0,\Gamma_D}(\cT_h,k).
\end{equation} }}

First we have that for any $v \in CG_{0,\Gamma_D}(\cT_h,k)$, by the definition of $ \tilde \bsigma_{k-1}^{cg}:= \tilde \bsigma_{k-1} (u_k^{cg})$ and integration by parts, there holds
 \begin{equation}
 \begin{split}
&(f,v) - (\gradt \tilde \bsigma_{k-1}^{cg},v)
 =
    (f,v)-  (A \nabla u_{k}^{cg}, \nabla v) +
    \sum_{F \in \cE}( \{ A \nabla u_{k}^{cg}\cdot \bn_{F}\}_{w}^{F}, \jump{v})_{F} \\
    &
  =
    (f,v)  -  (A \nabla u_k^{cg}, \nabla v) -
    < g, v>_{\Gamma_{N}}=0.
    \end{split}
 \end{equation}
 
 We immediately have the following compatibility result for the conforming finite element solution.
 \begin{lemma}
 Let $\mathcal{A}(\bftau_h,v) := \underset{F \in \cE \setminus \cE_{N}}{ \sum } \left<\bftau_h \cdot \bn_F, \jump{v} \right>_F$ be a bilinear form defined on $\bftau_h \in \mathring{RT}(\cT_h, s) \times DG(\cT_{h},k), \,  s \le k-1$.
There holds
 \begin{equation}\label{compatibility-cg}
\begin{split}
	&\mathcal{A}(\bftau_h,v)= r(v) = 0, \quad\forall v \in CG_{0,\Gamma_D}(\cT_h,k). \quad
\end{split}
\end{equation}
where $r(v) = (f - \nabla \cdot \bsigma_{k-1}^{cg}, v)_{\O}$.
 \end{lemma}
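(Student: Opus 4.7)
The plan is to split the statement into two independent claims: (a) $r(v)=0$ for every $v\in CG_{0,\Gamma_D}(\cT_h,k)$, and (b) $\mathcal{A}(\bftau_h,v)=0$ for every such $v$ and every $\bftau_h\in\mathring{RT}(\cT_h,s)$. Claim (a) is essentially free: the display immediately preceding the lemma already carries out the integration-by-parts computation
\[
(f,v) - (\nabla\!\cdot\!\tilde\bsigma_{k-1}^{cg},v)
= (f,v) - (A\nabla u_k^{cg},\nabla v) + \sum_{F\in\cE}(\{A\nabla u_k^{cg}\!\cdot\!\bn_F\}_w^F,\jump{v})_F,
\]
and then invokes the CG variational equation \cref{CG-solution} together with the facet-boundary contributions of $\tilde\bsigma_{k-1}^{cg}$ (including $\tilde\bsigma\!\cdot\!\bn_F=g$ on $\cE_N$) to obtain zero. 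So I would simply cite that displayed identity and conclude $r(v)=(f,v)-(\nabla\!\cdot\!\tilde\bsigma_{k-1}^{cg},v)=0$.

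For claim (b) I would exploit $H^1$-conformity of the CG space. Since $v\in CG_{0,\Gamma_D}(\cT_h,k)\subset H^1(\O)$, $v$ is continuous across every interior facet, so $\jump{v}|_F=0$ for all $F\in\cE_I$. On Dirichlet facets $F\in\cE_D$ the convention $\jump{v}|_F=v|_F^-$ combined with $v|_{\Gamma_D}=0$ yields $\jump{v}|_F=0$ as well. Because the sum defining $\mathcal{A}$ runs over $\cE\setminus\cE_N=\cE_I\cup\cE_D$, every term in $\mathcal{A}(\bftau_h,v)$ vanishes facet by facet, independently of $\bftau_h$.

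There is really no hard step in this lemma: both halves follow almost immediately once the correct observation is made (a displayed identity for (a); conformity for (b)). The only bookkeeping to be careful about is the facet set partition $\cE=\cE_I\cup\cE_D\cup\cE_N$ and the two different meanings of $\jump{v}$ on interior versus boundary facets, plus making sure that the Neumann contribution is correctly absorbed into $r(v)$ via the identity $\tilde\bsigma_{k-1}^{cg}\!\cdot\!\bn_F=g$ on $\cE_N$ so that no term is missed. This contrasts with \cref{lem:compatibility} for NC spaces, where $\jump{v}$ does not vanish pointwise and one had to rely on the $L^2$-orthogonality of $\jump{v}$ to $\mathbb{P}_{k-1}(F)$ together with $\bftau_h\!\cdot\!\bn_F\in\mathbb{P}_s(F)$ with $s\le k-1$; here the algebraic work is unnecessary because continuity kills the jumps outright.
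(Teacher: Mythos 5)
Your proposal is correct and matches the paper's intended argument: the paper treats this lemma as immediate, with $r(v)=0$ coming from the integration-by-parts identity displayed just before the statement, and $\mathcal{A}(\bftau_h,v)=0$ following from the vanishing of $\jump{v}$ on $\cE_I\cup\cE_D$ for conforming functions vanishing on $\Gamma_D$ (the analogue of "the property of the NC space" used for \cref{lem:compatibility}). Your remark that $\bsigma_{k-1}^{cg}$ in the statement should be read as $\tilde\bsigma_{k-1}^{cg}$ is also consistent with the paper.
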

 
Similar to \cref{nc-compatbility},
we aim to solve $\bsigma_{s}^{\Delta} \in \mathring{RT}(\cT_{h},s)$ for the conforming finite element solutions such that
 \begin{equation}\label{-cg-problem}
\begin{split}
	&\mathcal{A}(\bsigma_{s}^{\Delta},v)=\underset{F \in \cE \setminus \cE_{N}}{ \sum } \left<\bsigma_{s}^{\Delta}, \cdot \bn_F, \jump{v} \right>_F = r(v) \quad\forall v \in DG(\cT_h,s). 
\end{split}
\end{equation}	
 
 \subsection{Deriving the equilibrated residual method}\label{sec:5.1}
 Before presenting the novel aspects of our approach, we demonstrate that the EARM can systematically recover the classical equilibrated residual method introduced in \cite{ainsworth2000posteriori}. This serves as a validation step, showing that our framework naturally reproduces well-established results. 
For direct comparison, we also consider the linear elements, i.e., $u_{1}^{cg} \in CG(\cT_{h},1)$ which is the same as in \cite{ainsworth2000posteriori}.
 
For each $z \in \cN$, define
$
	r_{z,K} = r(\lambda_{z}|_{K})
$
and
$\sigma_{z,F}^{\Delta} = \left<\bsigma_{s}^{\Delta} \cdot \bn_F, \lambda_{z}\right>_F$.
Testing $v = \lambda_{z}|_{K}$ in \cref{-cg-problem} for all $K \subset \omega_{z}$ yields
the following linear system in terms of the first-moment unknowns $\{\sigma_{z,F}^{\Delta}\}_{F \in \cE_{z}}$:
 \begin{equation}\label{local-monents}
\sum_{F \in \cE_{z} \cap \cE_{K} \setminus \cE_{N}} \mbox{sign}_{K}(F) \bsigma_{z,F}^{\Delta}  = r_{z,K},
\end{equation}
where $\cE_z$ denotes the set of all edges sharing $z$ and $\o_{z}$ is the star-patch including all elements sharing $z$ as a common vertex.
We note that the local patch problem in \cref{local-monents} is similar to (6.37) in \cite{ainsworth2000posteriori}, though derived differently.
More specifically,  the local patch problem in \cite{ainsworth2000posteriori} is equivalent to solving $\{\sigma_{z, F}\}_{F \in \cE_{z}}$, however, with a slightly more complex form, such that
 \begin{equation}
\sum_{F \in \cE_{z} \cap \cE_{K} \setminus \cE_{N}} \mbox{sign}_{K}(F) \bsigma_{z,F} =
 r_{z,K} + \sum_{F \in \cE_{z} \cap \cE_{K} \setminus \cE_{N}} \mbox{sign}_{K}(F) 
 \int_{F} \{ A \nabla u_{h}^{cg} \cdot \bn_{F}\}   \lambda_{z} \,ds.
\end{equation}
 This is because the recovered flux $\hat \bsigma_{h}$ is solved directly in \cite{ainsworth2000posteriori},
 whereas in our approach,  the local correction flux  $\bsigma_{k-1}^{\Delta}$ is  solved. 
 
From Table 6.1 in \cite{ainsworth2000posteriori}, there exist infinitely many solutions for the local system \cref{local-monents}, if $z$ is an interior vertex or if $z$ is on the boundary and shared by two facets $F \in \cE_{D}$. To resolve this, an extra constraint is imposed in \cite{ainsworth2000posteriori}, and then the Lagrangian multiplier method is used to solve the local patch problems. Finally, an assembling based on the first moments is required to solve for the recovered flux $\hat\bsigma_{h}$.
 
In the following subsections, we introduce two enhancements to the equilibrated residual method based on the EARM. These eliminate the need for constrained minimization via the Lagrangian method and bypass the final assembly step.
\subsection{Restricting $\bsigma_{s}^{\Delta}$ to the orthogonal complement of divergence-free space}\label{subsec:restriction}
Recall that \cref{nc-form} has infinitely many solutions given the kernel of divergence-free space.
In this subsection, we derive a  recovery method that applies a natural constraint, i.e., restricts the correction  in the orthogonal complement of divergence-free space. Such a constraint results in solving for the correction  in a different yet simple space without the need for a Lagrangian multiplier. 

Denote the divergence-free space as
 \[\mathring{RT}^{0}(\cT_{h},s) = \{\bftau \in \mathring{RT}(\cT_{h},s): \nabla \cdot \bftau =0  \}.\]
Also denote by $\mathring{RT}^{0}(\cT_{h},s)^{\perp}$ the orthogonal complement of the divergence free space $\mathring{RT}^{0}(\cT_{h},s)$.

%

  Now  for any $w \in DG(\cT_{h},s)$, define a mapping $\mathbf{S}:  w \in DG(\cT_{h},s) \to$
  $\mathbf{S}(w) \in \mathring{RT}(\cT_{h},s)$ such that
  \beq \label{dg-}
  \mathbf{S}(w) \cdot \bn_{F}|_{F} = A_{F} h_{F}^{-1}\jump{w}|_F \quad \forall F \in {\cE \setminus \cE_{N}},
  \eeq
  where $A_{F} = \min(\a_{F}^{+}, \a_{F}^{-})$ when $F \in \cE_{I}$ and $A_{F} =\a_{F}^{-}$ when $F \in \cE_{D}$.
\begin{lemma}\label{lem:perp-space}
The following relationship holds:
 \[
 	\{ \mathbf{S}(w): w \in DG(\cT_{h},s) \} \subset \mathring{RT}^{0}(\cT_{h},s)^{\perp}.
	\]
\end{lemma}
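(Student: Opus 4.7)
My plan has two stages: first check that $\mathbf{S}$ really maps into $\mathring{RT}(\cT_h,s)$, and then verify the $L^2(\O)$-orthogonality $(\mathbf{S}(w),\bftau)_{\O}=0$ for every $\bftau\in\mathring{RT}^0(\cT_h,s)$. Well-definedness is direct: the prescribed trace $A_Fh_F^{-1}\jump{w}|_F$ is a single-valued polynomial of degree at most $s$ on each $F\in\cE\setminus\cE_N$ (since $\jump{w}$ is intrinsic to $F$), vanishes on $\cE_N$ by convention, and is combined with zero interior moments built into the definition of $\mathring{RT}$; single-valuedness of normal traces across interior facets delivers $H(\mathrm{div};\O)$-conformity.

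For the orthogonality, my main tool is the identity obtained from elementwise integration by parts together with the zero-interior-moment condition: for any $\bm{\sigma}\in\mathring{RT}(\cT_h,s)$ and $v\in DG(\cT_h,s)$,
\[
(\gradt\bm{\sigma},v)_{\O} \;=\; -\sum_{K}(\bm{\sigma},\nabla v)_K + \sum_{F\in\cE}\int_F \bm{\sigma}\cdot\bn_F\jump{v}\,ds \;=\; \sum_{F\in\cE\setminus\cE_N}\int_F \bm{\sigma}\cdot\bn_F\jump{v}\,ds,
\]
because $\nabla v|_K\in\mathbb{P}_{s-1}(K)^d$ kills the volume term via the zero-interior-moment condition on $\bm{\sigma}$, and $\bm{\sigma}\cdot\bn_F=0$ on $\cE_N$ prunes the facet sum. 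Specializing to $\bm{\sigma}=\bftau\in\mathring{RT}^0$ with $\gradt\bftau=0$ then yields the facet-level identity $\sum_{F\in\cE\setminus\cE_N}\int_F \bftau\cdot\bn_F\jump{v}\,ds=0$ for every $v\in DG(\cT_h,s)$.

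To upgrade this facet identity to the desired volume orthogonality, I plan to realize $\mathbf{S}(w)$ as the image of some $w^{\ast}\in DG(\cT_h,s)$ under the $L^2$-adjoint $D^{\ast}$ of the divergence $D=\gradt\colon \mathring{RT}(\cT_h,s)\to DG(\cT_h,s)$; since $\mathrm{range}(D^{\ast})=\ker(D)^\perp=\mathring{RT}^0(\cT_h,s)^\perp$, any such realization immediately concludes the proof. Because each $\bm{\sigma}\in\mathring{RT}(\cT_h,s)$ is determined by its facet normal-trace DOFs (interior moments being zero), the functional $\bm{\sigma}\mapsto(\bm{\sigma},\mathbf{S}(w))_{\O}$ can be represented uniquely in the form $\sum_{F\in\cE\setminus\cE_N}\int_F \bm{\sigma}\cdot\bn_F\,\lambda_F\,ds$ for certain $\lambda_F\in\mathbb{P}_s(F)$ depending on $w$; producing $w^{\ast}\in DG(\cT_h,s)$ with $\jump{w^{\ast}}|_F=\lambda_F$ and then testing the facet-level identity above at $\bm{\sigma}=\bftau$ gives $(\mathbf{S}(w),\bftau)_{\O}=\sum_F\int_F\bftau\cdot\bn_F\jump{w^{\ast}}\,ds=0$.

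The principal obstacle I anticipate is verifying that the $\{\lambda_F\}$ arising from $\mathbf{S}(w)$ are genuinely realizable as jumps of a $DG(\cT_h,s)$ function, i.e., satisfy the cocycle/vertex-matching constraints that cut out $\jump{DG}$ inside $\prod_F\mathbb{P}_s(F)$. I expect this to hinge on an elementwise unwinding of the $\mathring{RT}$-basis Gram structure paired against $\mathbf{S}(w)$, exploiting the special form $A_Fh_F^{-1}\jump{w}|_F$ of the prescribed normal data---the weighting being chosen precisely so that the induced facet data $\{\lambda_F\}$ inherits the DG jump structure from $w$ itself.
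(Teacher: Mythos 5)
Your plan goes wrong exactly at the step you flag as the ``principal obstacle,'' and that obstacle is fatal rather than technical: the facet data $\{\lambda_F\}$ representing the functional $\bsigma\mapsto(\bsigma,\mathbf{S}(w))_{\O}$ on $\mathring{RT}(\cT_h,s)$ are in general \emph{not} realizable as the jumps of any function in $DG(\cT_h,s)$, and the weight $A_Fh_F^{-1}$ cannot repair this, because the elementwise $L^2$ Gram matrix of the facet-based basis of $\mathring{RT}(K,s)$ couples the different facets of $K$, so the $\lambda_F$ mix contributions from neighboring facets and lose the jump structure. In fact the statement you set out to prove --- genuine $L^2(\O)$-orthogonality of $\mathbf{S}(w)$ to $\mathring{RT}^{0}(\cT_h,s)$ --- fails. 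Take $s=0$, $A=I$, $\O=(0,1)^2$ split into $K_1$ with vertices $(0,0),(1,0),(1,1)$ and $K_2$ with vertices $(0,0),(1,1),(0,1)$, pure Dirichlet boundary, and $w=1$ on $K_1$, $w=0$ on $K_2$. Then $\mathbf{S}(w)=(-2+3x,\,-1+3y)$ on $K_1$ and $(-x,\,1-y)$ on $K_2$ (its normal traces are $1$ on the two boundary edges of $K_1$, $0$ on those of $K_2$, and $1/\sqrt{2}$ across the diagonal, as \cref{dg-} prescribes), while $\bftau=(\partial_y\lambda_z,\,-\partial_x\lambda_z)$ with $\lambda_z$ the hat function at $z=(0,0)$ equals $(0,1)$ on $K_1$ and $(-1,0)$ on $K_2$, is divergence free and belongs to $RT(\cT_h,0)=\mathring{RT}(\cT_h,0)$. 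A direct computation gives $(\mathbf{S}(w),\bftau)_{\O}=0+\tfrac16\neq 0$. Hence no realization $\mathbf{S}(w)=D^{*}w^{*}$ with $w^*\in DG(\cT_h,s)$ can exist, and the adjoint-range strategy cannot be completed.

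What the paper proves (and what is actually used later, e.g.\ for the well-posedness of \cref{b-correction-A}) is weaker and never touches the $L^2(\O)$ pairing. Your second paragraph already contains the paper's identity \cref{kernel3}: for $\bftau\in\mathring{RT}(\cT_h,s)$, divergence freedom is equivalent to $\mathcal{A}(\bftau,v)=\sum_{F\in\cE\setminus\cE_N}\int_F\bftau\cdot\bn_F\jump{v}\,ds=0$ for all $v\in DG(\cT_h,s)$. The paper then argues in one line: if $\mathbf{S}(w)$ itself satisfies $\mathcal{A}(\mathbf{S}(w),v)=0$ for all such $v$, testing with $v=w$ gives $\sum_{F}A_Fh_F^{-1}\|\jump{w}\|_F^2=0$, hence $\mathbf{S}(w)\equiv0$; that is, the image of $\mathbf{S}$ meets the divergence-free space only at $0$. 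Equivalently, the orthogonality asserted in the lemma is to be read with respect to the facet-weighted pairing $\langle\bsigma,\btau\rangle:=\sum_{F\in\cE\setminus\cE_N}A_F^{-1}h_F\int_F(\bsigma\cdot\bn_F)(\btau\cdot\bn_F)\,ds$, for which $\langle\mathbf{S}(w),\bftau\rangle=\mathcal{A}(\bftau,w)=0$ is immediate from \cref{kernel3} --- not with respect to the $L^2(\O)$ inner product. Keeping your step giving the facet identity and replacing the adjoint-range construction by this coercivity/duality observation recovers the intended proof; your well-definedness check of $\mathbf{S}$ is fine and can stay.
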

\begin{proof}
We first observe that for any $\bftau_h \in \mathring{RT}^{0}(\cT_{h},s)$, there holds
 \begin{equation}\label{kernel3}
0 = (\nabla  \cdot \bftau_h,  v)_{\cT_{h}}  \Leftrightarrow
 \underset{F \in \cE\setminus \cE_{N}}{ \sum } \left<\bftau_h \cdot \bn_F, \jump{v} \right>_F =\mathcal{A}(\bftau_h,v) =0 \quad \forall v \in DG(\cT_{h},s).
 \end{equation}
 Thus, $\bftau_h \in  \mathring{RT}^{0}(\cT_{h},s)^{\perp}$ if and only if
    \begin{equation}\label{sufficientCondition}
\mathcal{A}(\bftau_h,v) =0 \quad \forall v \in DG(\cT_{h},s) \Rightarrow \bftau_h \equiv 0.
 \end{equation}
Therefore, to prove that $\mathbf{S}(w) \in  \mathring{RT}^{0}(\cT_{h},s)^{\perp}$, it is sufficient to prove that $\mathbf{S}(w)$ satisfies \cref{sufficientCondition} for all $w \in DG(\cT_{h},s)$.
Assuming
   \begin{equation}
   \mathcal{A}(\mathbf{S}(w),v) =0 \quad \forall v \in DG(\cT_{h},s),
 \end{equation}
immediately yields
    \begin{equation}
   \mathcal{A}(\mathbf{S}(w),w)  =0,
 \end{equation}
i.e., $\|A_{F}^{1/2} h_{F}^{-1/2}\jump{w}\|_{\cE\setminus \cE_{N}} = 0$, hence, $\| \mathbf{S}({w}) \cdot \bn_{F} \|_{\cE\setminus \cE_{N}} \equiv 0$. This completes the proof of the lemma.
\end{proof}

Inspired by \cref{lem:perp-space}, we reform the problem \cref{nc-form} by restricting the  in the space of $\mathring{RT}^{0}(\cT_{h},s)^{\perp}$   as following:
finding $u_s^{\Delta} \in DG(\cT_h, s) $ such that
\begin{equation}\label{b-correction}
	\mathcal{A}( \jump{u_s^{\Delta}}, v)
= r(v) \quad \forall \, v \in DG(\cT_h,s),
\end{equation}
where
\[
\mathcal{A}( \jump{u_s^{\Delta}}, \jump{v}) 
:=\sum_{F \in \cE \setminus \cE_{N}} \int_{F}  A_{F}h_{F}^{-1} \jump{u_s^{\Delta}} \jump{v} \,ds.
\]

Note that only the information of $\jump{u_{s}^{\Delta}}$ on facets $\cE \setminus \cE_{N}$ is used in the formulation. We further
define for any $s \ge 1$ the quotient spaces:
\[
	 DG^{0}(\cT_h, s) = \{v \in DG(\cT_h, s): v|F=0 \, \forall F \in \cE_{N} \} / \{v \in  CG(\cT_h, s), v|_{\cE} =0\}.
\]
Here we have used $A/B$ to denote the quotient space of $A$ by $B$.
 \cref{b-correction} is then equivalent to finding $u_s^{\Delta} \in DG^{0}(\cT_h, s) $ such that
\begin{equation}\label{b-correction-A} 
	\mathcal{A}( \jump{u_s^{\Delta}}, \jump{v})_{F}
= r(v) \quad \forall \, v \in DG^{0}(\cT_h,s).
\end{equation}

One feature of \cref{b-correction-A} is the well-posedness thanks to the same trial and test spaces along with continuity and coercivity in the space of $DG^{0}(\cT_h,s)$.

\begin{lemma}
	\cref{b-correction-A} has a unique solution $u_s^{\Delta} \in DG^{0}(\cT_h, s)$ for all integers $0 \le s \le k-1$.
\end{lemma}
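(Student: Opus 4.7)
The plan is to view \cref{b-correction-A} as a Lax--Milgram / Riesz representation problem on the finite-dimensional space $DG^{0}(\cT_h,s)$ equipped with the inner product induced by $\mathcal{A}(\jump{\cdot},\jump{\cdot})$. The whole argument reduces to three items: (i) symmetry and continuity of the bilinear form; (ii) that it descends from a seminorm on the pre-quotient space to a genuine norm on $DG^{0}(\cT_h,s)$; and (iii) that the right-hand side $r(\cdot)$ is well defined on the quotient. Once these are established, existence and uniqueness follow from the Riesz representation theorem in finite dimensions.

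Item (i) is immediate: $\mathcal{A}(\jump{u},\jump{v}) = \sum_{F \in \cE\setminus\cE_N} A_F h_F^{-1} \int_F \jump{u}\jump{v}\,ds$ is manifestly symmetric and non-negative, and Cauchy--Schwarz applied facet by facet yields continuity together with the estimate $|\mathcal{A}(\jump{u},\jump{v})|^2 \le \mathcal{A}(\jump{u},\jump{u})\,\mathcal{A}(\jump{v},\jump{v})$.

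For item (ii), I would characterize the kernel of the seminorm $\|v\|_{*}^{2} := \mathcal{A}(\jump{v},\jump{v})$ on the pre-quotient space $\widetilde V_s := \{v \in DG(\cT_h,s) : v|_F = 0,\, F \in \cE_N\}$. Any $v \in \widetilde V_s$ with $\|v\|_{*}=0$ has $\jump{v}|_F = 0$ on every $F \in \cE_I \cup \cE_D$, which forces $v$ to be continuous across interior facets and to vanish on $\Gamma_D$; combined with $v|_{\cE_N}=0$, this places $v$ in the space of continuous piecewise polynomials vanishing on the entire boundary skeleton, which is precisely the subspace being factored out in the definition of $DG^{0}(\cT_h,s)$. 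Consequently $\|\cdot\|_{*}$ drops to a norm on $DG^{0}(\cT_h,s)$ and the bilinear form becomes a positive-definite inner product on this finite-dimensional space; coercivity then holds with constant one.

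Item (iii) uses the compatibility identity \cref{compatibility-cg}: any representative $v$ of the zero coset lies in $CG_{0,\Gamma_D}(\cT_h,s) \subset CG_{0,\Gamma_D}(\cT_h,k)$ (recalling $s \le k-1$), so $r(v)=0$, and hence $r$ descends to a bounded linear functional on $DG^{0}(\cT_h,s)$. Applying Riesz representation in the finite-dimensional Hilbert space $(DG^{0}(\cT_h,s),\mathcal{A}(\jump{\cdot},\jump{\cdot}))$ then yields the unique $u_s^{\Delta}$. I expect the main obstacle to be item (ii): one has to carefully match the kernel of the jump seminorm with the precise subspace appearing in the quotient, tracking how the vanishing-on-$\cE_N$ constraint interacts with the zero-jump conditions across interior and Dirichlet facets so that no spurious kernel element remains.
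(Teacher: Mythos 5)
Your proposal is correct and follows essentially the same route as the paper, whose proof simply asserts continuity and coercivity of $\mathcal{A}(\jump{\cdot},\jump{\cdot})$ on $DG^{0}(\cT_h,s)$ under the norm $\tri \cdot \tri$ and invokes Lax--Milgram. Your additional steps---identifying the kernel of the jump seminorm as the continuous piecewise polynomials vanishing on all of $\partial\O$ (which is indeed how the quotient must be understood for coercivity to hold, the paper's literal ``$v|_{\cE}=0$'' in the definition of $DG^{0}$ notwithstanding) and verifying via the compatibility identity \cref{compatibility-cg} that $r$ descends to the quotient---supply exactly the details the paper leaves implicit.
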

\begin{proof}
We first have that the bilinear form $	\mathcal{A}( \jump{u_s^{\Delta}}, \jump{v})_{F}$ is continuous and coercive in the space of $DG^{0}(\cT_h, s)$ under the norm:
\beq \label{tri-norm}
\tri v \tri =  \sqrt{\sum_{F \in \cE\setminus \cE_{N}} h_{F}^{-1}\| A_{F}^{1/2}\jump{v}\|_{F}^{2}}.
\eeq Then, by the Lax-Milgram theorem, \cref{b-correction-A}  has a unique solution for all $0 \le s \le k$. 
\end{proof}
\begin{remark}
When $s=0,$ our method coincides with the equilibrated flux  recovered in \cite{odsaeter2017postprocessing}.
	Though the solution of \cref{b-correction-A} is unique, it renders a global problem. However, the number of degrees of freedom for the global problem is much less than the original problem, as only the degree of freedom on the boundary is needed.
	
Moreover, for \(1 \leq s \leq k-1\), we can localize it using the same techniques as in \cite{becker2016local} by replacing the exact integrals, \(\int_{F} A_{F}h_{F}^{-1} \jump{u_s^{\Delta}} \jump{v} \,ds\), with an inexact Gauss-Lobatto quadrature. In two dimensions, this involves employing \(s\) points on each edge, which ensures exactness up to order \(2s-1\).
With Gauss-Lobatto quadrature in two dimensions, our recovered flux coincides with that in \cite{becker2016local}. Furthermore, the recovered flux can be computed entirely explicitly, as shown in \cite{capatina2024robust}.

\end{remark}

\begin{lemma}\label{lem:sigma-hat-1}
Define
\begin{equation}\label{recover--cg}
	\hat \bsigma_h^{cg} = \tilde \bsigma_{k-1}^{cg} + \bsigma_s^{\Delta},
\end{equation}
where $\bsigma_s^{\Delta} =\mathbf{S}({u_{s}^{\Delta}})$ in which $u_{s}^{\Delta}$ is the solution to \cref{b-correction-A} for some $ 0 \le s \le k-1$.
The recovered flux defined in \cref{recover--cg} satisfies
	\begin{equation}
		(\nabla \cdot \hat \bsigma_h^{cg}, v) = (f,v) \quad \forall v \in DG(\cT_{h},s).
	\end{equation}
\end{lemma}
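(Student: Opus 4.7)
The plan is to verify the identity globally via elementwise integration by parts, reducing it to the variational equation \eqref{b-correction-A} for $u_s^{\Delta}$. I begin by splitting
\[
(\nabla \cdot \hat\bsigma_h^{cg}, v) - (f,v)
= -r(v) + \sum_{K \in \cT_h} (\nabla \cdot \bsigma_s^{\Delta}, v)_K,
\]
using the definition of $r(v)$ for the conforming setting.

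Next I treat the correction term elementwise. Integration by parts gives
\[
(\nabla \cdot \bsigma_s^{\Delta}, v)_K = -(\bsigma_s^{\Delta}, \nabla v)_K + \left<\bsigma_s^{\Delta} \cdot \bn_K, v\right>_{\partial K}.
\]
Because $\bsigma_s^{\Delta} \in \mathring{RT}(\cT_h, s)$, the volume term vanishes: for $s \ge 1$ this follows from the defining orthogonality of $\mathring{RT}$ against $\mathbb{P}_{s-1}(K)^d$ (observing that $\nabla v \in \mathbb{P}_{s-1}(K)^d$), and for $s = 0$ the gradient itself is zero. Reassembling the facial contributions using the jump convention together with the condition $\bsigma_s^{\Delta} \cdot \bn = 0$ on $\cE_N$ (again from $\mathring{RT}$), I obtain
\[
\sum_K (\nabla \cdot \bsigma_s^{\Delta}, v)_K = \sum_{F \in \cE \setminus \cE_N} \left<\bsigma_s^{\Delta} \cdot \bn_F, \jump{v}\right>_F = \mathcal{A}(\bsigma_s^{\Delta}, v),
\]
which, via the definition $\bsigma_s^{\Delta} \cdot \bn_F = A_F h_F^{-1}\jump{u_s^{\Delta}}$ from \eqref{dg-}, rewrites as $\mathcal{A}(\jump{u_s^{\Delta}}, \jump{v})$.

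The variational equation \eqref{b-correction-A} then yields $\mathcal{A}(\jump{u_s^{\Delta}}, \jump{v}) = r(v)$ for every $v \in DG^{0}(\cT_h, s)$, which immediately closes the identity on that subspace. The main obstacle is extending the conclusion to a generic $v \in DG(\cT_h, s)$, whose trace on $\cE_N$ may be nonzero and therefore lie outside the numerator of the quotient $DG^{0}$. The remedy rests on two complementary observations: $\mathcal{A}(\bsigma_s^{\Delta}, \cdot)$ is blind to the Neumann trace of $v$ because $\bsigma_s^{\Delta} \cdot \bn = 0$ on $\cE_N$, while the Neumann contribution to $r(v)$ is already absorbed by the normal-trace specification \eqref{rt:1:a} of $\tilde\bsigma_{k-1}^{cg}$. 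Decomposing $v$ into a $DG^{0}$-representative plus a Neumann-supported remainder, and verifying the two pieces separately, completes the proof.
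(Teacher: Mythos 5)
Your opening computation is exactly the paper's argument: elementwise integration by parts, elimination of the volume term through the $\mathring{RT}(\cT_h,s)$ orthogonality (trivially for $s=0$), omission of the Neumann facets because $\bsigma_s^{\Delta}\cdot\bn_F=0$ there, and identification of the remaining facet sum with $\mathcal{A}(\jump{u_s^{\Delta}},\jump{v})$ via \cref{dg-}. Up to that point you are on the paper's track.

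The gap is the final step. You correctly observe that \cref{b-correction-A} only tests against $DG^{0}(\cT_h,s)$, whose members vanish on $\cE_N$, while the lemma asks for the identity for every $v\in DG(\cT_h,s)$; but the remedy you sketch does not close this. Writing $v=v_0+v_N$ with $v_0$ a $DG^{0}$-representative, the remainder $v_N$ must still satisfy $\mathcal{A}(\jump{u_s^{\Delta}},\jump{v_N})=r(v_N)$, and neither of your two observations delivers that: the fact that $\mathcal{A}(\bsigma_s^{\Delta},\cdot)$ is blind to Neumann traces says nothing about the value of $r(v_N)$, and the normal-trace condition \cref{rt:1:a} for $\tilde\bsigma_{k-1}^{cg}$ only fixes the boundary flux to be $g$ on $\Gamma_N$; it does not make the volume residual $(f-\nabla\cdot\tilde\bsigma_{k-1}^{cg},v_N)_K$ match anything. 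Concretely, take $s=0$ and an element $K$ with a facet on $\Gamma_N$: a constant vanishing on one facet of $K$ vanishes identically, so $v_0=0$ and $v_N=1_K$, and the required identity is precisely the local conservation statement on $K$ — an equation that is simply absent from \cref{b-correction-A} and is not implied by the definition of $\tilde\bsigma_{k-1}^{cg}$. The clean way to repair the step is to invoke the unrestricted formulation \cref{b-correction}, whose test space is all of $DG(\cT_h,s)$: its bilinear form has kernel $CG_{0,\Gamma_D}(\cT_h,s)$, on which $r$ vanishes by the compatibility \cref{compatibility-cg}, so a solution exists, and $\mathbf{S}(u_s^{\Delta})$ is unchanged by kernel contributions; with that version of the variational equation the identity holds for every $v\in DG(\cT_h,s)$ with no decomposition needed. (For what it is worth, the paper's own proof also tests with arbitrary $v\in\mathbb{P}_s(K)$ without addressing the Neumann-trace restriction, so you have put your finger on a genuine subtlety near $\Gamma_N$ — but your patch, as written, is an assertion rather than a proof, and it cannot be completed using \cref{b-correction-A} alone.)
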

\begin{proof}
	It is sufficient to prove the local conservation for any $v \in \mathbb{P}_{s}(K) \subset DG(\cT_h,s)$.
	By the definitions, integration by parts, \cref{b-correction-A} 
	\begin{equation}
	\begin{split}
		( \nabla \cdot \hat\bsigma_h^{dg}, v)_{K} 
		&= 
		(\nabla \cdot  \tilde\bsigma_{k-1}^{cg}, v)_{K} +
		(\nabla \cdot   \bsigma_h^{\Delta}, v)_{K} \\
		&= 
		(\nabla \cdot  \tilde\bsigma_{k-1}^{cg}, v)_{K} +
		( \bsigma_h^{\Delta},  \nabla v)_{K} +
		(  \bsigma_h^{\Delta} \cdot \bn_{F}, \jump{v})_{\partial K} \\
		&= 
		(\nabla \cdot  \tilde\bsigma_{k-1}^{cg}, v)_{K} +
		\sum_{F \in \cE_{K} \setminus \cE_{N}}(  \jump{u_{s}^{\Delta}}, \jump{v})_{F} 
		=  (\nabla \cdot  \tilde\bsigma_{k-1}^{cg}, v)_{K} + r(v) \\
		&= (\nabla \cdot  \tilde\bsigma_{k-1}^{cg}, v)_{K}  +  (f,v)_K
		- (\gradt \tilde \bsigma_{k-1}^{cg},v)_{K}  =  (f,v)_{K}.
		\end{split}
	\end{equation}
	We have completed the proof of the lemma.
\end{proof}

\subsection{Localization via Partial POU}\label{subsec:localization}

In this subsection, we propose a patch localization in which non-constrained local problems in star patches must be solved for the conforming methods. 

Define
\[
	\mathring{RT}_{0,\partial \o_z}(\omega_z,s) = \{ \btau_h \in \mathring{RT}(\omega_z,s),  \btau_h \cdot \bn_F = 0 \, \forall F \in \partial \omega_z. \}
\]

Firstly we define local problems: finding  $\bsigma_z^{\Delta}  \in \mathring{RT}_{0,\partial \o_z}(s, \omega_z)$, $0 \le s \le k-1$, for each $z \in \cN$ such that

\begin{equation}\label{local-pro-z}
\mathcal{A}(\bsigma_z^{\Delta} ,v) 
	= r(v \lambda_z) \quad \forall v \in DG(\o_{z},s),
\end{equation}
where
\[
\mathcal{A}(\bsigma_z^{\Delta} ,v) = \sum_{F \in \cE_{z} \setminus \cE_{N}}
\int_{F} \bsigma_z^{\Delta} \cdot \bn_{F} \jump{v} \,ds,
\] 
and
$ r(v \lambda_z)  = (f - \nabla \cdot \tilde \bsigma_{k-1}^{cg}, v \lambda_{z })_{\O}$.

We then define 
\begin{equation}\label{-POU-a}
\bsigma_s^{\Delta} =  \sum_{z \in \cN} \bsigma_z^{\Delta}.
\end{equation}

\begin{lemma}\label{lem:local-unity}
The flux defind in \cref{-POU-a} satisfies
\begin{equation}
\mathcal{A}( \bsigma_s^{\Delta}, v) =  r(v) \quad \forall v \in DG(\cT_{h},s).
\end{equation}
\end{lemma}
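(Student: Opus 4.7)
The strategy relies on two facts: (i) the barycentric basis functions form a partition of unity, $\sum_{z\in\cN}\lambda_z \equiv 1$ on $\bar\O$, and (ii) each local correction $\bsigma_z^{\Delta}$ has zero normal trace on $\partial\o_z$, so its extension by zero to all of $\O$ still belongs to $\mathring{RT}(\cT_h,s)$, and only the interior facets of the patch $\o_z$ contribute to $\mathcal{A}(\bsigma_z^{\Delta},\cdot)$.

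First, I would use the linearity of $\mathcal{A}$ in its first argument together with the definition of $\bsigma_s^{\Delta}$ in \cref{-POU-a} to write
\[
\mathcal{A}(\bsigma_s^{\Delta},v) \;=\; \sum_{z\in\cN} \mathcal{A}(\bsigma_z^{\Delta},v) \quad \forall v \in DG(\cT_h,s).
\]
Next, I would observe that since $\bsigma_z^{\Delta}\in \mathring{RT}_{0,\partial\o_z}(\omega_z,s)$, its extension by zero gives $\bsigma_z^{\Delta}\cdot\bn_F = 0$ on every facet $F\notin \cE_z$, so
\[
\mathcal{A}(\bsigma_z^{\Delta},v) = \sum_{F\in\cE_z\setminus \cE_N}\int_F \bsigma_z^{\Delta}\cdot\bn_F\,\jump{v}\,ds,
\]
which depends only on the restriction $v|_{\o_z} \in DG(\o_z,s)$. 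Invoking the local problem \cref{local-pro-z} with test function $v|_{\o_z}$ then yields $\mathcal{A}(\bsigma_z^{\Delta},v) = r(v\lambda_z)$, where $r$ is interpreted globally since $\lambda_z$ has support in $\bar\o_z$.

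Finally, summing over $z$ and using linearity of $r(\cdot)$ together with the partition of unity,
\[
\sum_{z\in\cN} r(v\lambda_z) \;=\; r\!\Bigl(v\sum_{z\in\cN}\lambda_z\Bigr) \;=\; r(v),
\]
completing the proof. The only subtlety—and the step I would want to verify carefully—is the bookkeeping on boundary facets: vertices $z\in\cN\cap\Gamma_D$ and $z\in\cN\cap\Gamma_N$ must all be included so that $\sum_z\lambda_z=1$ holds up to the boundary, and one should confirm that the condition $\bsigma_z^{\Delta}\cdot\bn_F=0$ on $F\in\cE_N\cap\cE_z$ (inherited from the space $\mathring{RT}$) is consistent with how Neumann facets are excluded in the definition of $\mathcal{A}$. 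Once this is checked, the argument reduces to the partition-of-unity identity above.
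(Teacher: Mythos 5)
Your proposal is correct and follows essentially the same route as the paper: decompose $\mathcal{A}(\bsigma_s^{\Delta},v)$ by linearity, note that only the patches containing the support of $v$ (equivalently, only facets in $\cE_z$, since $\bsigma_z^{\Delta}$ vanishes on $\partial\o_z$) contribute, invoke the local problems \cref{local-pro-z}, and conclude with the partition of unity $\sum_z\lambda_z\equiv 1$. The only cosmetic difference is that the paper first restricts to $v$ supported on a single element $K$ and uses $\sum_{z\in\cN_K}\lambda_z=1$ on $K$, whereas you sum over all vertices globally; the boundary bookkeeping you flag is indeed consistent, since $\cE_N$ is excluded both in $\mathcal{A}$ and by the zero normal-trace condition in $\mathring{RT}$.
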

\begin{proof}
It is sufficient to prove that for any $K \in \cT_{h}$, there holds
\begin{equation}
\mathcal{A}( \bsigma_s^{\Delta}, v) =  r(v) \quad \forall v \in DG(K,s).
\end{equation}
	Since $v \in DG(K,s)$ implies that $\mathcal{A}\left( \bsigma_z^{\Delta}, v\right) =0$ for all $z \not \in \cN_{K}$, we have
	\begin{equation}
	\begin{split}
\mathcal{A}( \bsigma_s^{\Delta}, v) =  
 \sum_{z \in \cN_{K}}   \mathcal{A}\left( \bsigma_z^{\Delta}, v\right).
\end{split}
\end{equation}
Obviously, we also have $v \in DG(\o_{z},s)$ for all  $z \in \cN_{K}$, and hence,
	\begin{equation}
	\begin{split}
\mathcal{A}( \bsigma_s^{\Delta}, v) =  
 \sum_{z \in \cN_{K}}   \mathcal{A}\left( \bsigma_z^{\Delta}, v\right) =
  \sum_{z \in \cN_{K}}   r(v \lambda_z) =
 r(v).
\end{split}
\end{equation}
This completes the proof of the lemma.
\end{proof}

\begin{remark}
	\cref{local-pro-z} is a slight variation of the classical equilibrated residual method for which $r(v \lambda_{z})$ is replaced by $r(v)$. The advantage of using \cref{local-pro-z} is that it  solves the  directly instead of moments of the  in the classical equilibrated residual method, avoiding the final assembling.
\end{remark}
We now analyze the local patch problem in \cref{local-pro-z} specifically for $s=1, k \ge 2$ in the two dimensions.
The results of the other cases in two dimensions are similar.

\begin{lemma}\label{lem:local-unity-solutions}
When $z \in \cN_{I}$ or $z \in \Gamma_{D}$ and shared by two Dirichlet boundary edges,
the local problem in \cref{local-pro-z}  for $s=1$ has infinitely many solutions, with a one-dimensional kernel.
When $z \in \partial \O$ and $z$ is shared by at least one boundary edge on the Neumann boundary, \cref{local-pro-z} has a unique solution.
\end{lemma}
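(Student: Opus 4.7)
The plan is to reduce the local variational problem \cref{local-pro-z} to a simple divergence pairing via integration by parts, then use a rank--nullity count to pin down the kernel dimension case by case.

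First, I would apply elementwise integration by parts on $\omega_z$. For $\bsigma_z^\Delta \in \mathring{RT}_{0,\partial\omega_z}(\omega_z,1)$ and $v \in DG(\omega_z,1)$, summing $\int_{\partial K}\bsigma_z^\Delta\cdot\bn_K\,v\,ds = (\nabla\cdot\bsigma_z^\Delta,v)_K + (\bsigma_z^\Delta,\nabla v)_K$ over $K \subset \omega_z$ and discarding facets on which $\bsigma_z^\Delta\cdot\bn$ is forced to vanish (the outer patch-boundary facets and, via the $\mathring{RT}$ definition, the Neumann facets in $\cE_z$) gives
\[
\mathcal{A}(\bsigma_z^\Delta,v) = (\nabla\cdot\bsigma_z^\Delta,v)_{\omega_z} + (\bsigma_z^\Delta,\nabla_h v)_{\omega_z}.
\]
The second term vanishes because $\nabla_h v$ is piecewise constant and the interior-moment condition in $\mathring{RT}(\omega_z,1)$ forces $(\bsigma_z^\Delta,\bpsi)_K=0$ for all $\bpsi\in\mathbb{P}_0(K)^d$. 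Hence $\mathcal{A}(\bsigma_z^\Delta,v) = (\nabla\cdot\bsigma_z^\Delta,v)_{\omega_z}$ for every admissible $v$.

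Second, since $\nabla\cdot\bsigma_z^\Delta \in DG(\omega_z,1)$, the kernel of $\bsigma_z^\Delta\mapsto\mathcal{A}(\bsigma_z^\Delta,\cdot)$ is precisely the divergence-free subspace of $\mathring{RT}_{0,\partial\omega_z}(\omega_z,1)$. By rank--nullity its dimension equals $\dim\mathring{RT}_{0,\partial\omega_z}(\omega_z,1) - \dim DG(\omega_z,1) + \dim V_0$, where
\[
V_0 = \{v\in DG(\omega_z,1) : \mathcal{A}(\bsigma,v)=0\ \forall\,\bsigma\}.
\]
Reading off the facet integrals, $V_0$ is the space of continuous piecewise $\mathbb{P}_1$ functions on $\omega_z$ that additionally satisfy $v|_F=0$ on every Dirichlet facet in $\cE_z$, since on those facets $\bsigma_z^\Delta\cdot\bn_F$ remains a free degree of freedom.

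Third, let $N$ denote the number of triangles in $\omega_z$. A direct vertex-and-facet count shows that $\dim \mathring{RT}_{0,\partial\omega_z}(\omega_z,1)$ equals twice the number of facets with unconstrained normal trace, while $\dim V_0$ is the number of patch vertices minus the vanishing constraints on Dirichlet facets in $\cE_z$. For an interior vertex, this gives $2N$ versus $N+1$, yielding kernel dimension $1$; for a Dirichlet-corner vertex, the vanishing at $z$ and the two outer Dirichlet endpoints removes three vertex values, giving $2N+2$ versus $N-1$ and again kernel dimension $1$; for a Neumann-adjacent vertex (either one or two Neumann edges at $z$), the Neumann facet is excluded from the sum in $\mathcal{A}$, which exactly balances the accounting and leaves a trivial kernel. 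Finally, to establish ``infinitely many solutions'' rather than inconsistency in the first two cases, I would check the compatibility $r(v\lambda_z)=0$ for $v\in V_0$: such $v$ vanishes on every Dirichlet edge of $\cE_z$, so $v\lambda_z$ is continuous on $\Omega$ (extended by zero), piecewise $\mathbb{P}_2$, and vanishes on $\Gamma_D$; hence $v\lambda_z\in CG_{0,\Gamma_D}(\cT_h,k)$ for $k\ge 2$, and Galerkin orthogonality together with the definition of $\tilde\bsigma_{k-1}^{cg}$ yields $r(v\lambda_z)=0$ exactly as in the derivation at the start of \cref{sec:5}.

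The main obstacle is the careful bookkeeping distinguishing Dirichlet facets in $\cE_z$ (which retain free flux DOFs and hence add a $v=0$ constraint to $V_0$) from Neumann facets (excluded from $\mathcal{A}$ and hence invisible to $V_0$). Getting these two counts right is what yields the trichotomy between the interior, Dirichlet-corner, and Neumann-adjacent vertices.
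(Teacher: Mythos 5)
Your proof is correct, but it follows a genuinely different route from the paper. The paper argues by direct equation counting: it tests \cref{local-pro-z} with the hat functions $\lambda_z|_K$ and $\lambda_{z'}|_{K_{z'}^{\pm}}$, uses the compatibility identities $r(\lambda_z^2)=0$ and $r(\lambda_z\lambda_{z'})=0$ (valid for $k\ge 2$) to discard dependent equations, and then compares the number of remaining equations (whose linear independence is asserted) with the $2|\cE_z|$, $2(|\cE_z|-2)$, etc.\ unknowns in each vertex configuration. You instead integrate by parts elementwise to get $\mathcal{A}(\bsigma_z^{\Delta},v)=(\nabla\cdot\bsigma_z^{\Delta},v)_{\omega_z}$ (legitimate for $s=1$ because $\nabla v$ is piecewise constant and the interior moments of $\mathring{RT}$ vanish), and then apply rank--nullity to the bilinear form, identifying the right kernel $V_0$ as the continuous piecewise linears on $\omega_z$ vanishing on the Dirichlet edges of $\cE_z$; the facet-by-facet dimension counts you report ($2N$ vs.\ $N+1$, $2N+2$ vs.\ $N-1$, and the balanced Neumann cases) are correct and reproduce the paper's conclusions. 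What your approach buys is that linear independence never has to be asserted: once $V_0$ is identified, the kernel dimension follows mechanically, and solvability reduces to the single statement $r(v\lambda_z)=0$ for $v\in V_0$, which you correctly obtain from $v\lambda_z\in CG_{0,\Gamma_D}(\cT_h,k)$, $k\ge 2$, and the Galerkin orthogonality at the start of \cref{sec:5}. Two small caveats: (i) existence is also needed in the ``unique solution'' (Neumann-adjacent) cases, not only when the kernel is nontrivial; your compatibility argument applies verbatim there, so you should invoke it for all cases rather than only ``the first two''; (ii) your identification of $V_0$ relies on reading $\mathring{RT}_{0,\partial\omega_z}$ so that the normal traces on Dirichlet facets in $\cE_z$ remain free degrees of freedom (only the outer facets and Neumann facets are constrained) --- this matches the paper's count of $2|\cE_z|$ unknowns in the Dirichlet-corner case and is indeed the interpretation needed for the lemma to hold, but it is worth stating explicitly.
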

\begin{proof}
	
	We firstly analyze the case when $z$ is an interior vertex, i.e., $z \in \cN_{I}$. 
	We denote by $\cT_z$ the set of elements sharing $z$ as a common vertex in $\o_z$. We also let $|\cT_z|$  and $|\cE_z|$be the number of elements and edges in 
	$|\cT_z|$  and $|\cE_z|$, respectively.
	We note that  $\mbox{dim}(DG(\o_{z},1)) =3 |\cT_z|$, $\mbox{dim}(\mathring{RT}_{0,\partial \o_z}(\omega_z,s)) =2 |\cE_z| $ and $|\cT_z| = |\cE_z|$  in this case. 
	We firstly test $v = \lambda_z|_K$ for each $K \in \cT_z$
	in \cref{local-pro-z}, which yields $|\cT_z|$ equations:
		\begin{equation}
 \mathcal{A}(\bsigma_z^{\Delta} ,\lambda_z|_K) 
=r(v \lambda_z|_{K}).
\end{equation}
	However, we also have the following compatibility equation when $k \ge 2$:
	\begin{equation}
\sum_{K \in \cT_z} \mathcal{A}(\bsigma_z^{\Delta} ,\lambda_z|_K) 
= \mathcal{A}(\bsigma_z^{\Delta} ,\lambda_z) 
	= r(\lambda_z^2) =0.
\end{equation}
The last equation holds since $\lambda_z^2 \in CG(\cT_{h}, k\ge 2)$.
Therefore, testing on $\lambda_z|_K$ for each $K \in \cT_z$ yields $|\cE_z| -1$ independent linear equations.

We now consider each vertex $z' \in \partial \o_z \cap \cN$.
Let $K_{z'}^{+}$ and $K_{z'}^{-}$ denotes the two elements sharing $z'$ and $z$ as common vertices.
Similarly, we can test in \cref{local-pro-z} with $v = \lambda_{z'}|_K$ with $K = K_{z'}^{+}$ or $K = K_{z'}^{-}$ which yields two equations.
However, we also have the following compatibility:
	\begin{equation}
 \mathcal{A}(\bsigma_z^{\Delta} ,\lambda_{z'}|_{ K_{z'}^{-}})  +
  \mathcal{A}(\bsigma_z^{\Delta} ,\lambda_{z'}|_{ K_{z'}^{+}}) =
    \mathcal{A}(\bsigma_z^{\Delta} ,\lambda_{z'}) =0
  	\end{equation}
	and
	\begin{equation}	  
	 r(\lambda_{z'}|_{ K_{z'}^{-}} \lambda_z)  +  r(\lambda_{z'}|_{ K_{z'}^{+}} \lambda_z)
	=  r(\lambda_{z'} \lambda_z)   =0.
	\end{equation}
Thus on each $z' \in \partial \o_z \cap \cN$ there yields one linear equation for 
$\bsigma_z^{\Delta}$, i.e.,
		\begin{equation}
 \mathcal{A}(\bsigma_z^{\Delta} ,\lambda_{z'}|_{K_{z'}^{-}}) 
=r(\lambda_{z} \lambda_{z'}|_{K_{z'}^{-}}) \quad \forall z' \in \partial \o_z \cap \cN.
\end{equation}
Note that the number of vertices on $\partial \o_z$ is equal to $|\cE_z|$.
Therefore, in total, we have $2|\cT_z|-1$ linearly independent equations. In summary, with $2|\cE_z|$ unknowns, the linear local problem in \cref{local-pro-z} has a one-dimensional kernel for $z \in \cN_{I}$. 

Considering the case when $z \in \partial \O$ and $\partial \o_{z}
\subset \Gamma_{N}$. Note that  $|\cT_z| +1 = |\cE_z|$.  Since $\bsigma_z^{\Delta} \cdot \bn_{F} =0$ on $\Gamma_{N}$,
we have that $\mbox{dim}(\mathring{RT}_{0,\partial \o_z}(\omega_z,s)) =2 (|\cE_z| -2) $. Similarly, testing  $v = \lambda_z|_K$ for each $K \in \cT_z$ yields  $|\cT_z|-1 = |\cE_{z}| -2$ equations. Testing  $v = \lambda_{z'}|_{K_{z'}^{-}}$ for each $z' \in \partial (\o_z \cap \cN) \setminus \Gamma_N$ yields  another linearly independent
$ |\cE_z| -2$ equations. Therefore, in this case, there is a unique solution.

Considering the case when $z \in \cE_D$, i.e., $z \in \partial \O$ and $z$ is shared by two facets in $\cE_{D}$. 
Similarly, testing  $v = \lambda_z|_K$ for each $K \in \cT_z$ yields  $|\cT_z| = |\cE_z|-1$ equations. There is no compatibility for this case. Testing  $v = \lambda_{z'}|_{K_{z'}^{-}}$ 
for each $z \in \cN \cap \partial \o_z$ yields 
$ |\cE_z|$ equations. Therefore,   with $2|\cE_z|$ unknowns,  there is also one-dimensional kernel in this case.

Finally, when $z \in \partial \O$, and $z$ is shared by a Dirichlet facet and a Neumann facet,  we can also prove that  \cref{local-pro-z} has a unique solution by a similar proof.
This completes the proof of the lemma.
\end{proof}

\begin{theorem}
	When the solution of \cref{local-pro-z} is not unique, let $
	\bsigma_z^{\Delta}  \in \mathring{RT}_{0,\partial \o_z}(s, \omega_z)$ be a specific solution to \cref{local-pro-z}  and let
	$\bsigma_z^{\#} \in \mathring{RT}_{0,\partial \o_z}(s, \omega_z)$ be a nontrivial solution that satisfies 
	\begin{equation}\label{local-pro-z0}
\mathcal{A}(\bsigma_z^{\#} ,v) 
	= 0 \quad \forall v \in DG(\o_{z},s)
	\quad
	\mbox{and} \quad
	\|A^{-1/2} \bsigma_{z}^{\#}\|_{\o_{z}}=1.
\end{equation}
	Define $ \bsigma_z^{\Delta,*}$ such that
	 \begin{equation}\label{min-property}
	 \bsigma_z^{\Delta,*}   =\underset{\lambda \in \mathbb{R}}{\operatorname{argmin}}
	 \| A^{-1/2} ( \bsigma_z^{\Delta} - \lambda \bsigma_z^{\#})  \|_{\o_{z}}.
	 \end{equation}
	 Then 
	 \begin{equation}
\mathcal{A}( \bsigma_s^{\Delta,*}, v) =  r(v) \quad \forall v \in DG(\cT_{h},s),
\end{equation}
where
\begin{equation}\label{-cg-null-correction}
\bsigma_s^{\Delta,*} =  \sum_{z \in \cN^{*}} \bsigma_z^{\Delta,*} + 
\sum_{z \in \cN \setminus \cN^{*}} \bsigma_z^{\Delta} 
=  \sum_{z \in \cN^{*}} ( \bsigma_z^{\Delta} - \lambda_{z} \bsigma_z^{\#}) 
+
\sum_{z \in \cN \setminus \cN^{*}} \bsigma_z^{\Delta},
\end{equation}
$\cN^{*}$ is the set of interior vertices and vertices on the boundary sharing by two Dirichlet boundaries, and $\lambda_{z} ={(A^{-1}  \bsigma_z^{\Delta},  \bsigma_z^{\#})_{\o_{z}}}$.
\end{theorem}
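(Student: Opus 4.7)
The plan is to decompose the theorem into a routine quadratic minimization that identifies $\lambda_z$, followed by a linearity argument showing that the resulting $\bsigma_z^{\Delta,*}$ remains a valid solution to the local patch problem \cref{local-pro-z}, after which the global identity follows directly from \cref{lem:local-unity}. The key conceptual observation is that, by \cref{lem:local-unity-solutions}, at each $z \in \cN^{*}$ the solution set of \cref{local-pro-z} is a one-dimensional affine subspace $\bsigma_z^{\Delta} + \mathbb{R}\,\bsigma_z^{\#}$ of $\mathring{RT}_{0,\partial \o_z}(s,\omega_z)$, so any minimum-norm choice within this subspace is still a solution.

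First I would compute the closed-form of $\lambda_z$. The objective
\[
J(\lambda) := \|A^{-1/2}(\bsigma_z^{\Delta} - \lambda \bsigma_z^{\#})\|_{\o_z}^{2}
= \|A^{-1/2}\bsigma_z^{\Delta}\|_{\o_z}^{2}
- 2\lambda\,(A^{-1}\bsigma_z^{\Delta},\bsigma_z^{\#})_{\o_z}
+ \lambda^{2}\|A^{-1/2}\bsigma_z^{\#}\|_{\o_z}^{2}
\]
is strictly convex quadratic in $\lambda$, and setting $J'(\lambda)=0$ together with the normalization $\|A^{-1/2}\bsigma_z^{\#}\|_{\o_z}=1$ from \cref{local-pro-z0} immediately yields the unique minimizer $\lambda_z = (A^{-1}\bsigma_z^{\Delta},\bsigma_z^{\#})_{\o_z}$. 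This gives the first representation $\bsigma_z^{\Delta,*} = \bsigma_z^{\Delta} - \lambda_z\,\bsigma_z^{\#}$ and hence the equivalence of the two expressions for $\bsigma_s^{\Delta,*}$ displayed in \cref{-cg-null-correction}.

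Next I would verify that $\bsigma_z^{\Delta,*}$ continues to solve the local patch problem \cref{local-pro-z}. Since $\bsigma_z^{\#}$ lies in the same space $\mathring{RT}_{0,\partial \o_z}(s,\omega_z)$ and satisfies the homogeneous identity $\mathcal{A}(\bsigma_z^{\#},v)=0$ for all $v\in DG(\omega_z,s)$ by \cref{local-pro-z0}, linearity of $\mathcal{A}$ gives
\[
\mathcal{A}(\bsigma_z^{\Delta,*},v) = \mathcal{A}(\bsigma_z^{\Delta},v) - \lambda_z\,\mathcal{A}(\bsigma_z^{\#},v) = \mathcal{A}(\bsigma_z^{\Delta},v) = r(v\lambda_z)
\quad \forall v\in DG(\omega_z,s).
\]
Therefore $\bsigma_z^{\Delta,*}$ is a member of $\mathring{RT}_{0,\partial \o_z}(s,\omega_z)$ that solves \cref{local-pro-z} exactly like $\bsigma_z^{\Delta}$ does, and so the replacement of $\bsigma_z^{\Delta}$ by $\bsigma_z^{\Delta,*}$ for $z\in \cN^{*}$ does not alter the per-vertex identities. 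For $z\notin\cN^{*}$, \cref{lem:local-unity-solutions} asserts uniqueness, so $\bsigma_z^{\Delta}$ is retained unchanged.

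Finally, because the assembled flux $\bsigma_s^{\Delta,*}$ from \cref{-cg-null-correction} satisfies the same vertex-wise local equations as the original $\bsigma_s^{\Delta}$ from \cref{-POU-a}, the proof of \cref{lem:local-unity} carries over verbatim --- it only invokes the identity $\mathcal{A}(\bsigma_z^{\bullet},v) = r(v\lambda_z)$ together with the partition of unity $\sum_{z\in\cN_K}\lambda_z\equiv 1$ on each $K$, and is insensitive to which specific representative of the solution set is chosen at each vertex with a nontrivial kernel. This produces the desired global identity $\mathcal{A}(\bsigma_s^{\Delta,*},v) = r(v)$ for all $v\in DG(\cT_h,s)$. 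The only place where care is required is the first step (checking that the minimizer lands at $\lambda_z = (A^{-1}\bsigma_z^{\Delta},\bsigma_z^{\#})_{\o_z}$), but this is a textbook one-dimensional least-squares calculation enabled by the unit normalization of $\bsigma_z^{\#}$, so no genuine obstacle arises.
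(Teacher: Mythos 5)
Your proposal is correct and follows essentially the same route as the paper, whose proof simply cites \cref{lem:local-unity} and \cref{lem:local-unity-solutions}; you have merely spelled out the routine steps (the one-dimensional least-squares identification of $\lambda_{z}$ via the unit normalization of $\bsigma_z^{\#}$, the linearity argument showing $\bsigma_z^{\Delta,*}$ still solves \cref{local-pro-z}, and the reuse of the partition-of-unity assembly). No gaps.
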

\begin{proof} The theorem follows directly from the results established in \cref{lem:local-unity} and \cref{lem:local-unity-solutions}. \end{proof}
\begin{remark}
Even though $\bsigma_z^{\Delta,*}$ satisfies the minimization property in  \cref{min-property}, the solution can be sought explicitly as in \cref{-cg-null-correction}. 
In \cref{appen-pouprogramming}, we also show that the solution for $\bsigma_s^{\Delta,*} $ can be explicitly computed in two dimensions.
\end{remark}


\begin{lemma}\label{lem:sigma-hat-cg2}
Define
\begin{equation} 
	\hat \bsigma_h^{cg} = \tilde \bsigma_{k-1}^{cg} + \bsigma_s^{\Delta,*} ,
\end{equation}
where $\bsigma_h^{\Delta,*}$ is  defined in \cref{-cg-null-correction}.
The recovered flux defined in \cref{recover--cg} satisfies
	\begin{equation}
		(\nabla \cdot \hat \bsigma_h^{cg}, v) = (f,v) \quad \forall v \in DG(\cT_{h},s).
	\end{equation}
\end{lemma}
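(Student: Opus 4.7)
The plan is to reduce everything to local element-wise conservation and then invoke the global equilibrium identity $\mathcal{A}(\bsigma_s^{\Delta,*}, v) = r(v)$ for all $v \in DG(\cT_h,s)$ that was just established in the preceding theorem. Since both sides of the desired identity are piecewise polynomial and $v \in DG(\cT_h,s)$ is arbitrary, I would fix an element $K \in \cT_h$ and an arbitrary $v \in \mathbb{P}_s(K)$, extended by zero outside $K$, and argue that $(\nabla \cdot \hat \bsigma_h^{cg}, v)_K = (f,v)_K$.

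The main step would mirror the chain of manipulations in \cref{lem:sigma-hat-1}. Specifically, I would split $(\nabla \cdot \hat \bsigma_h^{cg}, v)_K = (\nabla \cdot \tilde \bsigma_{k-1}^{cg}, v)_K + (\nabla \cdot \bsigma_s^{\Delta,*}, v)_K$ and integrate by parts on the second term. The interior contribution $(\bsigma_s^{\Delta,*}, \nabla v)_K$ vanishes because each local correction $\bsigma_z^{\Delta,*} = \bsigma_z^{\Delta} - \lambda_z \bsigma_z^{\#}$ lies in $\mathring{RT}_{0,\partial \omega_z}(s,\omega_z)$, whose defining interior-moment condition is preserved under summation, so that $\bsigma_s^{\Delta,*} \in \mathring{RT}(\cT_h,s)$ and is therefore $L^2$-orthogonal to $\nabla v \in \mathbb{P}_{s-1}(K)^d$. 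The boundary contribution, using the zero-extension of $v$ across facets, collapses into $\mathcal{A}(\bsigma_s^{\Delta,*},v)$.

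At this point the theorem supplies $\mathcal{A}(\bsigma_s^{\Delta,*}, v) = r(v) = (f - \nabla \cdot \tilde \bsigma_{k-1}^{cg}, v)_K$, and the $\nabla \cdot \tilde \bsigma_{k-1}^{cg}$ terms cancel, leaving $(f,v)_K$ as desired. The argument is essentially a verbatim re-run of the proof of \cref{lem:sigma-hat-1}, with the only substantive change being the provenance of the equilibrium identity (now from the partial POU construction in \cref{local-pro-z} together with the null-space adjustment \cref{-cg-null-correction}, rather than from the global jump-based problem \cref{b-correction-A}).

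I do not anticipate a genuine obstacle. The one bookkeeping check that deserves care is verifying that $\bsigma_s^{\Delta,*}$ indeed belongs to $\mathring{RT}(\cT_h,s)$ after the null-space subtraction, which reduces to observing that both $\bsigma_z^{\Delta}$ and $\bsigma_z^{\#}$ are chosen from the same local space $\mathring{RT}_{0,\partial \omega_z}(s,\omega_z)$, so their extensions by zero preserve the vanishing interior-moment property globally. Once this membership is confirmed, the theorem from the preceding paragraph does all the real work and the lemma follows.
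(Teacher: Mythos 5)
Your proposal is correct and follows essentially the same route as the paper, which simply notes that the proof is analogous to that of \cref{lem:sigma-hat-1}: split off the correction, integrate by parts, use membership in $\mathring{RT}(\cT_h,s)$ to kill the interior term, and invoke the equilibrium identity $\mathcal{A}(\bsigma_s^{\Delta,*},v)=r(v)$ from the preceding theorem. Your extra check that $\bsigma_s^{\Delta,*}\in\mathring{RT}(\cT_h,s)$ (zero normal trace on each $\partial\omega_z$ and preserved interior moments under the null-space subtraction) is exactly the bookkeeping the paper leaves implicit.
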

\begin{proof}
	The proof is similar to \cref{lem:sigma-hat-1}.
\end{proof}

\section{Automatic Global Reliability}\label{sec:6}
We first cite the following reliability result proved in \cite{cai2021generalized}.
 \begin{theorem}\label{PS:2d}
Let $u\in H^1_{D}(\O)$ be the solution of  {\em (\ref{pde})}. In two and three dimensions,
 for all $w \in H^1(\cT_{h})$, we have
\[
	 \|A^{1/2}\nabla_h(u-w)\|^2 =
	 \inf_{\btau\in \S_f(\O)}\| A^{-1/2}\btau+
	 A^{1/2}\nabla_h w\|^2 + \inf_{v\in H^1_{D}(\O)}\|A^{1/2}\nabla_h(v-w)\|^2.
\]
 \end{theorem}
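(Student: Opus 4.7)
My plan is to split $u-w$ into a conforming part (living in $H^1_D(\O)$) plus a nonconforming residual, apply the classical Prager--Synge identity to the conforming part, and then reparametrize the flux infimum so that $\nabla\tilde w$ can be replaced by $\nabla_h w$.

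First I would define $\tilde w \in H^1_D(\O)$ as the unique solution of
\[
(A\nabla \tilde w, \nabla v) = (A\nabla_h w, \nabla v) \qquad \forall\, v \in H^1_D(\O),
\]
obtained from Lax--Milgram. This is the $A$-orthogonal projection of $w$ onto $H^1_D(\O)$ in the broken energy inner product, so it immediately realizes the second infimum: $\|A^{1/2}\nabla_h(\tilde w - w)\|^2 = \inf_{v\in H^1_D}\|A^{1/2}\nabla_h(v-w)\|^2$. Writing $\nabla_h(u-w) = \nabla(u-\tilde w) + \nabla_h(\tilde w - w)$ and using the defining orthogonality with test function $v = u-\tilde w \in H^1_D$, the cross term vanishes and Pythagoras yields
\[
\|A^{1/2}\nabla_h(u-w)\|^2 = \|A^{1/2}\nabla(u-\tilde w)\|^2 + \|A^{1/2}\nabla_h(\tilde w - w)\|^2.
\]

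It then remains to identify $\|A^{1/2}\nabla(u-\tilde w)\|^2$ with the first infimum, which is the main obstacle. Since $u, \tilde w \in H^1_D(\O)$, the classical Prager--Synge identity gives $\|A^{1/2}\nabla(u-\tilde w)\|^2 = \inf_{\btau \in \S_f(\O)} \|A^{-1/2}\btau + A^{1/2}\nabla \tilde w\|^2$. To convert $\nabla \tilde w$ into $\nabla_h w$, the key observation I would establish is that the ``nonconforming flux'' $\bsigma' := A\nabla_h(w-\tilde w)$ lies in the homogeneous flux space $\S_0(\O) = \{\btau \in H(\div;\O) : \nabla\cdot\btau = 0,\; \btau\cdot\bn = 0 \text{ on } \Gamma_N\}$. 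Zero distributional divergence follows by choosing $v \in C_c^\infty(\O) \subset H^1_D$ in the defining equation of $\tilde w$; the vanishing Neumann trace follows from the same equation with general $v \in H^1(\O)$, $v|_{\Gamma_D} = 0$, after elementwise integration by parts (using that the divergence already vanishes and that the interior normal jumps vanish since $\bsigma' \in H(\div;\O)$).

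Once $\bsigma' \in \S_0(\O)$ is established, the map $\btau \mapsto \btau + \bsigma'$ is a bijection of the affine set $\S_f(\O)$ onto itself. Combining this with the identity $A\nabla_h w = A\nabla\tilde w + \bsigma'$ gives $A^{-1/2}\btau + A^{1/2}\nabla_h w = A^{-1/2}(\btau + \bsigma') + A^{1/2}\nabla\tilde w$, so the two infima over $\S_f(\O)$ coincide. Substituting this into the Pythagorean decomposition above completes the proof. The delicate step is the $H(\div)$-regularity of $\bsigma'$: it is a concrete instance of a weighted Helmholtz decomposition, where the projection $\tilde w$ does the work of extracting the irrotational component so that the remainder is automatically solenoidal with the correct boundary behavior.
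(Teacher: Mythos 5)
Your proof is correct. Note that the paper itself does not prove \cref{PS:2d}; it only cites the identity from \cite{cai2021generalized}, so there is no in-paper argument to compare against, and your write-up is a legitimate self-contained derivation in the spirit of the cited source. All three ingredients check out: (i) the $A$-weighted projection $\tilde w\in H^1_D(\O)$ attains the second infimum, and the Galerkin orthogonality with $v=u-\tilde w$ kills the cross term, giving the Pythagorean split; (ii) the classical Prager--Synge identity applies to the conforming pair $(u,\tilde w)$, with the flux infimum attained at $\btau=-A\nabla u\in\S_f(\O)$; (iii) $\bsigma'=A\nabla_h(w-\tilde w)$ has vanishing distributional divergence (test with $C_c^\infty(\O)\subset H^1_D(\O)$), hence $\bsigma'\in H(\mbox{div};\O)$ with $\nabla\cdot\bsigma'=0$, and the global Green formula with $v\in H^1_D(\O)$ gives $\left<\bsigma'\cdot\bn, v\right>_{\Gamma_N}=0$, i.e.\ the Neumann trace vanishes in the same weak $H^{-1/2}$-duality sense in which $\btau\cdot\bn=g$ is imposed in $\S_f(\O)$; consequently $\btau\mapsto\btau+\bsigma'$ is a bijection of $\S_f(\O)$ onto itself and the two flux infima coincide. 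One small simplification: in step (iii) you do not need elementwise integration by parts or any statement about interior normal jumps --- the distributional-divergence computation over all of $\O$ already yields $\bsigma'\in H(\mbox{div};\O)$ with zero divergence, after which only the global Green identity is needed for the Neumann trace.
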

 where
  \[
\S_f(\O) = \Big\{ \btau \in H(\divvr;\O) : \gradt \btau =f  \mbox{ in } \O \;\mbox{ and }
  \; \btau \cdot \bn = {g  \mbox{ on } \Gamma_N} \Big\}.
  \]
  Based on the theorem and Corollary~3.5 in \cite{cai2021generalized}, 
the construction of an equilibrated a posteriori
error estimator for discontinuous finite element solutions is reduced to recover an equilibrated 
in $\S_f(\O)$ and to recover 
either a potential function in $H^1(\Omega)$ or a curl free vector-valued
function in $H(\curll;\Omega)$. In this paper, we focus on the part of equilibrated flux recovery.
We note that $  \displaystyle\inf_{\btau\in \S_f(\O)}\| A^{-1/2}\btau+A^{1/2}\nabla_h w\|^2 $ is usually referred to as the conforming error of $w$.

Note that in our case, the recovered flux $\hat \bsigma_{h}$ lies in $RT_f(\cT_h,s)$. A similar result to \cref{PS:2d} can be easily proved with an additional term $c\|f - f_{s}\|_{\O}$ on the right, 
\[
	 \|A^{1/2}\nabla_h(u-w)\|^2 \le 
	\| A^{-1/2}\hat \bsigma_{h}+
	 A^{1/2}\nabla_h w\|^2 + \inf_{v\in H^1_{D}(\O)}\|A^{1/2}\nabla_h(v-w)\|^2 + c\|f - f_{s}\|^{2}_{\O}.
\]
This additional oscillation term is of higher order when $f|_{K}$ is smooth for all $K \in \cT_{h}$. Therefore, it is usually neglected in the computation.

\section{Robust Efficiency} \label{sec:7}
In this section, we will prove the robust efficiency of the error indicator defined as the energy norm of the difference between the numerical and equilibrated recovered flux. Here the robustness refers to the independence of the efficiency constant with respect to the jump of the coefficient $A(x)$.

In particular, we define the local indicator and the global estimator for the conforming error by
\begin{equation} \label{estimators:cf_l}
	 \eta_{\sigma,K} = \|A^{-1/2} (\hat \bsigma_h - \bsigma_{h}) \|_K  
	\end{equation}
and
\begin{equation} \label{estimators:cf_g}
	\eta_\sigma =\left( \sum_{K \in \cT_h} \eta^2_{\sigma,K} \right)^{1/2}
	=\|A^{-1/2} (\hat \bsigma_h-  \bsigma_h) \|_{0,\O},
\eeq 
where 
$ \bsigma_h = - A\nabla u_{h}$ is the numerical flux and $\hat \bsigma_h$ is the recovered equilibrated flux.

We aim to prove the following local efficiency result:
	\begin{equation}\label{local-efficiency}
		\eta_{\sigma,K} 
	 \le C   \| A^{1/2} \nabla (u - u_h)\|_{\o_{K}} + \mbox{osc}(f),
\end{equation}
where $u_{h}$ is the finite element solution, $\o_{K}$ is a local neighborhood of $K$, $\mbox{osc}(f)$ is a oscillation term depends on the smoothness of $f$, and $C$ is independent of the jump of $A(x)$.


In the remaining section, we provide proof of efficiency for the newly developed fluxes in this paper. For the recovering fluxes that coincide with the existing state-of-the-art results, we refer to references \cite{Ai:07b,ern2007accurate, cai2021generalized, ainsworth2000posteriori}. For simplicity, we assume that the diffusion coefficient $A(x)$ is a piecewise constant function and that $A_{F}^{-} \le A_{F}^{+}$ for all $F \in \cE_{I}$. From here to thereafter, we use $a \lesssim b$ to denote that  that 
$a \le C b$ for a generic constant that is independent of the mesh size and the jump of $A$.

\subsection{Local efficiency for the flux recoveries of the NC FEM}

\begin{lemma}\label{lem:effi-nc-1}
Define the local error indicator
\begin{equation} \label{nc-local-indicator}
	 \eta_{\sigma,K} = \|A^{-1/2} \hat \bsigma_h + A^{1/2} \nabla u_k^{nc} \|_K 
	\end{equation}
	where $ \hat\bsigma_{h}$ is defined in \cref{lem:nc-case2}.
	Then we have
	\begin{equation}\label{local-effi-nc}
	 \eta_{\sigma,K} \le
	 C  \| A^{1/2} \nabla (u - u_{k}^{nc})\|_{\o_{K}} + \mbox{osc}(f)
\end{equation}
where $\o_{K}$ is a local neighborhood of $K$, $C$ is independent of the mesh size and the jump of $A$,  and $\mbox{osc}(f)$ is a oscillation term depends on the smoothness of $f$.
\end{lemma}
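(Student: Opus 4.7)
The plan is to apply the triangle inequality to split the indicator into two pieces,
\begin{equation*}
\eta_{\sigma,K} \le \|A^{-1/2}\tilde\bsigma_{k-1}^{nc} + A^{1/2}\nabla u_k^{nc}\|_K + \|A^{-1/2}\bsigma_0^\Delta\|_K =: T_1 + T_2,
\end{equation*}
and bound the averaging-flux part $T_1$ and the RT$(0)$ correction part $T_2$ separately. Throughout the argument, robustness with respect to coefficient jumps will be achieved by systematically exploiting the choice $A_F = \min(\a_F^+,\a_F^-)$ together with the weight estimates \cref{weight1}, which are precisely designed to absorb the jumps in $A$.

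For $T_1$, a direct computation using the defining relations \cref{rt:1:a}--\cref{rt:1:dg} of $\tilde\bsigma_{k-1}^{nc}$ together with the weighted-jump identity \cref{jump-id} shows that, on each $K$, the field $\tilde\bsigma_{k-1}^{nc} + A\nabla u_k^{nc}$ lies in $RT(K,k-1)$, has vanishing interior moments against $\mathbb{P}_{k-2}(K)^d$, and its normal trace on $F \in \cE_K$ is proportional to $w_F^{\mp}\jump{A\nabla u_k^{nc}\cdot\bn_F}$. A standard RT scaling estimate then bounds $T_1$ by a weighted sum of $L^2$-norms of these facet jumps, and the classical Verf\"urth-type efficiency argument for normal flux jumps of NC solutions (edge-bubble plus inverse inequality), combined with \cref{weight1}, yields $T_1 \lesssim \|A^{1/2}\nabla_h(u-u_k^{nc})\|_{\omega_K} + \mathrm{osc}(f)$.

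For $T_2$, since $\bsigma_0^\Delta \in RT(\cT_h,0)$, its normal trace on each $F \in \cE_K$ is the constant $r(\phi_F^-)$ coming from \cref{local__def-A}, so an RT$(0)$ scaling estimate gives
\begin{equation*}
T_2^2 \lesssim A_K^{-1}\sum_{F\in\cE_K} h_K^{2}\,|r(\phi_F^-)|^2.
\end{equation*}
It therefore suffices to control $|r(\phi_F^-)|$ by the local energy error. Integrating by parts on $K_F^-$ and using the facet identities for $\tilde\bsigma_{k-1}^{nc}$, one rewrites $r(\phi_F^-)$ as the sum of an element residual $(f+\nabla\cdot(A\nabla u_k^{nc}),\phi_F^-)_{K_F^-}$, a facet-jump contribution on $F$ weighted by $w_F^\pm$, and boundary terms supported away from interior facets. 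Each piece is then handled by the standard element-bubble and edge-bubble arguments combined with \cref{weight1} to restore robustness.

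\textbf{Main obstacle.} The most delicate point is the estimate on $|r(\phi_F^-)|$: because $\phi_F^-$ is only the one-sided restriction of the NC basis to $K_F^-$, one cannot directly invoke NC Galerkin orthogonality with it. The key trick I plan to use is to decompose $\phi_F^- = \tfrac{1}{2}\phi_F + \tfrac{1}{2}(\phi_F^- - \phi_F^+)$: the first summand is a legitimate test function in $NC(\cT_h,k)$ (so that the NC variational equation \cref{NC solution} kills the bulk contribution), while the second is supported in a narrow slab around $F$ and is estimated by an edge-bubble argument. Once this decomposition is in place the remaining bounds are routine, and summing over $F \in \cE_K$ yields \cref{local-effi-nc}.
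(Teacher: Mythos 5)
Your main line of argument is the same as the paper's: the triangle inequality splitting off $\|A^{-1/2}\bsigma_0^{\Delta}\|_K$ from the averaging-flux part, an RT scaling bound that reduces the first part to the weighted normal jumps of $A\nabla u_k^{nc}$ (via \cref{rt:1:aa} and \cref{weight1}) and the second part to $h_F|r(\phi_F^-)|$, an integration by parts on $K_F^-$ using \cref{rt:1:a}--\cref{rt:1:dg} to rewrite $r(\phi_F^-)$ as the element residual $(f+\nabla\cdot A\nabla u_k^{nc},\phi_F^-)_{K_F^-}$ minus $w_F^+\left<\jump{A\nabla u_k^{nc}\cdot\bn_F},\phi_F^-\right>_F$, and finally the scalings $\|\phi_F^-\|_{K_F^-}\lesssim 1$, $\|\phi_F^-\|_F\lesssim h_F^{-1/2}$ together with \cref{weight1} and the classical bounds \cref{efficiency-ele-residual}--\cref{efficiency--jump}. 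Up to that point your proposal is correct and coincides with the paper's proof.

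The ``main obstacle'' you identify, however, is not a real obstacle, and the fix you propose would not work as stated. No NC Galerkin orthogonality is needed to handle $r(\phi_F^-)$: after the integration by parts the bulk term is exactly the element residual, and the element-bubble estimate \cref{efficiency-ele-residual}, like the jump estimate \cref{efficiency--jump}, holds for an arbitrary piecewise polynomial; orthogonality plays no role, and this is precisely how the paper concludes. Your decomposition $\phi_F^- = \tfrac12\phi_F + \tfrac12(\phi_F^- - \phi_F^+)$ is also problematic on its own terms: (i) it is unavailable for $F\in\cE_D$, where $\phi_F\notin NC_{0,\Gamma_D}(\cT_h,k)$ (so $r(\phi_F)\neq 0$) and $K_F^+$ does not exist, even though \cref{local__def-A} prescribes $\bsigma_0^{\Delta}\cdot\bn_F$ on those facets too; (ii) $\phi_F^- - \phi_F^+$ is not supported in a narrow slab around $F$ but on the whole patch $K_F^-\cup K_F^+$, and it is not a bubble function (only its moments vanish on the other facets, not the function itself), so an edge-bubble argument does not apply to it directly; (iii) since $r(\phi_F)=0$ already gives $r(\phi_F^+)=-r(\phi_F^-)$, the decomposition merely returns $r(\phi_F^-)=\tfrac12\,r(\phi_F^--\phi_F^+)$, which is circular: estimating the right-hand side requires exactly the one-sided elementwise computation you were trying to avoid. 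Dropping that paragraph and simply carrying out the direct estimate you describe earlier yields the paper's proof.
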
	
\begin{proof}
We first observe that by triangle inequality,
\beq\label{tiangle-inequality}
\begin{split}
 \eta_{\sigma,K} &= \|A^{-1/2} (\hat \bsigma_h - \tilde \bsigma_{k-1}^{nc}) \|_K 
 +
 \|A^{-1/2}\tilde \bsigma_{k-1}^{nc} + A^{1/2} \nabla u_k^{nc}  \|_K\\
 &= \|A^{-1/2} \bsigma_0^{\Delta}\|_K 
 +
 \|A^{-1/2}\tilde \bsigma_{k-1}^{nc} + A^{1/2} \nabla u_k^{nc}  \|_K.
 \end{split}
\eeq
To bound $ \|A^{-1/2}\tilde \bsigma_{k-1}^{nc} + A^{1/2} \nabla u_k^{nc}  \|_K$, we first have by the equivalence of norms, \cref{weight1} and \cref{rt:1:aa},
\beq\label{bound-part1}
\begin{split}
	&\| A^{-1/2}\tilde \bsigma_{k-1}^{nc} + A^{1/2} \nabla u_k^{nc}  \|_K 
	\lesssim
	\sum_{F \in \cE_{K}}\| A_{K}^{-1/2}(\tilde \bsigma_{k-1}^{nc} 
	+ A_{K} \nabla u_k^{nc} ) \cdot \bn_{F}\|_{F}\\
	&
	\lesssim
	\sum_{F \in \cE_{K}}\| A_{F,max}^{-1/2}\jump{A \nabla u_{k}^{nc}} \cdot \bn_{F}\|_{F},
\end{split}
\eeq
where $A_{F,max} = \max(A_{F}^{-},A_{F}^{+})$.
To bound $ \|A^{-1/2}  \bsigma_0^{\Delta}\|_K$, recall	
\begin{equation}
	\int_{F} \bsigma_{0}^{\Delta}  \cdot \bn_{F} \,ds = |F| r(\phi_{F}^{-}),
	\quad \forall F\in \cE \setminus \cE_{N}.
\end{equation}
Since $ \bsigma_{0}^{\Delta} \in RT(\cT_{h},0)$, we have 
\beq\label{effi:1-a}
	\|  \bsigma_{0}^{\Delta} \|_K \le C\,
	\sum_{F \in \cE_K} h_F^{1/2} \|\bsigma_{0}^{\Delta}\cdot \bn_F \|_{F} 
	=
	C\,
	\sum_{F \in \cE_K} h_F  |r(\phi_{F}^{-})|.
\eeq
By the definition of $ \tilde \bsigma_{k-1}^{nc}$, integration by parts, and Cauchy Schwartz inequality, we have
\begin{equation}\label{effi:1-b}
\begin{split}
&r(\phi_{F}^{-}) 
= (f - \nabla \cdot  \tilde \bsigma_{k-1}^{nc}, \phi_{F}^{-})_{K_{F}^{-}}\\
&= (f,  \phi_{F}^{-}) + (\tilde \bsigma_{k-1}^{nc},  \nabla \phi_{F}^{-})_{K_{F}^{-}} 
-
<(\tilde \bsigma_{k-1} \cdot \bn_{K},  \phi_{F}^{-}) >_{\partial K_{F}^{-}}
\\
&= (f,  \phi_{F}^{-})_{K_{F}^{-}} - (A \nabla u_{k}^{nc},  \nabla \phi_{F}^{-})_{K_{F}^{-}} 
+
\sum_{F' \in \cE_{K}}< \{ A \nabla u_{k}^{nc} \cdot \bn_{F'}\}_w^F , \jump{ \phi_{F}^{-}}) >_{F'}
\\
&= (f + \nabla \cdot A \nabla u_{k}^{nc}, \phi_{F}^{-})_{K_{F}^{-}}
+
< \{ A \nabla u_{k}^{nc} \cdot \bn_{F}\}_w^F -   A \nabla u_{k}^{nc} \cdot \bn_{F} ,  \jump{\phi_{F}^{-}}) >_{F}
\\
&= (f + \nabla \cdot A \nabla u_{k}^{nc}, \phi_{F}^{-})_{K_{F}^{-}}
-
w_{F}^{+}< \jump{ A \nabla u_{k}^{nc} \cdot \bn_{F}},  {\phi_{F}^{-})} >_F \\
&\le 
 \| f + \nabla  \cdot A \nabla u_{k}^{nc} \|_{0, {K_{F}^{-}}} \| \phi_{F}^{-}\|_{0, {K_{F}^{-}}}+
w_{F}^{+}\| \jump{ A \nabla u_{k}^{nc} \cdot \bn_{F}} \|_{F}  \|{\phi_{F}^{-}}\|_{F}.
\end{split}
\end{equation}

Combining \cref{effi:1-b}  with \cref{effi:1-a} and the facts that,
\[
	\| \phi_{F}^{-}\|_{0,{K_{F}^{-}}} \lesssim 1 \quad \mbox{and} \quad 
	\|{\phi_{F}^{-}}\|_{F} \lesssim h_{F}^{-1/2}
\]
yields
\begin{equation}\label{bound-part2}
\begin{split}
	&\|  A^{-1/2}\bsigma_{0}^{\Delta} \|_K \le 	C\,
	\sum_{F \in \cE_K} h_F A_{K}^{-1/2}  |r(\phi_{F}^{-})| \\
&\lesssim 
 \sum_{F \in \cE_{K}} h_F\| A^{-1/2} (f + \nabla \cdot A \nabla u_{k}^{nc}) \|_{0,{K_{F}^{-}}} +
\sum_{F \in \cE_{K}}\dfrac{h_{F}^{1/2}}{A_{F,max}^{1/2}}\| \jump{ A \nabla u_{k}^{nc} \cdot \bn_{F}} \|_{F}.
\end{split}
\end{equation}
From the classical efficiency results, we also have
 \begin{equation}\label{efficiency-ele-residual}
  h_{K}\| A^{-1/2}(f + \nabla \cdot A \nabla u_{k}^{nc}) \|_K \lesssim \| A^{1/2} \nabla (u - u_{k}^{nc})\|_{\o_{K}} + \mbox{osc}(f)
 \end{equation}
 and
 \begin{equation}\label{efficiency--jump}
 \|A_{F,max}^{-1/2} \jump{ A \nabla u_{k}^{nc} \cdot \bn_{F}} \|_{F} \lesssim
  \| A^{1/2} \nabla (u - u_{k}^{nc})\|_{\o_{F}} + \mbox{osc}(f)
 \end{equation}
 where $\o_{K}$ and $\o_{F}$ are some local neighborhood of $K$ and $F$, respectively; and the involved constant does not depend on the jump of $A$.
Combining with \cref{bound-part1}, \cref{bound-part2} and the above two estimates, yields \cref{local-effi-nc}. This completes the proof of the lemma.
 \end{proof}
 
 \begin{lemma}
Define the local error indicator
\begin{equation} \label{nc-local-indicator-2}
	 \eta_{\sigma,K} = \|A^{-1/2} \hat \bsigma_h + A^{1/2} \nabla u_k^{nc} \|_K 
	\end{equation}
	where $ \hat\bsigma_{h}$ is defined in \cref{lem:nc-2}.
	Then we have
	\begin{equation}\label{local-effi-nc-2}
	 \eta_{\sigma,K} \le
	C   \| A^{1/2} \nabla (u - u_{k}^{nc})\|_{\o_{K}} + \mbox{osc}(f)
\end{equation}
where $\o_{K}$ is a local neighborhood of $K$, $\mbox{osc}(f)$ is a oscillation term depends on the smoothness of $f$, and the constant $C$ is independent of mesh size and the jump of $A$.
\end{lemma}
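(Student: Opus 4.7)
The proof will closely parallel that of \cref{lem:effi-nc-1}, since the flux $\hat\bsigma_h = \tilde\bsigma_1^{nc} + \bsigma_0^\Delta$ has the same structure: a weighted averaging part plus a correction in $RT(\cT_h,0)$. The plan is to split the indicator by triangle inequality
\[
\eta_{\sigma,K}\;\le\;\|A^{-1/2}\tilde\bsigma_1^{nc}+A^{1/2}\nabla u_k^{nc}\|_K\;+\;\|A^{-1/2}\bsigma_0^\Delta\|_K,
\]
and bound each term separately.

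For the first term, the argument is identical to \cref{bound-part1} in the previous proof. Since $\tilde\bsigma_1^{nc}$ is the weighted average in $RT(\cT_h,1)$, the standard scaling inequality on RT spaces together with \cref{weight1} and \cref{rt:1:aa} reduces this term to a sum of weighted normal-jump norms $\|A_{F,\max}^{-1/2}\jump{A\nabla u_k^{nc}}\cdot\bn_F\|_F$, which is then controlled by \cref{efficiency--jump}.

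For the correction term, I would use the RT(0) scaling $\|A^{-1/2}\bsigma_0^\Delta\|_K\lesssim A_K^{-1/2}\sum_{F\in\cE_K}h_F^{1/2}\|\bsigma_0^\Delta\cdot\bn_F\|_F$. Since $\bsigma_0^\Delta\cdot\bn_F$ is constant on $F$, the defining relation $\int_F\bsigma_0^\Delta\cdot\bn_F\,ds=6\,r(\phi_F^-)$ gives $\|\bsigma_0^\Delta\cdot\bn_F\|_F\lesssim |r(\phi_F^-)|/h_F^{1/2}$, so it remains to bound $|r(\phi_F^-)|$. Integrating by parts element by element and using the definition of $\tilde\bsigma_1^{nc}$ (both its facet averages \cref{rt:1:a} and its interior moments \cref{rt:1:dg}), together with the NC Galerkin orthogonality against $\phi_F$ (which is admissible since $\phi_F\in NC(\cT_h,2)$ and $\phi_F$ vanishes at edge Gauss points used for the order-1 degrees of freedom, making its jump moments compatible), I expect a decomposition analogous to \cref{effi:1-b}:
\[
r(\phi_F^-)\;=\;(f+\nabla\cdot A\nabla u_k^{nc},\phi_F^-)_{K_F^-}\;-\;w_F^+\langle\jump{A\nabla u_k^{nc}\cdot\bn_F},\phi_F^-\rangle_F.
\]

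The main (mild) obstacle is checking the scaling of the Fortin--Soulie edge basis $\phi_F=\lambda_{z_1}\lambda_{z_2}$. Unlike the Gauss--Legendre function $\phi_F^-$ used in the odd-order case, $\phi_F$ is $O(1)$ pointwise, so a direct calculation gives $\|\phi_F\|_{K_F^-}\lesssim h_K$ and $\|\phi_F\|_F\lesssim h_F^{1/2}$. Combining these with Cauchy--Schwarz in the decomposition above yields
\[
A_K^{-1/2}|r(\phi_F^-)|\;\lesssim\;h_F\,\|A^{-1/2}(f+\nabla\cdot A\nabla u_k^{nc})\|_{K_F^-}\;+\;h_F^{1/2}A_{F,\max}^{-1/2}\|\jump{A\nabla u_k^{nc}\cdot\bn_F}\|_F,
\]
where the weighted-average bound \cref{weight1} is used to absorb the factor $w_F^+/A_K^{1/2}$ into $A_{F,\max}^{-1/2}$. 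Finally, applying the standard local efficiency estimates \cref{efficiency-ele-residual} for the interior residual and \cref{efficiency--jump} for the normal-flux jump, and summing over $F\in\cE_K$, produces the desired bound \cref{local-effi-nc-2}, completing the proof.
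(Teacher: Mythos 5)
Your proposal is correct and follows essentially the same route as the paper: split off the averaging part exactly as in \cref{bound-part1}, reduce $\|A^{-1/2}\bsigma_0^{\Delta}\|_K$ to $\sum_{F\in\cE_K}|r(\phi_F^-)|$ via the $RT(\cT_h,0)$ scaling and the defining relation, expand $r(\phi_F^-)$ into the element residual plus the weighted normal-jump term as in \cref{effi:1-bb}, use the scalings $\|\phi_F\|_{K_F^-}\lesssim h_K$, $\|\phi_F\|_F\lesssim h_F^{1/2}$ together with \cref{weight1}, and conclude with \cref{efficiency-ele-residual} and \cref{efficiency--jump}. The only cosmetic point is your appeal to NC Galerkin orthogonality: it is not needed (and your justification via Gauss points is off-target), since the decomposition of $r(\phi_F^-)$ comes purely from local integration by parts on $K_F^-$, the facet identity \cref{rt:1:aa}, the interior moment condition \cref{rt:1:dg}, and the fact that $\phi_F$ vanishes on the two other edges of $K_F^-$.
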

\begin{proof}
Similar to the proof of \cref{lem:effi-nc-1}, it is sufficient to prove
	\begin{equation}\label{local-effi-nc-2a}
	\|  A^{-1/2}\bsigma_{0}^{\Delta} \|_K \le
	C   \| A^{1/2} \nabla (u - u_{k}^{nc})\|_{\o_{K}} + \mbox{osc}(f).
\end{equation}
Since $ \bsigma_{0}^{\Delta} \in RT(\cT_{h},0)$, we have 
\beq \label{effi:1-aa}
	\|  \bsigma_{0}^{\Delta} \|_K \le C\,
	\sum_{F \in \cE_K} h_F^{1/2} \|\bsigma_{0}^{\Delta}\cdot \bn_F \|_{F} 
	=
	C\,
	\sum_{F \in \cE_K}   |r(\phi_{F}^{-})|.
\eeq
Similar  to \cref{effi:1-b}, we have
\begin{equation}\label{effi:1-bb}
\begin{split}
r(\phi_{F}^{-}) 
&= (f - \nabla \cdot  \tilde \bsigma_{1}^{nc}, \phi_{F}^{-})_{K_{F}^{-}}\\
&= (f + \nabla \cdot A \nabla u_{k}^{nc}, \phi_F^{-})_{K_{F}^{-}}
-
w_{F}^{+}< \jump{ A \nabla u_{k}^{nc} \cdot \bn_{F}},  {\phi_F^{-})} >_F \\
&\le 
 \| f + \nabla  \cdot A \nabla u_{k}^{nc} \|_{0, {K_{F}^{-}}} \| \phi_F^{-}\|_{0, {K_{F}^{-}}}+
w_{F}^{+}\| \jump{ A \nabla u_{k}^{nc} \cdot \bn_{F}} \|_{F}  \|{\phi_F^{-}}\|_{F}.
\end{split}
\end{equation}
Combining \cref{effi:1-bb}, \cref{effi:1-aa}, \cref{weight1} with the facts that,
\[
	\| \phi_{F}^{-}\|_{0,{K_{F}^{-}}} \lesssim h_{F} \quad \mbox{and} \quad 
	\|{\phi_{F}^{-}}\|_{F} \lesssim h_{F}^{1/2}
\]
yields
\begin{equation*}
\begin{split}
	&\|  A^{-1/2}\bsigma_{0}^{\Delta} \|_K \le 	C\,
	\sum_{F \in \cE_K} h_F A_{K}^{-1/2}  |r(\phi_{F}^{-})| \\
&\lesssim 
 \sum_{F \in \cE_{K}} \| A^{-1/2} (f + \nabla \cdot A \nabla u_{k}^{nc}) \|_{0,{K_{F}^{-}}} +
\sum_{F \in \cE_{K}}h_{F}^{1/2} A_{F,max}^{-1/2}\| \jump{ A \nabla u_{k}^{nc} \cdot \bn_{F}} \|_{F}.
\end{split}
\end{equation*}
Finally, applying the above result, \cref{efficiency--jump} and \cref{efficiency-ele-residual} yields \cref{local-effi-nc-2a}, and, hence, \cref{local-effi-nc-2}. This completes the proof of the lemma.
\end{proof}

\subsection{Local Efficiency of Flux Recovery in the Orthogonal Complement Space for Conforming FEM} 
For simplicity, we restrict our analysis in the two dimensions.
To show that the efficiency constant for the conforming case is independent of the jump of $A(x)$,
as usual, we assume that the distribution of the coefficients $A_K$
for all $K\in \cT$ is locally quasi-monotone \cite{petzoldt2002posteriori}, which is
slightly weaker than Hypothesis 2.7 in \cite{bernardi2000adaptive}.
The assumption has been used in the literature and remains essential for rigorous theoretical proofs. However, numerical experiments and practical applications indicate that this assumption is not necessary in computations, as the method performs reliably even when it does not strictly hold, e.g., see \cite{cai2017improved}.

For convenience of readers, we restate the definition of quasi-monotonicity.
Let $\o_z$ be the union of all elements having $z$ as a vertex.
For any $z\in\cN$, let
 \[
 \hat{\omega}_z=\{K\in\omega_z \,:\, A_K = \max_{K'\in\omega_z} A_{K'}\}.
\]
\begin{definition}\label{defnquasimonotone}
Given a vertex $z \in \cN$, the distribution of the coefficients $A_K$, $K\in\omega_z$,
is said to be {\em quasi-monotone} with respect to the vertex $z$
if there exists a subset $\tilde{\o}_{K,z,qm}$ of $\omega_z$ such that the union
of elements in $\tilde{\o}_{K,z,qm}$ is a Lipschitz domain and that
\begin{itemize}
\item if $z\in\cN\backslash\cN_\sD$, then $\{K\}\cup \hat{\o}_z
\subset \tilde{\o}_{K,z,qm}$
and $A_K\leq A_{K'} \; \forall K' \in \tilde{\o}_{K,z,qm}$;
\item if $z\in\cN_\sD$, then $K\in \tilde{\o}_{K,z,qm}$,
$\p\tilde{\omega}_{K,z,qm}\cap\Gamma_D \neq \emptyset$, and
$A_K\leq A_{K'} \; \forall K' \in \tilde{\o}_{K,z,qm}$.
\end{itemize}
The distribution of the coefficients $A_K$, $K\in\cT$, is said to be
locally {\em quasi-monotone} if it is quasi-monotone with respect to
every vertex $z\in\cN$.
\end{definition}
 For a function $v \in DG(\cT_h, s)$, we choose to employ the classical Lagrangian basis functions. 
 For a function $v \in DG^{0}(\cT_h, s)$,  since only  $\jump{v}|_{\cE \setminus \cE_{N}}$ is needed, from here to thereafter, we assume that $v$ satisfies the following conditions. 
 \begin{itemize}
 \item
The interior degrees of freedom of $v$ are zero, i.e., $v(\bx_{i,K}) =0, i=1, \cdots, m_{s}$.
 \item
Let $\{\bx_{i,F}\}_{i=1}^{s-1} (s \ge 1)$be the set of all Lagrange points on $F$ excluding the vertices $\bx_{s,F}$ and $\bx_{e,F}$ for the space $P_{k}(F)$. For each $F \in \cE_{I}$ and $\bx_{i,F} \in F$, let $v|_{K_{F}^{+}}(\bx_{i,F})=0$. 
\item
For each $z \in \cN$, we denote by $K_{z}$ be a element in $\hat w_{z}$ and let $v|_{K_{z}}(z) =0$.
\item When $F \in \cE_{N}$, we simply let $v|_{F} \equiv 0$.
 \end{itemize}
 It's not difficult to see that with the above four types of restrcitions, any function  $v \in DG^{0}(\cT_h, s)$ is uniquely determined. 

We first prove the following lemma.
\begin{lemma}
For any $v \in DG^{0}(\cT_h, s)$, there holds
\begin{equation}\label{qw-0}
	\|A^{1/2} \nabla v\|_{\cT_{h}} \le C  \sum_{F \in \cE \setminus \cE_{N}} h_{F}^{-1/2} A_{F}^{1/2} \|\jump{v}\|_{F}.
\end{equation}
where the constant $C$ is independent of the mesh and the jump of the coefficient $A(x)$,
\end{lemma}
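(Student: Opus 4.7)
The plan is to combine a piecewise inverse estimate with an equivalence of norms on polynomial spaces, and then use a quasi-monotone path argument to convert nodal values at vertices into telescoping sums of facet jumps with the correct $A$-weighting.

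First, on each $K \in \cT_h$ I would apply the standard polynomial inverse inequality
\[
\|A^{1/2}\nabla v\|_K^2 \lesssim A_K h_K^{-2}\|v\|_K^2,
\]
and then use the equivalence of $L^2$ and nodal norms on $\mathbb{P}_s(K)$: if $\{x_i\}$ denotes the Lagrange nodes of $\mathbb{P}_s(K)$ then $\|v\|_K^2 \lesssim h_K^d \sum_i |v|_K(x_i)|^2$. By the first restriction on representatives of $DG^0(\cT_h,s)$, the interior-node terms vanish, so only nodes lying on $\partial K$ contribute, and these split naturally into (a) interior-of-facet Lagrange points and (b) vertex nodes.

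For (a), by the second restriction, if $K = K_F^+$ the corresponding values are zero, while if $K = K_F^-$ we have $v|_K(x_{i,F}) = \jump{v}(x_{i,F})$. The 1D polynomial inverse estimate on $F$ gives $|v|_K(x_{i,F})|^2 \lesssim h_F^{-1}\|\jump{v}\|_F^2$, so (recalling $d=2$ and $h_K \simeq h_F$)
\[
A_K h_K^{-2}\cdot h_K^d |v|_K(x_{i,F})|^2 \lesssim A_K h_F^{-1}\|\jump{v}\|_F^2 \lesssim A_F h_F^{-1}\|\jump{v}\|_F^2,
\]
provided the representative is chosen so that the zero-set is placed on the larger-coefficient side; this is the natural reading of the restriction that aligns with the robust estimate.

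For (b), fix an element $K$ and a vertex $z \in \cN_K$. Quasi-monotonicity supplies the neighborhood $\tilde{\o}_{K,z,qm}$ in which $A_K \le A_{K'}$ for all $K' \in \tilde{\o}_{K,z,qm}$, and by mesh regularity I can select a chain $K = K_0, K_1, \dots, K_m = K_z$ of uniformly bounded length $m$ of elements in $\tilde{\o}_{K,z,qm}$, with consecutive elements sharing a facet $F_i \ni z$. Since $v|_{K_z}(z) = 0$ by the third restriction, telescoping gives
\[
v|_K(z) \;=\; \sum_{i=0}^{m-1}\bigl(v|_{K_i}(z)-v|_{K_{i+1}}(z)\bigr) \;=\; \pm\sum_{i=0}^{m-1}\jump{v}_{F_i}(z).
\]
The 1D polynomial inverse estimate yields $|\jump{v}_{F_i}(z)|^2 \lesssim h_{F_i}^{-1}\|\jump{v}\|_{F_i}^2$, and quasi-monotonicity gives $A_K \le A_{F_i}$ for every $F_i$ in the chain. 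Consequently
\[
A_K h_K^{d-2}|v|_K(z)|^2 \;\lesssim\; \sum_{i=0}^{m-1} A_{F_i} h_{F_i}^{-1}\|\jump{v}\|_{F_i}^2.
\]
The Dirichlet vertices are handled analogously using that $\partial \tilde{\o}_{K,z,qm}\cap \Gamma_D \ne \emptyset$ and $v|_F \equiv 0$ on $\cE_N$; in all cases the chain terminates at a facet where $v$ is known to vanish.

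Finally I would sum over all $K \in \cT_h$ and its three vertices. Because each facet in the mesh appears in only a uniformly bounded number of chains (by shape-regularity and the bounded length $m$), the resulting double sum collapses to $\sum_{F\in\cE\setminus\cE_N} A_F h_F^{-1}\|\jump{v}\|_F^2$, yielding the square of \cref{qw-0}. The principal obstacle is the vertex step: correctly constructing the chains within $\tilde{\o}_{K,z,qm}$ and verifying that each facet is used $O(1)$ times are what make the constant independent of the jumps in $A$; once that is in hand, everything else is a routine inverse-estimate/nodal-equivalence argument.
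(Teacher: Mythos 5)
Your proposal is correct and follows essentially the same route as the paper's proof: reduce $\|A^{1/2}\nabla v\|_K$ to nodal values, use the representative's restrictions to kill interior and facet-interior values on the larger-coefficient side, telescope vertex values through the quasi-monotone patch $\tilde{\o}_{K,z,qm}$ so that $A_K\le A_F$ along the chain, and finish with one-dimensional inverse estimates and finite overlap. The only caveat is cosmetic: on a Dirichlet facet $v$ need not vanish, but since $\jump{v}|_F=v|_F^-$ there, the terminal term of your chain is itself one of the admissible jump terms, exactly as in the paper.
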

\begin{proof}
For any $K \in \cT_{h}$, we first have that
\beq\label{qw-1}
\begin{split}
	\|\nabla v\|_{K} \lesssim  \sum_{z \in \cN_{K}} | v_{K}(z)| 
	+ \sum_{F \in \cE_{K}} \sum_{i=1}^{s-1} |v_{K}(\bx_{i,F})|.
\end{split}
\eeq
Fix any $z \in \cN_K$. If $K = K_{z}$, we have $v_{K}(z)=0$.
If $K \neq K_{z}$, applying the triangle equality and the property of $v$ yields
\beq
\begin{split}
 | v_{K}(z)|  \le \sum_{F \subset \tilde \o_{K,z,qm} \cap \cE_{z}} | \jump{v}|_F(z)|.
\end{split}
\eeq
Here we note that $\tilde \o_{K,z,qm}$ is open, so $\tilde \o_{K,z,qm} \cap \cE_{z}$ does not contain the two boundary edges $ \partial_{\tilde \o_{K,z,qm}} \cap \cE_{z}$.
Since $A(x)$ is monotone along $\tilde \o_{K,z,qm}$, we have
\beq
\begin{split}
 A_{K}^{1/2}| v_{K}(z)|  \le \sum_{F \subset \tilde \o_{K,z,qm} \cap \cE_{z}} A_{F}^{1/2}| \jump{v}|_F(z)|.
\end{split}
\eeq
Finally, for any $\bx_{i,F}$, $i=1, \cdots, s-1$ and $F \in \cE_{K}$ we have
\beq\label{qw-2}
A_{K}^{1/2}|v_{K}(\bx_{i,F})| = 
\begin{cases}
0 & \mbox{if }K = K_{F}^{+}\\
A_{K}^{1/2}| \jump{v}|_{F}(\bx_{i,F})|   & \mbox{if } K = K_{F}^{-}
\end{cases}
\le A_{F}^{1/2}| \jump{v}|_{F}(\bx_{i,F})|.
\eeq
Recall we assumed that $A_{F}^{-} \le A_{F}^{+}$. 
Combing \cref{qw-1}--\cref{qw-2}, we have for any $K \in \cT_{h}$,
\beq
\begin{split}
	\|A^{1/2}\nabla v\|_{K}& \lesssim  
	\sum_{z \in \cN_{K}}  \sum_{F \in \cE_{z}} 
	A_{F}^{1/2}| \jump{v}|_{F}(z)
	+  \sum_{F \in \cE_K}\sum_{i=1}^{s-1}A_{F}^{1/2}| \jump{v}|_{F}(\bx_{i,F})| 
	\\
	& \lesssim\sum_{z \in \cN_{K}}  \sum_{F \in \cE_{z}} A_{F}^{1/2} h_{F}^{-1/2} \| \jump{v}\|_{F}.
\end{split}
\eeq
and, hence, \cref{qw-0}. This completes the proof of the lemma.
\end{proof}


\begin{lemma}\label{lem:effi-cg-1}
Let $u_{s}^{\Delta} \in DG^{0}(\cT_h, s)$ be the solution of \cref{b-correction-A}. We have the following estimation for $\tri u_{s}^{\Delta}\tri$:
\beq \label{global-efficiency}
	\tri u_{s}^{\Delta}\tri \le C \| A^{1/2} \nabla (u - u_{k}^{cg})\|,
	\eeq
	where the constant $C$ is independent of the mesh and the jump of the coefficient of $A(x)$ and the norm $\tri \cdot \tri$ is defined in \cref{tri-norm}.
\end{lemma}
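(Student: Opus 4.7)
The plan is to exploit the coercivity of $\mathcal{A}(\jump{\cdot},\jump{\cdot})$ on $DG^{0}(\cT_{h},s)$ under the norm $\tri\cdot\tri$. Testing \cref{b-correction-A} with $v = u_{s}^{\Delta}$ yields
\begin{equation*}
\tri u_{s}^{\Delta}\tri^{2} = \mathcal{A}(\jump{u_{s}^{\Delta}},\jump{u_{s}^{\Delta}}) = r(u_{s}^{\Delta}),
\end{equation*}
so the entire task reduces to proving the continuity estimate $r(u_{s}^{\Delta}) \lesssim \|A^{1/2}\nabla(u-u_{k}^{cg})\|_{\O}\,\tri u_{s}^{\Delta}\tri$ with a constant independent of the coefficient jump.

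The first step is to re-express $r(v)$ in terms of the continuous error $e := u - u_{k}^{cg}$. Using $(f,v) = (A\nabla u,\nabla v)_{\cT_{h}} - \sum_{F\in\cE\setminus\cE_{N}}(A\nabla u\cdot\bn_{F},\jump{v})_{F}$ (which follows from the PDE, elementwise integration by parts, and the vanishing of $v \in DG^{0}$ on $\cE_{N}$), combined with the interior-moment identity $(\tilde\bsigma_{k-1}^{cg},\nabla v)_{K} = -(A\nabla u_{k}^{cg},\nabla v)_{K}$ (valid since $\nabla v|_{K} \in \mathbb{P}_{s-1}(K)^{d} \subset \mathbb{P}_{k-2}(K)^{d}$), the facet identity $\tilde\bsigma_{k-1}^{cg}\cdot\bn_{F} = -\{A\nabla u_{k}^{cg}\cdot\bn_{F}\}_{w}^{F}$, and the fact that $A\nabla u$ is continuous across interior facets so $A\nabla u\cdot\bn_{F} = \{A\nabla u\cdot\bn_{F}\}_{w}^{F}$, one obtains the clean representation
\begin{equation*}
r(v) = (A\nabla e,\nabla v)_{\cT_{h}} - \sum_{F\in\cE\setminus\cE_{N}}\bigl(\{A\nabla e\cdot\bn_{F}\}_{w}^{F},\jump{v}\bigr)_{F}.
\end{equation*}

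Substituting $v=u_{s}^{\Delta}$ and applying Cauchy-Schwarz splits the estimate into a volume piece and a facet piece. The volume piece is at most $\|A^{1/2}\nabla e\|_{\O}\,\|A^{1/2}\nabla u_{s}^{\Delta}\|_{\cT_{h}}$; squaring the elementwise bound hidden in the proof of \cref{qw-0} and summing with finite overlap of star patches gives the mesh- and coefficient-independent estimate $\|A^{1/2}\nabla u_{s}^{\Delta}\|_{\cT_{h}} \lesssim \tri u_{s}^{\Delta}\tri$. The facet piece, after inserting the dual weights $h_{F}^{1/2}A_{F}^{-1/2}$ and $h_{F}^{-1/2}A_{F}^{1/2}$, reduces to $\bigl(\sum_{F}h_{F}A_{F}^{-1}\|\{A\nabla e\cdot\bn_{F}\}_{w}^{F}\|_{F}^{2}\bigr)^{1/2}\tri u_{s}^{\Delta}\tri$.

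The main obstacle is therefore the robust bound $h_{F}A_{F}^{-1}\|\{A\nabla e\cdot\bn_{F}\}_{w}^{F}\|_{F}^{2} \lesssim \|A^{1/2}\nabla e\|_{\o_{F}}^{2}$ with a constant independent of the coefficient jump; a naive trace inequality fails because $\nabla e$ is not piecewise polynomial and $\|\nabla^{2}e\|_{K}$ is uncontrolled. I would resolve this by decomposing $\{A\nabla e\cdot\bn_{F}\}_{w}^{F} = w_{F}^{+}A\nabla e|_{K_{F}^{+}}\cdot\bn_{F} + w_{F}^{-}A\nabla e|_{K_{F}^{-}}\cdot\bn_{F}$, absorbing the weights via $w_{F}^{\pm}A_{F}^{\pm} \le (A_{F}A_{F}^{\pm})^{1/2}$ from \cref{weight1}, writing each one-sided normal flux as the difference between the continuous $A\nabla u$ and the piecewise polynomial $A\nabla u_{k}^{cg}|_{K_{F}^{\pm}}$, and invoking the classical Verf\"urth bubble-function efficiency estimate. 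The latter bounds the one-sided normal flux error on $F$ by $h_{F}^{-1/2}\|A^{1/2}\nabla e\|_{K_{F}^{\pm}}$ plus the element residual $h_{F}^{1/2}\|A^{-1/2}(f+\nabla\cdot A\nabla u_{k}^{cg})\|_{K_{F}^{\pm}}$, and the latter is absorbed into $\|A^{1/2}\nabla e\|_{\o_{F}}$ up to a data-oscillation term. The quasi-monotonicity of $A$ guarantees jump-independence of all constants when one aggregates across star patches. Combining these bounds and dividing through by $\tri u_{s}^{\Delta}\tri$ yields \cref{global-efficiency}.
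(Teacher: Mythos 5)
Your overall skeleton is the same as the paper's: testing with $u_{s}^{\Delta}$ (equivalent to the dual-norm characterization in \cref{norm-bounds}), splitting $r$ into a volume and a facet contribution, controlling $\|A^{1/2}\nabla u_{s}^{\Delta}\|_{\cT_{h}}$ by $\tri u_{s}^{\Delta}\tri$ through the elementwise version of \cref{qw-0}, and finishing with the classical efficiency bounds. The gap is in your facet term. By rewriting $r(v)$ with $\{A\nabla e\cdot\bn_{F}\}_{w}^{F}$ you have introduced one-sided facet traces of the exact flux: for $u\in H^{1}_{D}(\O)$ with $f\in L^{2}(\O)$ one only has $A\nabla u\in H(\mbox{div};\O)$, so $A\nabla u\cdot\bn_{F}$ lives in $H^{-1/2}$ on $\partial K$ and the quantity $\|\{A\nabla e\cdot\bn_{F}\}_{w}^{F}\|_{F}$ in your facet-by-facet Cauchy--Schwarz step need not be defined. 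More seriously, the estimate you invoke to close the argument --- a ``classical Verf\"urth bubble-function'' bound of the one-sided error $\|(A\nabla u-A\nabla u_{k}^{cg})|_{K_{F}^{\pm}}\cdot\bn_{F}\|_{F}$ by $h_{F}^{-1/2}\|A^{1/2}\nabla e\|_{K_{F}^{\pm}}$ plus an element residual --- does not exist. Bubble-function efficiency arguments control the polynomial jump $\jump{A\nabla u_{k}^{cg}\cdot\bn_{F}}$ (that is exactly \cref{efficiency--jump}); they give no control of a one-sided trace of $\nabla e$, and obtaining such control would require precisely the $\|\nabla^{2}e\|_{K}$-type trace inequality that you yourself noted is unavailable. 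So the ``main obstacle'' you identified is not actually resolved by your decomposition.

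The repair is to keep the exact flux off the facets altogether: after elementwise integration by parts, use the single-valuedness of $A\nabla u\cdot\bn_{F}$ together with the jump identity \cref{jump-id} to move the jump onto the discrete flux, which yields the paper's representation
\begin{equation*}
 r(v) = (f+\nabla\cdot A\nabla u_{k}^{cg},\, v)_{\O} \;-\; \sum_{F\in\cE_{I}}\bigl\langle \jump{A\nabla u_{k}^{cg}\cdot\bn_{F}},\, \{v\}_{F}^{w}\bigr\rangle_{F},
\end{equation*}
in which only the polynomial jump $\jump{A\nabla u_{k}^{cg}\cdot\bn_{F}}\in L^{2}(F)$ appears. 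Then $\{v\}_{F}^{w}$ is absorbed using \cref{weight1} with the trace and Poincar\'e inequalities (as in \cref{bound-part2a}--\cref{bound-part2b}), $\|A^{1/2}\nabla v\|_{\cT_{h}}$ is bounded by $\tri v\tri$ exactly as you proposed, and \cref{efficiency-ele-residual} and \cref{efficiency--jump} conclude (note they also bring in the data-oscillation term). Your treatment of the volume piece, with the squared, finite-overlap summation of the local bound behind \cref{qw-0}, is correct and in fact makes explicit a step the paper leaves implicit.
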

 \begin{proof}
 Frist, we observe that
 \begin{equation}\label{norm-bounds}
  \tri u_{s}^{\Delta} \tri = \sup_{v \in DG^{0}(\cT_h, s), \tri v\tri =1} {\mathcal{A}(u_{s}^{\Delta}, v)} =
  \sup_{v \in DG^{0}(\cT_h, s), \tri v\tri =1} r (v).
  \end{equation}
  Applying integration by parts,  the definition of $\tilde \bsigma_{k-1}^{cg}$ and \cref{jump-id},  we have
\begin{align}\label{effi:cg-a}
r(v) &
\!=\! (f - \nabla \cdot  \tilde \bsigma_{k-1}^{cg}, v)_{\O}
\!=\! (f,  v)_{\O} \!+\!  \sum_{K \in \cT_{h}} \left((\tilde \bsigma_{k-1}^{cg},  \nabla v)_{K} 
\!-\!
<\tilde \bsigma_{k-1}^{cg} \cdot \bn_{K},  v >_{\partial K} \right)
\notag\\
&= (f,  v)_{\O} - \sum_{K \in \cT_{h}}(A \nabla u_{k}^{cg},  \nabla v)_{K} 
+
\sum_{F \in \cE \setminus \cE_{N}}< \{ A \nabla u_{k}^{cg} \cdot \bn_{F}\}_{w}^{F} , \jump{ v}) >_{F}
\\
&= (f + \nabla \cdot A \nabla u_{k}^{cg}, v)_{\O}
-
\sum_{F \in \cE_{I}}< (\jump{ A \nabla u_{k}^{cg} \cdot \bn_{F}} ,  \{v\}_{F}^{w}) >_{F}.\notag
\end{align}

By the triangle and Cauchy Schwartz inequalities, and \cref{weight1}, we have for any $F \in \cE_{I}$,
\begin{equation}\label{bound-part2a}
\begin{split}
&< \jump{ A \nabla u_{k}^{cg} \cdot \bn_{F}} ,  \{v\}_{F}^{w}>_{F}
\lesssim
		\|  \jump{ A \nabla u_{k}^{cg} \cdot \bn_{F}}\|_{F}  
		(\omega^+\| v_{K_F^-}\|_{F}
		+\omega^-\|v_{K_F^+}\|_{F} )\\
&\lesssim
\|  A_{F,max}^{-1/2}\jump{ A \nabla u_{k}^{cg} \cdot \bn_{F}}\|_{F}  
		(\| A^{1/2}v_{K_F^-}\|_{F}
		+\|A^{1/2}v_{K_F^+}\|_{F} ).
\end{split}
\end{equation}
Now, applying the trace and Poincar\'e inequalities, we can bound for any $K \in \{K_{F}^{+}, K_{F}^{-}\}$,
\begin{equation}\label{bound-part2b}
\| A^{1/2}v|_{K}\|_{F} \lesssim h_{F}^{-1/2}\| A^{1/2}v\|_K  + h_{F}^{1/2}\| A^{1/2} \nabla v\|_K  \lesssim   h_{F}^{1/2}\| A^{1/2} \nabla v\|_K .
\end{equation}

Finally, applying \cref{norm-bounds}--\cref{bound-part2b} and the Cauchy Schwartz inequality, \cref{qw-0} and the fact that $\tri v \tri =1$, we have 
\begin{align}\label{effi:cg-aa}
&r(v)\le 
 \sqrt{\sum_{K \in \cT_{h}}h_{K}^{2}\|A^{-1/2}( f + \nabla  \cdot A \nabla u_{k}^{cg})\|_K^{2} }
 \sqrt{\sum_{K \in \cT_{h}}\| A^{1/2} \nabla v\|_K^{2}} \notag\\
 &+
\sqrt{\sum_{F \in \cE \setminus \cE_{N}} h_{F} A_{F,max}^{-1}\| \jump{ A \nabla u_{k}^{cg} \cdot \bn_{F}} \|_{F}^{2}}
 \sqrt{\sum_{K \in \cT_{h}}\| A^{1/2} \nabla v\|_K^{2}}
\notag\\
&\lesssim 
\sqrt{ {\sum_{K \in \cT_{h}}h_{K}^{2}\| f + \nabla  \cdot A \nabla u_{k}^{cg}\|_K^{2} }
 +
 {\sum_{F \in \cE \setminus \cE_{N}}  \dfrac{h_{F}}{A_{F,max}}\| \jump{ A \nabla u_{k}^{cg} \cdot \bn_{F}} \|_{F}^{2}}} \|A^{1/2} \nabla v\|_{\cT_{h}},
\notag \\
&\lesssim 
 \sqrt{\sum_{K \in \cT_{h}}h_{K}^{2}\| f + \nabla  \cdot A \nabla u_{k}^{cg}\|_K^{2} }
 +
 \sqrt{\sum_{F \in \cE \setminus \cE_{N}} \dfrac{h_{F}}{A_{F,max}}\| \jump{ A \nabla u_{k}^{cg} \cdot \bn_{F}} \|_{F}^{2}} \notag,
\end{align}
which, combining with the classical local efficiency results in \cref{efficiency-ele-residual} and \cref{efficiency--jump} yields \cref{global-efficiency}. This completes the proof of the lemma.
 \end{proof}

\begin{lemma}\label{lem:effi-cg-2}
Define the local error indicator
\begin{equation} \label{cg-local-indicator}
	 \eta_{\sigma,K} = \|A^{-1/2} \hat \bsigma_h^{cg} + A^{1/2} \nabla u_k^{cg} \|_K 
	\end{equation}
	where $ \hat\bsigma_{h}^{cg}$ is defined in \cref{lem:sigma-hat-1}.
	Then we have the following global efficiency bound:
	\begin{equation}\label{global-effi-cg}
	 \eta_{\sigma} \le
	 C  \| A^{1/2} \nabla (u - u_{k}^{cg})\|_{\cT_{h}} + \mbox{osc}(f),
\end{equation}
where the constant is independent of the mesh and the jump of the coefficient $A(x)$.
\end{lemma}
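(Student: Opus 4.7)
The plan is to bound $\eta_\sigma$ by splitting it via the triangle inequality into the weighted averaging contribution and the correction contribution, handling each separately and then invoking \cref{lem:effi-cg-1}. Concretely,
\[
\eta_\sigma \le \|A^{-1/2}\bsigma_s^{\Delta}\|_{\cT_h} + \|A^{-1/2}\tilde\bsigma_{k-1}^{cg} + A^{1/2}\nabla u_k^{cg}\|_{\cT_h}.
\]

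For the second term, I would reproduce the argument used in the proof of \cref{lem:effi-nc-1} (see \cref{bound-part1}): on each $K$, use the equivalence of norms on the finite-dimensional space $RT(K,k-1)$ together with the facet degrees of freedom from \cref{rt:1:aa}, the identity \cref{jump-id}, and the weight bounds \cref{weight1}, to obtain
\[
\|A^{-1/2}\tilde\bsigma_{k-1}^{cg} + A^{1/2}\nabla u_k^{cg}\|_K \lesssim \sum_{F\in\cE_K} A_{F,\max}^{-1/2}\|\jump{A\nabla u_k^{cg}\cdot\bn_F}\|_F.
\]
Squaring, summing over $K$, and applying the classical local efficiency bound \cref{efficiency--jump} together with \cref{efficiency-ele-residual} absorbs this piece into $\|A^{1/2}\nabla(u-u_k^{cg})\| + \mathrm{osc}(f)$ in a manner that is robust with respect to the jump in $A$.

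For the correction term, I would use the standard Raviart--Thomas degree-of-freedom bound $\|\bsigma_s^{\Delta}\|_K^2 \lesssim \sum_{F\in\cE_K} h_F\|\bsigma_s^{\Delta}\cdot\bn_F\|_F^2$ together with the defining identity $\mathbf{S}(u_s^{\Delta})\cdot\bn_F = A_F h_F^{-1}\jump{u_s^{\Delta}}$ from \cref{dg-} to obtain
\[
\|A^{-1/2}\bsigma_s^{\Delta}\|_K^2 \lesssim \sum_{F\in\cE_K} \frac{A_F^2}{A_K}\,h_F^{-1}\,\|\jump{u_s^{\Delta}}\|_F^2 \lesssim \sum_{F\in\cE_K} A_F\,h_F^{-1}\,\|\jump{u_s^{\Delta}}\|_F^2,
\]
where the last step uses $A_F = \min(A_F^+,A_F^-) \le A_K$. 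Summing over $K\in\cT_h$ and noting that each facet appears a bounded number of times gives $\|A^{-1/2}\bsigma_s^{\Delta}\|_{\cT_h} \lesssim \tri u_s^{\Delta}\tri$, and an appeal to \cref{lem:effi-cg-1} then yields the desired global bound
\[
\|A^{-1/2}\bsigma_s^{\Delta}\|_{\cT_h} \lesssim \|A^{1/2}\nabla(u-u_k^{cg})\|_{\cT_h}.
\]

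The main subtlety lies in preserving robustness with respect to the jumps in $A$. For the averaging term, robustness follows from the weighted average construction together with \cref{weight1}, which ensures that the large coefficient is suppressed by the weights. For the correction term, robustness rests on the key observation $A_F = \min(A_F^\pm) \le A_K$ on each of the two elements sharing $F$, which keeps the ratio $A_F^2/A_K$ under control by $A_F$ uniformly. Note that, unlike \cref{lem:effi-nc-1}, the bound here is only global rather than elementwise; this reflects the global (or only partially localized) nature of the problem \cref{b-correction-A}, and is the reason \cref{lem:effi-cg-1} was stated in global form.
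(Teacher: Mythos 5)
Your proposal is correct and follows essentially the same route as the paper: a triangle-inequality split, the averaging part handled exactly as in \cref{lem:effi-nc-1} via \cref{bound-part1} and the classical bounds \cref{efficiency-ele-residual}--\cref{efficiency--jump}, and the correction part bounded through the facet degrees of freedom of $\mathbf{S}(u_s^{\Delta})$ with the key observation $A_F=\min(A_F^\pm)\le A_K$, followed by an appeal to \cref{lem:effi-cg-1}. Your closing remark on why the bound is only global (reflecting the global nature of \cref{b-correction-A}) is also consistent with the paper's statement.
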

\begin{proof}
Similar to the proof of \cref{lem:effi-nc-1}, it is sufficient to prove
	\begin{equation}\label{global-effi-cg-a}
	\|  A^{-1/2}\bsigma_{s}^{\Delta} \|_{\cT_{h}} \le
	C   \| A^{1/2} \nabla (u - u_{k}^{cg})\|_{\cT_{h}} + \mbox{osc}(f).
\end{equation}

By the definition of $\bsigma_{s}^{\Delta}$,  \cref{dg-}, we have for each $K \in \cT_{h}$,
\beq
\begin{split}
	\|A^{-1/2} \bsigma_{s}^{\Delta}\|_{K} &\lesssim 
	 \sum_{F \in \cE_{K}} h_{F}^{1/2}\| A_{K}^{-1/2} \bsigma_{s}^{\Delta} \cdot \bn_{F}\|_{F} 
	\le  \sum_{F \in \cE_{K}} h_{F}^{1/2}\| A_{K}^{-1/2}\mathbf{S}({u_{s}^{\Delta}}) \cdot \bn_{F}\|_{F}\\
	&\le  \sum_{F \in \cE_{K}} h_{F}^{-1/2}\| A_{K}^{-1/2} A_{F} \jump{u_{s}^{\Delta}}\|_{F}
	\le  \sum_{F \in \cE_{K}} h_{F}^{-1/2}\| A_{F}^{1/2} \jump{u_{s}^{\Delta}}\|_{F}.
\end{split}
\eeq
This, combining with \cref{global-efficiency}, indicates that
\beq
\begin{split}
	\|A^{-1/2} \bsigma_{s}^{\Delta}\|_{\cT_{h}} & \lesssim \tri u_{s}^{\Delta}\tri \le C \| A^{1/2} \nabla (u - u_{k}^{cg})\|_{\cT_{h}}.
\end{split}
\eeq
This completes the proof of the lemma.

\end{proof}

	\subsection{Local Efficiency of Flux Recovery using POU for Conforming FEM}
	In this subsection, we aim to prove the local efficiency of the error indicator defined in 
\cref{nc-local-indicator}, where $\bsigma_{s}^{\Delta}$ is the solution to \cref{-POU-a}. 
For simplicity, we analyze the case for two dimensions and $s = 0$. The efficiency results can be extended to other $s$ values in two dimensions under a similar fashion.

To take a closer look of the local systems, for any $z \in \cN$, let $\triangle_z =\{K_{i,z}\}_{i=1}^{T_z}$ be the set of all elements, oriented counterclockwise, sharing $z$ as a common vertex. $T_{z}$ denotes the number of triangle elements in $\triangle_{z}$. Let $F_{i,z}$ be the edge in $\cE_{z}$ shared by $K_{i-1,z}$ and $K_{i,z}$ for $i=2, \cdots, T_z$. Let $F_{1,z} = \cE_{K_{1,z}} \cap \cE_{z} \setminus \{F_{2,z}\}$. When $z \in \partial \Omega$, let $F_{T_{z}+1,z} =\cE_{K_{T_{z},z}} \cap \cE_{z} \setminus \{F_{T_{z},z}\}$. When $z \in \cN_{I}$,  let $F_{T_{z}+1,z} = F_{1,z}$. See \cref{fig:sub1}-\cref{fig:sub2} for an illustration.

\begin{figure}[htbp]
    \centering
    \begin{subfigure}{0.35\textwidth}
        \centering
        \includegraphics[width=\linewidth]{./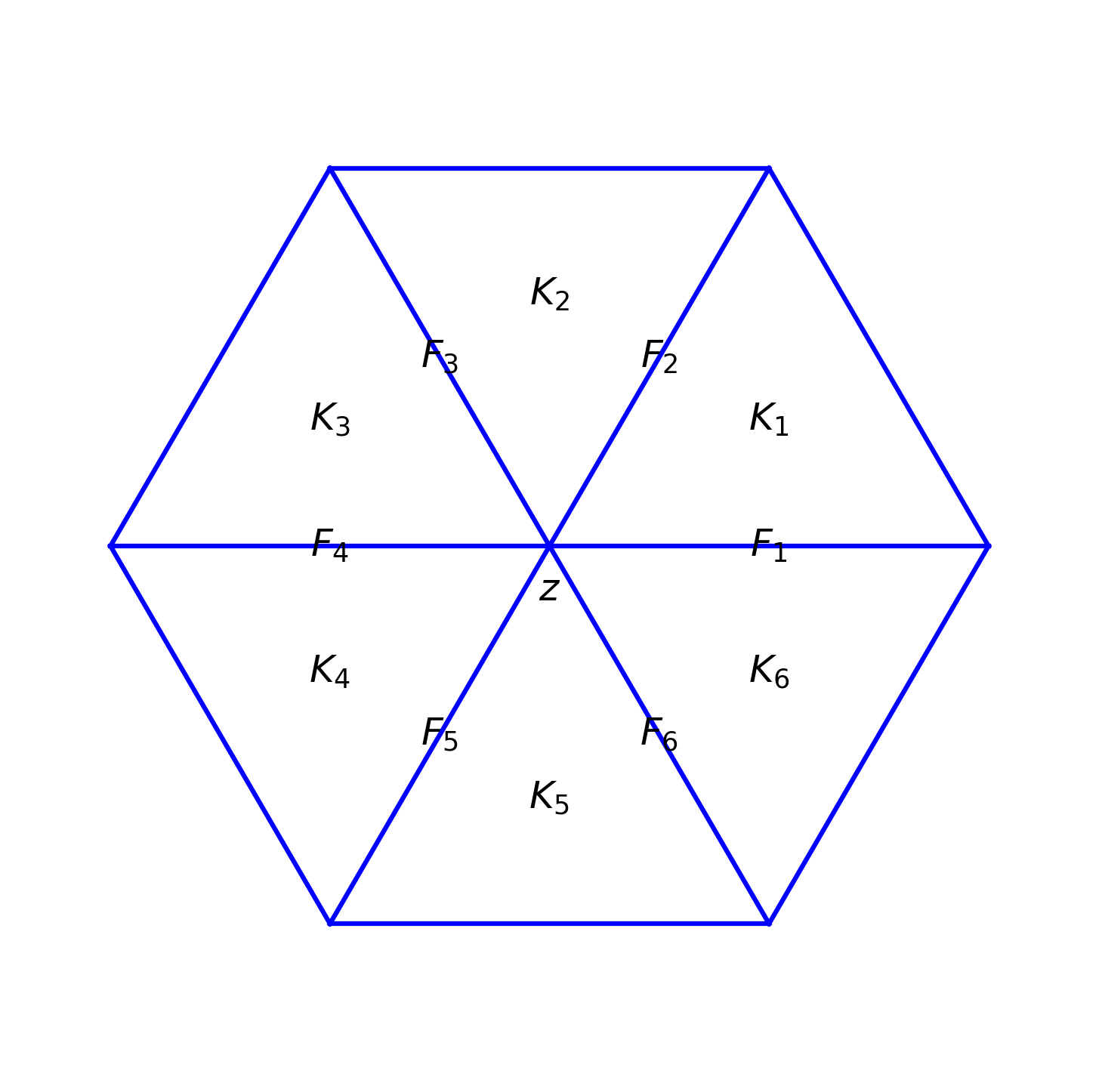}
        \caption{An interior vertex $z \in \cN_I$}
        \label{fig:sub1}
    \end{subfigure}
    \hfill
    \begin{subfigure}{0.35\textwidth}
        \centering
        \includegraphics[width=\linewidth]{./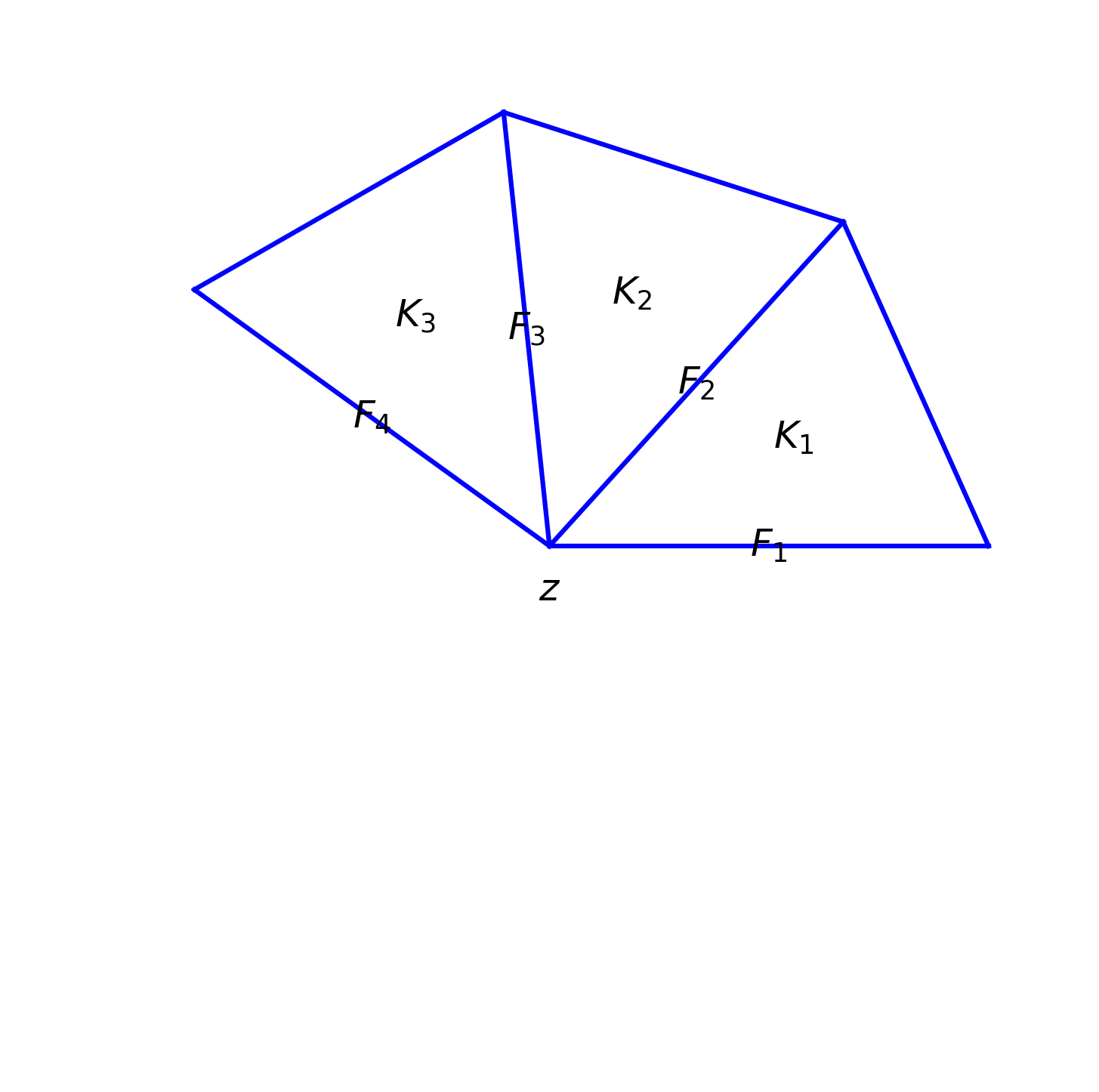}
        \caption{A boundary vertex $z \in \partial \O$}
        \label{fig:sub2}
    \end{subfigure}
    \label{fig:main} 
\end{figure}

Recall that  for any $z \in \cN$, we need to solve $\bsigma_z^{\Delta}$ such that
\[
\mathcal{A}( \bsigma_z^{\Delta}, v) =  r(v \lambda_{z}) \quad \forall v \in DG(\o_{z},0).
\]
The explicit formulas  for ${\bsigma_z^{\Delta}}$ are provided in \cref{appen-pouprogramming}.

\begin{lemma}\label{lem:monotone}
Assuming that for any $z \in \cN$, $\{K_{i,z}\}_{i=1}^{T_{z}}$ can be ordered in the way such that $\{A_{K_{i,z}}\}_{i=1}^{T_{z}}$ is monotone. Then for any $z \in \cN$ we have
\begin{equation}\label{estimate-x-z}
A_F^{-1/2} |x_{F,z}| \le \sum_{K \in \o_{z}} A_{K}^{-1/2} r_{K,z}, \forall F \in \cE_{z} \setminus \cE_{N}.
\end{equation}
The proof of this lemma is provided in \cref{appen:lemmproof}.
\end{lemma}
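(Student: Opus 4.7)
The plan is to exploit the explicit formulas for $\{x_{F_{i,z},z}\}_{i}$ developed in \cref{appen-pouprogramming}, which solve the local system generated by \cref{local-pro-z}. Testing \cref{local-pro-z} with $v = 1_{K_{j,z}}$ gives
\[
\mbox{sign}_{K_{j,z}}(F_{j,z})\, x_{F_{j,z},z} + \mbox{sign}_{K_{j,z}}(F_{j+1,z})\, x_{F_{j+1,z},z} = r_{K_{j,z},z},
\]
for each $j = 1, \dots, T_z$ (with the convention that terms involving $F \in \cE_N$ drop out, since $\bsigma_z^{\Delta}\cdot\bn_F = 0$ there). Because adjacent elements have opposite signs on their shared facet, the system is a discrete first-order recursion, and, whenever the vertex admits a unique solution (e.g.\ a boundary vertex with at least one Neumann facet in $\cE_z$), one obtains a one-sided telescoping representation
\[
x_{F_{i+1,z},z} = \pm \sum_{j=1}^{i} (\pm 1)\, r_{K_{j,z},z},
\]
which is exactly the formula given in the appendix.

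The next step is the key observation: assume without loss of generality that $A_{K_{1,z}} \le A_{K_{2,z}} \le \dots \le A_{K_{T_z,z}}$. Since $A_F = \min(A_{K_F^+}, A_{K_F^-})$ for interior facets, one has $A_{F_{i+1,z}} = A_{K_{i,z}}$ whenever $F_{i+1,z}$ is interior. Choosing the representation of $x_{F_{i+1,z},z}$ whose partial sum runs over the indices $j \le i$ (so that $A_{K_{j,z}} \le A_{K_{i,z}} = A_{F_{i+1,z}}$), one obtains
\[
A_{F_{i+1,z}}^{-1/2}\, |x_{F_{i+1,z},z}| \le A_{F_{i+1,z}}^{-1/2} \sum_{j=1}^{i} |r_{K_{j,z},z}| \le \sum_{j=1}^{i} A_{K_{j,z}}^{-1/2}\, |r_{K_{j,z},z}|,
\]
which yields the desired estimate \cref{estimate-x-z}. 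The boundary case with one Neumann facet is immediate, since the unique partial sum automatically runs from the Neumann end.

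The main obstacle is the non-unique case (interior vertices and Dirichlet--Dirichlet boundary vertices), where Lemma \ref{lem:local-unity-solutions} produces a one-dimensional kernel and $\bsigma_z^{\Delta,*}$ is selected by the minimization \cref{min-property}. Here $x_{F_{i+1,z},z}$ admits two natural representations: a forward sum over $j \le i$ and a backward sum over $j \ge i+1$, linked by the compatibility identity $\sum_{j=1}^{T_z} r_{K_{j,z},z} = r(\lambda_z) = 0$ derived from the conformity of $\lambda_z$. The monotonicity hypothesis is exactly what is needed to always route the partial sum through the ``low-$A$ side'': whenever the forward sum side has small coefficients (i.e.\ indices $j \le i$ corresponding to $A_{K_{j,z}} \le A_{F_{i+1,z}}$), we use it; otherwise we switch to the backward sum by adding the zero quantity $\sum_j r_{K_{j,z},z}$, and in that case monotonicity and the definition of $A_{F_{i+1,z}}$ again guarantee $A_{K_{j,z}}^{-1/2} \ge A_{F_{i+1,z}}^{-1/2}$ for all involved $j$. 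One then verifies that the minimizing $\bsigma_z^{\Delta,*}$ from \cref{min-property} satisfies the same bound (up to constants independent of the jump in $A$), since its norm is bounded by that of any valid representative and monotonicity preserves the weighted sum on the right-hand side.
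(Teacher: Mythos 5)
There is a genuine mismatch between what you bound and what the lemma asserts. In the paper, $x_{F,z}$ is not an arbitrary solution of the local system: it is the specific particular solution constructed in \cref{appen-pouprogramming} by solving $\tilde M_{|\cE_z|}\bx_z=\tilde{\mathbf r}_z$ with the coefficient-dependent last row $a_i=A_{F_{i,z}}^{-1}$, which for an interior vertex (or a Dirichlet--Dirichlet boundary vertex) yields the weighted-average formula \cref{x-solution} with the convex weights $\bar\Lambda_{i,z}/\Lambda_z$ and $\Lambda_{i,z}/\Lambda_z$. Your claim that the one-sided telescoping representation $x_{F_{i+1,z},z}=\pm\sum_{j\le i}(\pm1)\,r_{K_{j,z},z}$ is ``exactly the formula given in the appendix'' is only true for the Neumann boundary cases (Cases 2 and 3); in the interior and Dirichlet--Dirichlet cases --- which are precisely the hard ones --- the appendix formula is not a pure telescoped sum, and your forward-sum estimate therefore bounds a \emph{different} particular solution (one anchored at zero at the low-coefficient facet), which differs from the paper's $x_{F,z}$ by a multiple of the kernel vector $(1,\dots,1)^{T}$. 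The bound \cref{estimate-x-z} does not transfer across such a kernel shift, so your argument does not prove the lemma for the $x_{F,z}$ that is actually consumed in the efficiency proof of \cref{effi-cg-pou}. The paper's own proof works directly with the weights of \cref{x-solution}: using monotonicity to write $\bar\Lambda_{i,z}\le\sum_{k>i}A_{F_k}^{-1}$, it bounds each factor $\frac{A_{F_k}^{-1/2}A_{F_m}^{-1/2}}{A_{F_k}^{-1}+A_{F_m}^{-1}}\le\frac12$ via the mean inequality, together with $\Lambda_{i,z}/\Lambda_z\le1$; none of this appears in your argument.

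Your closing appeal to the minimizer \cref{min-property} does not repair this. First, \cref{estimate-x-z} is a facet-wise bound on the particular solution $\bsigma_z^{\Delta}$, not on $\bsigma_z^{\Delta,*}$; in \cref{effi-cg-pou} the kernel correction is handled separately through \cref{null-correction-bound}, so ``the minimizer's norm is bounded by any representative'' addresses a different quantity (an $A^{-1/2}$-weighted $L^2$ norm over $\omega_z$) and is never turned into the stated per-facet estimate. That said, your underlying idea is sound and could be salvaged: if you \emph{redefine} the particular solution in the appendix to be the telescoped one anchored at the facet with the smallest coefficient, then your argument ($A_{K_{j,z}}\le A_{F_{i+1,z}}$ for all $j\le i$, hence $A_{F_{i+1,z}}^{-1/2}|x_{F_{i+1,z},z}|\le\sum_{j\le i}A_{K_{j,z}}^{-1/2}|r_{K_{j,z},z}|$) is correct, robust in the jumps, and even simpler than the paper's mean-inequality computation; but then you must also state that choice explicitly and rewire \cref{appen-pouprogramming} and \cref{-cg-null-correction} accordingly. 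As written, the proof establishes the estimate for an object other than the one in the statement.
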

\begin{remark}
We note that the monotone assumption in \cref{lem:monotone} is slightly more restrictive than the quasi-monotone assumption.
\end{remark}


 \begin{lemma}\label{effi-cg-pou}
 The $\bsigma_{s}^{\Delta}$ defined in \cref{-POU-a} for $s=0$ in the two dimensions satisfies the following local efficiency result:
 \begin{equation}\label{local-efficiency-cg}
 \| A^{-1/2} \bsigma_{s}^{\Delta}\|_{K} \le C \|A^{1/2} \nabla (u - u_{k}^{cg})\|_{\o_{K}},
\end{equation}
 where the constant $C$ is independent of the mesh and the jump of the coefficient $A(x)$ and  $ u_{k}^{cg}$ is the conforming finite element solution to  \cref{CG-solution} for $k \ge 1$.
\end{lemma}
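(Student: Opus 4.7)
The plan is to chain three estimates: a localization step reducing the $K$-local flux norm to contributions from the vertex-patch fluxes $\bsigma_z^\Delta$, an application of \cref{estimate-x-z} converting the edge-moment bounds into local residuals $r_{K',z}$, and finally standard bubble-function efficiency estimates for these residuals.

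First, since $\bsigma_s^\Delta = \sum_{z \in \cN} \bsigma_z^\Delta$ and only those patches with $z \in \cN_K$ contribute on $K$, the triangle inequality gives
\[
\|A^{-1/2}\bsigma_s^\Delta\|_K \;\le\; \sum_{z \in \cN_K} \|A^{-1/2}\bsigma_z^\Delta\|_K.
\]
Since $\bsigma_z^\Delta \in \mathring{RT}_{0,\partial \omega_z}(\omega_z,0)$, on each $K \in \omega_z$ its normal components on the edges of $K$ fully determine it, and the standard RT(0) scaling yields
\[
\|\bsigma_z^\Delta\|_K \;\lesssim\; \sum_{F \in \cE_K \cap \cE_z} h_F^{1/2}\|\bsigma_z^\Delta \cdot \bn_F\|_F \;\lesssim\; \sum_{F \in \cE_K \cap \cE_z} |x_{F,z}|.
\]
Combined with $A_K^{-1/2} \le A_F^{-1/2}$ (which follows from $A_F = \min(A_F^-, A_F^+)$), this reduces the problem to controlling $A_F^{-1/2}|x_{F,z}|$.

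Next, I would invoke \cref{estimate-x-z} from \cref{lem:monotone} directly: under the monotone coefficient ordering within each patch,
\[
A_F^{-1/2}|x_{F,z}| \;\le\; \sum_{K' \in \omega_z} A_{K'}^{-1/2} r_{K',z}, \quad F \in \cE_z \setminus \cE_N,
\]
where $r_{K',z} = r(\lambda_z|_{K'}) = (f - \nabla\!\cdot\!\tilde{\bsigma}_{k-1}^{cg}, \lambda_z)_{K'}$. Using the definition of $\tilde{\bsigma}_{k-1}^{cg}$ and integration by parts on the element $K'$, exactly as in \cref{effi:cg-a}, one rewrites
\[
r_{K',z} = (f + \nabla\!\cdot\! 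A\nabla u_k^{cg},\lambda_z)_{K'} - \sum_{F \in \cE_{K'} \cap \cE_z \cap \cE_I}\bigl\langle \jump{A\nabla u_k^{cg}\cdot\bn_F},\,\{\lambda_z\}_F^w\bigr\rangle_F,
\]
modulo the appropriate halving due to $\lambda_z$ being continuous.

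Then I would apply the classical element and edge bubble function estimates \cref{efficiency-ele-residual} and \cref{efficiency--jump}, together with $\|\lambda_z\|_{\infty}\le 1$, $\|\lambda_z\|_{K'}\lesssim h_{K'}$, and the trace-scaling $\|\lambda_z\|_F \lesssim h_F^{1/2}$, to deduce
\[
A_{K'}^{-1/2} r_{K',z} \;\lesssim\; \|A^{1/2}\nabla(u - u_k^{cg})\|_{\omega_{K'}} + \mathrm{osc}(f),
\]
with the weighted-average structure ensuring that the constant does not depend on the coefficient jump across $F$. Summing over the finitely many $z \in \cN_K$ and $K' \in \omega_z$ yields the bound on the enlarged patch $\omega_K$.

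The main obstacle is the joint use of the monotone ordering assumption and \cref{estimate-x-z} to guarantee robustness: the key is that each \emph{positive} coefficient $A_{K'}^{-1/2}$ appearing on the right of \cref{estimate-x-z} can be paired with the same coefficient appearing naturally in the bubble-function efficiency bounds \cref{efficiency-ele-residual}, \cref{efficiency--jump}, so that no uncontrolled ratio $A_{K'}/A_K$ ever appears. A secondary technicality is the treatment of boundary vertices, particularly those on $\Gamma_N$ and those shared by two Dirichlet edges (for which $\bsigma_z^\Delta$ was defined via the minimization \cref{min-property}); for these cases one must verify that the representation of $x_{F,z}$ by local residuals established in \cref{lem:monotone} still applies, which follows from the case analysis already carried out in \cref{lem:local-unity-solutions}.
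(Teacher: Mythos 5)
Your proposal takes essentially the same route as the paper's proof: decompose $\bsigma_s^{\Delta}$ over the vertex patches touching $K$, use the $RT(\cT_h,0)$ scaling to reduce $\|A^{-1/2}\bsigma_z^{\Delta}\|_K$ to $A_F^{-1/2}|x_{F,z}|$, invoke \cref{lem:monotone}, and then bound $A_{K}^{-1/2}r_{K,z}$ through the integration-by-parts residual identity and the classical efficiency estimates \cref{efficiency-ele-residual} and \cref{efficiency--jump}, with the weighted averages supplying jump robustness. The only notable difference is that the paper's proof also controls the null-space corrections $\lambda_z\bsigma_z^{\#}$ (used for $z\in\cN^{*}$, i.e.\ interior and Dirichlet--Dirichlet vertices, not Neumann ones) via \cref{null-correction-bound}, a point your sketch leaves implicit and slightly misattributes, but this does not change the substance of the argument.
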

\begin{proof}
From \cref{-cg-null-correction}, we first observe that 
	\begin{equation}
	\begin{split}
		 \| A^{-1/2} \bsigma_{s}^{\Delta}\|_{K}^{2} \lesssim
		 \sum_{z \in \cN_{K} \cap \cN^{*}}  \| A^{-1/2}(\bsigma_{z}^{\Delta} -  \lambda_{z} \bsigma_z^{\#}))\|_{K}^{2}
		 + \sum_{z \in \cN_{K} \setminus \cN^{*}}  \| A^{-1/2}\bsigma_{z}^{\Delta}\|_{K}^{2}.
	\end{split}
	\end{equation}
	In the case when $z \in \cN^{*}$, applying the facts that
	$\|A^{-1/2}\nabla \bsigma_{z}^{\#}\|_{\o_{z}}=1$ and  $ \lambda_{z} ={(A^{-1}  \bsigma_z^{\Delta},  \bsigma_z^{\#})_{\o_{z}}}$,  and the Cauchy Schwartz inequality yields	
		\begin{equation}\label{null-correction-bound}
	\begin{split}
		& \| A^{-1/2}(\bsigma_{z}^{\Delta} -  \lambda_{z} \bsigma_z^{\#}))\|_{K}
		 \le \| A^{-1/2}\bsigma_{z}^{\Delta} \|_{K} +|\lambda_{z}|
		  \le 2\| A^{-1/2}  \bsigma_z^{\Delta} \|_{\o_{z}}.
	\end{split}
	\end{equation}
	It is, therefore sufficient to prove that 
	 \begin{equation}\label{local-efficiency-cg-a}
 \| A^{-1/2} \bsigma_{z}^{\Delta}\|_{K}\lesssim \|A^{1/2} \nabla e\|_{\o_{z}} \quad \forall z \in \cN_{K}.
\end{equation}
We prove the case when $z \in \cN_{I}$. For the other cases, the proof can be done similarly.
First, by the equivalence of norms, $A_{F} = \min (A_{F}^{+}, A_{F}^{-})$, and \cref{estimate-x-z} we have that
\begin{equation}\label{7-53}
\begin{split}
\| A^{-1/2} \bsigma_{z}^{\Delta} \|_K 
\lesssim \sum_{F \in \cE_{z} \cap \cE_{K}} \dfrac{h_F} { \sqrt{A_K}} | \bsigma_{z}^{\Delta}  \cdot \bn_{F}| \le 
 \sum_{F \in \cE_{z} \cap \cE_{K}} A_F^{-1/2} |x_{F,z}|
 \lesssim
  \sum_{K \in \o_{z}} A_{K}^{-1/2} r_{K,z}.
\end{split}
\end{equation}
From integration by parts, \cref{weight1},  \cref{jump-id}, and Cauchy Schwartz inequality, we have
for any $K \in \o_{z}$,
	\begin{align}\label{effi:cg-0}
& A_{K}^{-1/2}|r(\lambda_{z} 1_{K})|
=  A_{K}^{-1/2}\left|(f - \nabla \cdot  \tilde \bsigma_{k-1}^{cg}, \lambda_{z})_{K}\right|\\
&=  A_{K}^{-1/2}
\left|(f,  \lambda_{z})_{K} +  \left(\tilde \bsigma_{k-1}^{cg},  \nabla \lambda_{z})_{K} 
-
<\tilde \bsigma_{k-1}^{cg} \cdot \bn_{K},  \lambda_{z} 1_{K} >_{\partial K} \right) \right|
 \notag\\
&=  A_{K}^{-1/2}
\left|(f,  \lambda_{z})_{K} 
	-  ( A \nabla u_{k}^{cg},  \nabla \lambda_{z})_{K} 
+
\sum_{F \in \cE_{K} \cap \cE_{z}}<\{ A \nabla u_{k}^{cg} \cdot \bn_{F} \}_{w}^{F},  \jump{\lambda_{z} 1_{K}} >_{F} \right|
 \notag\\
&\le   \left|A_{K}^{-1/2}(f + \nabla \cdot A \nabla u_{k}^{cg},  \lambda_{z})_{K} \right|
+
\sum_{F \in\cE_{K} \cap \cE_{z}} A_{F,max}^{-1/2}\left|< (\jump{ A \nabla u_{k}^{cg} \cdot \bn_{F}} ,  
\jump{ \lambda_{z}1_{K}}) >_{F} \right|
 \notag\\
&\lesssim 
{A_{K}^{-1/2} h_{K}\| f + \nabla  \cdot A \nabla u_{k}^{cg}\|_K }
 +
{\sum_{F \in\cE_{K} \cap \cE_{z}} h_{F}^{1/2} A_{F,max}^{-1/2}\| \jump{ A \nabla u_{k}^{nc} \cdot \bn_{F}} \|_{F}}, \notag
\end{align}
where we have used the facts that
\[
	\| \lambda_{z}\|_K \lesssim h_{K} , \quad   \|\lambda_{z}\|_{F} \lesssim h_{F}^{1/2}.
\]
Finally, combing \cref{7-53}, \cref{effi:cg-0} with the classical local efficiency results in \cref{efficiency-ele-residual} and \cref{efficiency--jump} yields \cref{local-efficiency-cg-a}, and, hence, \cref{local-efficiency-cg}. This completes the proof of the lemma.
\end{proof}

\begin{remark}
From \cref{null-correction-bound}, we observe that adding the correction \( \bsigma_{z}^{\#} \) does not impact the lower efficiency bound. Since the upper reliability bound holds automatically, computing this term in practice is not strictly necessary. However, incorporating such corrections will likely result in a more accurate efficiency constant.

\end{remark}

\begin{lemma}\label{lem:effi-cg-4}
Define the local error indicator
\begin{equation} \label{cg-local-indicator-2}
	 \eta_{\sigma,K} = \|A^{-1/2} \hat \bsigma_h + A^{1/2} \nabla u_k^{cg} \|_K 
	\end{equation}
	where $ \hat\bsigma_{h}$ is defined in \cref{lem:sigma-hat-cg2}.
	Then we have the following local efficiency bound:
	\begin{equation}\label{global-effi-cg-2}
	 \eta_{\sigma,K} \le
	 C  \| A^{1/2} \nabla (u - u_{k}^{cg})\|_{\o_{K}} + \mbox{osc}(f),
\end{equation}
where the constant $C$ is independent of the mesh and the jump of the coefficient $A(x)$.
\end{lemma}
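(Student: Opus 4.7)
The plan is to mirror the template already used in the proofs of \cref{lem:effi-nc-1} and \cref{lem:effi-cg-2}, and then invoke \cref{effi-cg-pou} to handle the POU correction term. First, I would decompose the error indicator by the triangle inequality:
\[
\eta_{\sigma,K} \le \|A^{-1/2}\bsigma_s^{\Delta,*}\|_K + \|A^{-1/2}\tilde\bsigma_{k-1}^{cg}+A^{1/2}\nabla u_k^{cg}\|_K.
\]
The second term is an averaging-flux mismatch term, and it is handled exactly as in \cref{bound-part1} of \cref{lem:effi-nc-1}: using the equivalence of norms on $RT(K,k-1)$, the facet definition \cref{rt:1:aa}, and the weight bounds \cref{weight1}, it is controlled by $\sum_{F\in\cE_K} A_{F,\max}^{-1/2}\|\jump{A\nabla u_k^{cg}}\cdot\bn_F\|_F$ (with Neumann facets contributing a boundary oscillation).

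Next, the core of the argument is the first term $\|A^{-1/2}\bsigma_s^{\Delta,*}\|_K$. Here the point is that \cref{lem:sigma-hat-cg2} uses precisely the partial-POU correction $\bsigma_s^{\Delta,*}=\sum_{z}(\bsigma_z^{\Delta}-\lambda_z\bsigma_z^{\#})$ analyzed in \cref{effi-cg-pou}. That lemma already supplies the estimate $\|A^{-1/2}\bsigma_s^{\Delta,*}\|_K\lesssim \|A^{1/2}\nabla(u-u_k^{cg})\|_{\omega_K}$ for $s=0$, via the star-patch identity \cref{null-correction-bound} and the pointwise bound on the facet moments in \cref{estimate-x-z}. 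For general $s\le k-1$ in two dimensions, the same pattern applies: on each star patch one writes $A_K^{-1/2}|r(\lambda_z v)|$ as in \cref{effi:cg-0}, using integration by parts against $\lambda_z v\in H^1_0(\omega_z)$ to produce the element residual $f+\gradt A\nabla u_k^{cg}$ plus a weighted jump term on $\cE_z$, and then bounds each by the classical local efficiency estimates \cref{efficiency-ele-residual} and \cref{efficiency--jump}.

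Finally, assembling the two parts yields
\[
\eta_{\sigma,K}\lesssim \sum_{K'\in\omega_K} h_{K'}\|A^{-1/2}(f+\gradt A\nabla u_k^{cg})\|_{K'} + \sum_{F\in\cE_K\cup\cE_{\omega_K}} A_{F,\max}^{-1/2}\|\jump{A\nabla u_k^{cg}\cdot\bn_F}\|_F,
\]
which, after one final application of \cref{efficiency-ele-residual} and \cref{efficiency--jump}, gives the asserted bound \cref{global-effi-cg-2}.

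The main obstacle I anticipate is the step that controls $\|A^{-1/2}\bsigma_z^{\Delta}\|_{\omega_z}$ robustly with respect to the jumps of $A$. This is exactly where the quasi-monotone (here, monotone) coefficient distribution becomes essential, because the explicit solution formulas for the star-patch problem in \cref{appen-pouprogramming} have to be combined with the inequality \cref{estimate-x-z}, which was proved in \cref{lem:monotone} under that ordering assumption. Once that robustness on each patch is granted, the remainder of the argument is a routine replication of \cref{lem:effi-cg-2}, so noting ``the proof is similar'' and pointing to the key ingredients (\cref{lem:sigma-hat-cg2}, \cref{effi-cg-pou}, plus the averaging-flux bound from \cref{lem:effi-nc-1}) should suffice.
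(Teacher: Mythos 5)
Your proposal matches the paper's argument: the paper's proof is a one-line citation of the triangle inequality, the averaging-flux bound \cref{bound-part1}, the POU correction bound \cref{effi-cg-pou}, and the jump efficiency estimate \cref{efficiency--jump}, which is precisely the decomposition and chain of lemmas you lay out. Your additional remarks on extending to general $s$ and on the role of the monotonicity assumption via \cref{lem:monotone} are consistent with (and slightly more detailed than) what the paper records.
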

\begin{proof}
	\cref{global-effi-cg-2} is a direct consequence of the triangle inequality, \cref{bound-part1}, \cref{effi-cg-pou} and the efficiency result \cref{efficiency--jump}.
\end{proof}

\begin{remark}

In three dimensions, the null space of \cref{local-pro-z} has a dimension greater than \(1\). Moreover, since a counterclockwise ordering is not predefined, each vertex may require a separate local matrix.
\end{remark}

\bibliographystyle{siamplain}
\bibliography{references}

\appendix
\section{Programming the POU approach in 2D}\label{appen-pouprogramming}
We analyze the case for two dimensions and $s = 0$.
Recall that $\bsigma_z^{\Delta}$ satisfies that
	\begin{equation}
\mathcal{A}( \bsigma_z^{\Delta}, v) =  r(v \lambda_{z}) \quad \forall v \in DG(\o_{z},0).
\end{equation}
Test $v = 1_{K}$ for any $K \subset \o_{z}$ yields
 	\begin{equation}\label{local-equation}
\sum_{F \in \cE_{K} \cap \cE_{z} \setminus \cE_{N} } |F| \bsigma_{z}^{\Delta} \cdot \bn_{F} \mbox{sign}_{K}(F) = r(1_{K} \lambda_{z}).
\end{equation}
Define $|F| \bsigma_{z}^{\Delta} \cdot \bn_{F} \mbox{sign}_{z}(F) = x_{F,z}$ and $ r(1_{K} \lambda_{z}) := r_{K,z}$,
where $\mbox{sign}_{z}(F)$ is a sign function defined on $\cE_{z}$ such that
 $\mbox{sign}_{z}(F) =1 (-1)$ if $\bn_{F}$ is oriented clockwise (counterclockwise) with respect to $z$. 
 
%
%

 By definitions,  it's easy to verify that for all $K_{i,z}, i=1, \cdots, T_{z}$:
\begin{equation}\label{sign-property-a}
	\mbox{sign}_{z}(F_{i,z}) \mbox{sign}_{K_{i,z}}(F_{i,z})=1 \;
	\mbox{and} \;
	\mbox{sign}_{z}(F_{i+1,z}) \mbox{sign}_{K_{i,z}}(F_{i+1,z})=-1. 
\end{equation}

Applying \cref{sign-property-a}, we can rewrite \cref{local-equation} as
\beq\label{local-equatioT_z-1}
	x_{i,z}  - x_{i+1,z} = r_{i,z},  i=1, \cdots, T_{z},
\eeq
where $x_{i,z} = x_{F_{i},z}$ and $ r_{i,z} =  r_{K_{i},z}$.
 
 Define
 \[
M_{s} = \begin{bmatrix}
1 & -1 & 0 & 0 & \cdots & 0 \\
0 & 1 & -1 & 0 & \cdots & 0 \\
\vdots & \vdots & \vdots & \ddots & \ddots & \vdots \\
0 & 0 & 0 & \cdots & 1 & -1 \\
-1 & 0 & 0 & \cdots & 0 & 1 \\
\end{bmatrix}_{s \times s}
\tilde M_s\!\! = \begin{bmatrix}
1 & -1 & 0 & 0 & \cdots & 0 \\
0 & 1 & -1 & 0 & \cdots & 0 \\
\vdots & \vdots & \vdots & \ddots & \ddots & \vdots \\
0 & 0 & 0 & \cdots & 1 & -1 \\
a_{1} & a_{2} & a_{3} & \cdots & a_{s-1} & a_{s} \\
\end{bmatrix}_{{s \times s}}
\]
and
\[
\bx_{z} = \begin{pmatrix}
x_{1,z} \\
x_{2,z} \\
\vdots \\
x_{|\cE_{z}|-1,z} \\
x_{|\cE_{z}|,z} \\
\end{pmatrix},
\quad
\mathbf{r}_{z} = \begin{pmatrix}
r_{1,z} \\
r_{2,z} \\
\vdots \\
r_{|\cE_{z}|-1,z} \\
-\sum_{i=1}^{|\cE_{z}|-1}r_{i,z} \\
\end{pmatrix},
\quad
\tilde{\mathbf{r}}_{z} = 
\begin{pmatrix}
r_{1,z} \\
r_{2,z} \\
\vdots \\
r_{|\cE_{z}|-1,z} \\
0 \\
\end{pmatrix},
\]
where $|\cE_{z}|$ is the number of edges in $\cE_{z}$. Note that $|\cE_{z}| = T_{z}$ if $z$ is an interior vertex and $|\cE_{z}| = T_{z}+1$ if $z$ is a boundary vertex.
Note that the matrix $M_s$ does not depend on the mesh. Denote by $M_{s',s} \, (s' \leq s)$ the matrix sharing the first $s'$ rows of $M_s$. For the last row of $\tilde M_{s}$, we choose the values of $a_{i} >0$ depending on $A$ to ensure a unique solution and robustness with respect to $A$.
We analyze the the solution of \cref{local-equatioT_z-1} in three scenarios.

Case 1: $z$ is an interior vertex or  $z \subset \partial \O$ and $z$  shared by two Dirichlet boundaries. \cref{local-equatioT_z-1} is equivalent to solve
\begin{equation}
	M_{|\cE_{z}|} \bx_{z} =\mathbf{r}_{z}.
	\end{equation}
	Note that in the second case, where \( z \) is shared by two Dirichlet boundaries, testing \( K \in \cT_{z} \) provides only \( |\cE_{z}| - 1 \) equations. However, we can still use the same linear system, as the last equation in \( M_{|\cE_{z}|} \bx_{z} = \mathbf{r}_{z} \) is linearly dependent and does not affect the solution.

Since the above system has a one-dimensional kernel, we will solve for a specific solution that satisfies
\[
	 \bx_{z} =  \tilde M_{|\cE_{z}|}^{-1} \,\tilde{\mathbf{r}}_{z} .
\]
Note that $ \tilde M_{|\cE_{z}|}$ is non-singular and thus has a unique solution.
In particular, the solution has the following form:
\begin{equation}\label{x-solution}
	x_{1,z} = \sum_{i=1}^{\cE_z-1} \dfrac{ \bar \Lambda_{i,z}}{\Lambda_z} r_{i,z}, \quad
	x_{\cE_z,z} =- \sum_{i=1}^{\cE_z-1} \dfrac{\Lambda_{i,z}}{\Lambda_z} r_{i,z}, \quad
	x_{m,z} = \sum_{i=m}^{\cE_z-1} \dfrac{ \bar \Lambda_{i,z}}{\Lambda_z} r_{i,z}
	- \sum_{i=1}^{m-1} \dfrac{\Lambda_{i,z}}{\Lambda_z} r_{i,z},
	\end{equation}
where $\Lambda_{z} =\displaystyle \sum_{i=1}^{\cE_z} a_{i}$ and $\Lambda_{i,z} = \displaystyle\sum_{k=1}^{i} a_{i}$ for $i =1, \cdots, \cE_z -1$ and $\bar \Lambda_{i,z} = \Lambda - \Lambda_{i,z}$

Case 2. $z$ is a boundary vertex and $z$ is shared by two Neumann boundaries.  The vector $\bx_z$ can be uniquely obtained as
\[
 \bx_z[2: \cE_{z}-1] = B_z^{-1} \, \mathbf{r}_z[1:\cE_z-2],
\]
where $B_z$ is the matrix obtained by removing the first and last columns and the last two rows of $M_{\cE_{z} \times \cE_{z}}$. Note that we remove the first and last column since $\bsigma_{z}^{\Delta}=0$ on $\cE_{N}$, i.e.,  $\bx_z[1] =  \bx_z[\cE_{z}] =0$. 

Case 3. Finally, consider the case where $z$ is a boundary vertex, and $z$ is shared by one Neumann boundary and one Dirichlet boundary. The vector $\bx_z$ can be uniquely obtained as
\[
\bx_z[2: \cE_{z}] = C_z^{-1} \mathbf{r}_z, \quad \mbox{or} \quad
\bx_z[1: \cE_{z}-1] = C_z^{-1} \mathbf{r}_z,
\]
where $C_z$ is the matrix obtained by removing  the first or the last column of $M_{T_z \times (T_z+1)}$ depending on the location of the Riemann boundary. More precisely, the first column of $M_{T_z \times (T_z+1)}$ is removed if the Riemann boundary coincides with $F_{1,z}$. Otherwise, the last column of $M_{T_z \times (T_z+1)}$ is removed.

From here to thereafter, for each $z \in \cN$, we let
$a_{i} = A_{F_{i,z}}^{-1}$ for $i=1, \cdots, |\cE_{z}|$ in the last row of $\tilde M_s$

\begin{remark}
From above, we can see that the recovered flux can be explicitly defined in two dimensions.
The efficiency results for recovered flux in higher-order RT spaces can be proved similarly. 
\end{remark}

\section{Proof of \cref{lem:monotone}} \label{appen:lemmproof}
\begin{proof}
We prove the case when $z \in \cN_{I}$. The other cases can be proved similarly.
Without loss of generality, we assume that $\{A_{K_{i,z}}\}_{i=1}^{T_{z}}$ is increasing. Therefore, 
$\{A_{F_{i,z}}\}_{i=1}^{T_{z}}$ is also increasing.

From \cref{x-solution}, the triangle inequality, the fact that  $\{A_{K_{i,z}}\}_{i=1}^{T_{z}}$ is increasing, and the mean inequality, i.e., $ 2ab \le a^{2} + b^{2}$, we can bound $A_{F_{1},z}^{-1/2} |x_{1,z}| $ as follows:
\begin{equation}
\begin{split}
A_{F_{1},z}^{-1/2} |x_{1,z}| &\le A_{F_{1},z}^{-1/2} 
\left|\sum_{i=1}^{T_{z}-1} \dfrac{ \bar \Lambda_{i,z}}{\Lambda_z} r_{i,z}
 \right|=
 \left|\sum_{i=1}^{T_{z}-1} \dfrac{ \bar\Lambda_{i,z} A_{F_{1},z}^{-1/2} A_{K_{i,z}}^{1/2}}{\Lambda_z} A_{K_{i,z}}^{-1/2} r_{i,z}
 \right| \\
 &= 
 \left|\sum_{i=1}^{T_{z}-1}  \sum_{k=i+1}^{T_z-1}\dfrac{ A_{F_{k},z}^{-1}
 A_{F_{1},z}^{-1/2} A_{K_{i,z}}^{1/2}}{\Lambda_z} A_{K_{i,z}}^{-1/2} r_{i,z}
 \right| \\
  &\le
\sum_{i=1}^{T_{z}-1}  \sum_{k=i+1}^{T_z-1} \left|\dfrac{ A_{F_{k},z}^{-1/2}
 A_{F_{1},z}^{-1/2} }{\Lambda_z}  A_{K_{i,z}}^{-1/2}r_{i,z}
 \right|  \le
\sum_{i=1}^{T_{z}-1}  \sum_{k=i+1}^{T_z-1}   \left|\dfrac{ A_{F_{k},z}^{-1/2}
 A_{F_{1},z}^{-1/2} }{ A_{F_{k},z}^{-1} +
 A_{F_{1},z}^{-1}}  A_{K_{i,z}}^{-1/2}r_{i,z}  \right| \\
 & \le
  \sum_{i=1}^{T_{z}-1}  \sum_{k=i+1}^{T_z-1} \dfrac{1}{2} \left|A_{K_{i,z}}^{-1/2} r_{i,z}  \right| 
   \lesssim
\sum_{i=1}^{T_{z}-1}     \left|A_{K_{i,z}}^{-1/2} r_{i,z}  \right|.
\end{split}
\end{equation}
To estimate $A_{F_{T_{z}},z}^{-1/2} |x_{T_{z},z}|$, we use the fact $\{A_{F_{k},z}\}_{k=1}^{T_{z}}$ is increasing and $\dfrac{ \Lambda_{i,z}}{\Lambda_z} \le 1$
\begin{equation}
\begin{split}
A_{F_{T_{z}},z}^{-1/2} |x_{T_{z},z}| &\le A_{F_{T_{z}},z}^{-1/2} 
\left|\sum_{i=1}^{T_{z}-1} \dfrac{ \Lambda_{i,z}}{\Lambda_z} r_{i,z}
 \right|
 \le 
 \left|\sum_{i=1}^{T_{z}-1} A_{F_{T_{z}},z}^{-1/2} A_{K_{i,z}}^{1/2}  A_{K_{i,z}}^{-1/2}   r_{i,z}
 \right| \\
 & \le
  \sum_{i=1}^{T_{z}-1}   \left|A_{K_{i,z}}^{-1/2} r_{i,z}  \right|.
\end{split}
\end{equation}
Finally, to estimate $A_{F_{m},z}^{-1/2} |x_{m,z}|$ for $2 \le m \le T_{z}-1$, using similar arguments as above yields,
\begin{equation}
\begin{split}
A_{F_{m},z}^{-1/2} |x_{m,z}| &\le A_{F_{m},z}^{-1/2} 
\left|\sum_{i=m}^{T_{z}-1} \dfrac{ \bar \Lambda_{i,z}}{\Lambda_z} r_{i,z}
	- \sum_{i=1}^{m-1} \dfrac{\Lambda_{i,z}}{\Lambda_z} r_{i,z}  \right| \\
	 &\le 
 \left|\sum_{i=m}^{T_{z}-1}  \sum_{k=i+1}^{T_z-1}\dfrac{ A_{F_{k},z}^{-1}
 A_{F_{m},z}^{-1/2} A_{K_{i,z}}^{1/2}}{\Lambda_z} A_{K_{i,z}}^{-1/2} r_{i,z}
 \right| +
 \left|\sum_{i=1}^{m-1} A_{F_m}^{-1/2} A_{K_{i,z}}^{1/2}  A_{K_{i,z}}^{-1/2}   r_{i,z}
 \right|\\
 	 &\le 
 \left|\sum_{i=m}^{T_{z}-1}  \sum_{k=i+1}^{T_z-1}\dfrac{ A_{F_{k},z}^{-1/2}
 A_{F_{m},z}^{-1/2} }{A_{F_{k},z}^{-1}+ A_{F_{m},z}^{-1} } A_{K_{i,z}}^{-1/2} r_{i,z}
 \right| +
 \left|\sum_{i=1}^{m-1}   A_{K_{i,z}}^{-1/2}   r_{i,z}
 \right|\\
  	 &\le 
 \left|\sum_{i=m}^{T_{z}-1}  \sum_{k=i+1}^{T_z-1} A_{K_{i,z}}^{-1/2} r_{i,z}
 \right| +
 \left|\sum_{i=1}^{m-1}  A_{K_{i,z}}^{-1/2}   r_{i,z}
 \right|
   \lesssim
  \sum_{i=1}^{T_{z}-1}   \left|A_{K_{i,z}}^{-1/2} r_{i,z}  \right|.
\end{split}
\end{equation}
This completes the proof of the lemma.
\end{proof}

\end{document}